\selectfont\symbol{60}\fontencoding{\encodingdefault}}
\selectfont\symbol{62}\fontencoding{\encodingdefault}}
\newcommand{\assign}{:=}
\newcommand{\cdummy}{\cdot}
\newcommand{\mathD}{\mathrm{D}}
\newcommand{\mathd}{\mathrm{d}}
\newcommand{\nocomma}{}
\newcommand{\nospace}{}
\newcommand{\nosymbol}{}
\newcommand{\precprec}{\prec\!\!\!\prec}
\newcommand{\tmdummy}{$\mbox{}$}
\newcommand{\tmop}[1]{\ensuremath{\operatorname{#1}}}
\newcommand{\tmscript}[1]{\text{\scriptsize{$#1$}}}
\newcommand{\tmtextit}[1]{{\itshape{#1}}}
\newenvironment{enumeratenumeric}{\begin{enumerate}[1.] }{\end{enumerate}}
\newenvironment{itemizedot}{\begin{itemize} }{\end{itemize}}
\newtheorem{theorem}{Theorem}
\newtheorem{lemma}[theorem]{Lemma}
\theoremstyle{remark}
\newtheorem{remark}[theorem]{Remark}
\theoremstyle{definition}
\newtheorem{definition}[theorem]{Definition}
\newcommand{\CC}{\mathscr{C} \hspace{.1em}}
\newcommand{\CD}{\mathscr{D} \hspace{.1em}}
\newcommand{\CF}{\mathscr{F} \hspace{.1em}}
\newcommand{\CB}{\mathscr{B} \hspace{.1em}}
\newcommand{\CA}{\mathscr{A} \hspace{.1em}}
\newcommand{\DD}{\mathscr{D} \hspace{.1em}}
\newcommand{\LL}{\mathscr{L} \hspace{.2em}}
\newcommand{\PP}{\mathscr{P} \hspace{.1em}}
\newcommand{\DF}{\mathfrak{D}}
\begin{document}

\title{Paracontrolled quasilinear SPDEs}

\author{M.~Furlan}
\address{CEREMADE\\
PSL-Universit{\'e} Paris Dauphine, France}
\email{furlan@ceremade.dauphine.fr}

\author{ M.~Gubinelli}
\address{IAM \& HCM\\
Universit{\"a}t Bonn, Germany}
\email{gubinelli@iam.uni-bonn.de}

\begin{abstract}
  We introduce a non-linear paracontrolled calculus and use it to renormalise
  a class of singular SPDEs including certain quasilinear variants of the
  periodic two dimensional parabolic Anderson model.
\end{abstract}

{\maketitle}

\section{Introduction}

We show how to renormalise a class of general quasilinear equations of which
one of the simplest examples is the following parabolic SPDE:
\begin{equation}
  \partial_t u (t, x) - a (u (t, x)) \Delta u (t, x) = \xi (x), \qquad u (0,
  x) = u_0 (x), \quad x \in \mathbbm{T}^2, t \geqslant 0,
  \label{eq:ur-formulation}
\end{equation}
with $a : \mathbbm{R} \rightarrow [\lambda, 1]$ for~$\lambda > 0$ a uniformly
bounded $C^3$ diffusion coefficient, and $\| a^{(k)} \|_{L^{\infty}} \leqslant
1$ for $k = 0, \ldots, 3$. We assume that $\xi \in \CC \nospace^{\alpha - 2}
(\mathbbm{T}^2)$ with $2 / 3 < \alpha < 1$ where $\CC^{\alpha}
(\mathbbm{T}^2)$ is the Besov space $B^{\alpha}_{\infty, \infty}
(\mathbbm{T}^2)$. This would apply for example to the space white noise on
$\mathbbm{T}^2$. In this case we only expect that $u (t, \cdot) \in
\CC^{\alpha} (\mathbbm{T}^2)$ and the term $a (u (t, \cdot)) \Delta u (t,
\cdot)$ is not well defined when $2 \alpha - 2 < 0$.
Eq.~(\ref{eq:ur-formulation}) is a quasilinear generalisation of the
two--dimensional periodic parabolic Anderson model (PAM).

Let us remark from the beginning that the framework developed in this paper
allows to deal with a class of equations of the form
\begin{equation}
  a_1 (u (t, x)) \partial_t u (t, x) - a_2 (u (t, x)) \Delta u (t, x) = \xi
  (a_3 (u (t, x)), t, x), \qquad x \in \mathbbm{T}^2, t \geqslant 0,
  \label{eq:general-model}
\end{equation}
where $a_1, a_2$ are sufficiently smooth non-degenerate coefficients and $\xi
(z, t, x)$ is a Gaussian process with covariance
\[ \mathbbm{E} [\xi (z, t, x) \xi (z', t', x')] = F (z, z') Q (t - t', x -
   x'), \qquad x, x' \in \mathbbm{T}^2, t, t', z, z' \in \mathbbm{R}, \]
with $F$ a smooth function and $Q$ a distribution of parabolic regularity
$\rho > - 4 / 3$. This includes as a special case the space white noise
discussed before, but we could consider a time white noise with a regular
dependence on the space variable, or some noise which is mildly irregular in
space and time.

Moreover, the scalar character of the equation or of the non-linear diffusion
coefficient will not play any specific role and we could consider
vector--valued equations with general diffusion coefficients, provided the
template problem~(\ref{eq:template}) below is uniformly parabolic.

For the sake of clarity and simplicity we will discuss mainly the basic
example~(\ref{eq:ur-formulation}) since this contains already most of the
technical difficulties. The fact that one can handle models as general
as~(\ref{eq:general-model}) can be seen as a direct byproduct of the
techniques we will introduce below. Let us state one simple result that can be
obtained via the theory developed in this paper:

\begin{theorem}
  Fix $2 / 3 < \alpha < 1$. Let $\xi \in \CC^{\alpha - 2} (\mathbbm{T}^2)$ be
  a space white noise with zero average on the torus, $u_0 \in \CC^{\alpha}$
  an initial condition and $a : \mathbbm{R} \rightarrow [\lambda, 1]$ for some
  $\lambda > 0$, $a \in C^3 (\mathbbm{R})$ and $\| a^{(k)} \|_{L^{\infty}}
  \leqslant 1$ $\forall k \in 0, \ldots, 3$. Let $(\xi_{\varepsilon}, u_{0,
  \varepsilon})_{\varepsilon > 0}$ be a family of smooth approximations of
  $\xi, u_0$ obtained by convolution with a rescaled smoothing kernel and
  $u_{\varepsilon}$ the classical solution to the Cauchy problem
  \begin{equation}
    \partial_t u_{\varepsilon} - a (u_{\varepsilon}) \Delta u_{\varepsilon} =
    \xi_{\varepsilon} + \sigma_{\varepsilon} \frac{a' (u_{\varepsilon})}{a
    (u_{\varepsilon})^2}, \qquad u (0) = u_{0, \varepsilon} . \qquad
    \label{eq:ur-formulation-approx}
  \end{equation}
  Then we can choose a sequence of constants
  $(\sigma_{\varepsilon})_{\varepsilon > 0}$ and a random time $T > 0$ in such
  a way that the family of r.v. $(u_{\varepsilon})_{\varepsilon > 0} \subseteq
  \LL_T^{\alpha} (\mathbbm{T}^2)$ converges almost surely as $\varepsilon
  \rightarrow 0$ to a random element $u \in \LL_T^{\alpha} (\mathbbm{T}^2)$,
  where $\LL_T^{\alpha}$ is the parabolic space $C \left( [0, T], \CC^{\alpha}
  (\mathbbm{T}^2) \right) \cap C^{\alpha / 2} \left( [0, T], \CC^0
  (\mathbbm{T}^2) \right)$.
  
  This element can be characterised as the solution to a paracontrolled
  singular SPDE (see below for more details).
\end{theorem}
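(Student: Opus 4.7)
The plan is to recast the approximating Cauchy problem \eqref{eq:ur-formulation-approx} as a paracontrolled fixed-point equation, perform stochastic renormalisation of the ill-defined products, and pass to the limit $\varepsilon \to 0$ through continuity of the solution map in an enhanced data space. Using the Bony decomposition one writes
\[
a(u_\varepsilon)\Delta u_\varepsilon \;=\; a(u_\varepsilon) \prec \Delta u_\varepsilon \;+\; \Delta u_\varepsilon \prec a(u_\varepsilon) \;+\; a(u_\varepsilon) \circ \Delta u_\varepsilon;
\]
the two paraproducts are well defined, but the resonant term has regularity $2\alpha - 2 < 0$ and needs to be given a meaning by exploiting an a priori structure of the solution. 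I would impose a paracontrolled ansatz $u = u' \prec Y + u^\sharp$ with remainder $u^\sharp \in \CC^{2\alpha}$, where the derivative $u'$ is an explicit function of $a(u)$ and $Y$ is a parametric reference family $\{Y(z)\}_{z \in \mathbbm{R}}$ solving $\partial_t Y(z) - a(z) \Delta Y(z) = \xi$ with frozen coefficient $a(z)$; the value $Y(u(t,x))$ acquires meaning through the non-linear paracontrolled calculus introduced in this paper.

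Inserting the ansatz and applying commutator lemmas, together with a non-linear chain rule that exhibits $a(u)$ itself as paracontrolled by $Y$ with derivative $a'(u)\,u'$, reduces the singular part of $a(u_\varepsilon) \circ \Delta u_\varepsilon$ to the resonant product $Y_\varepsilon(z) \circ \Delta Y_\varepsilon(z)$ evaluated at $z = u_\varepsilon$. A direct Gaussian computation shows this object equals a finite piece plus a deterministic constant $\sigma_\varepsilon$ diverging like $\log \varepsilon^{-1}$. The prefactor $a'(u_\varepsilon)/a(u_\varepsilon)^2$ appearing in \eqref{eq:ur-formulation-approx} then emerges by differentiating the parametric reference in $z$ before evaluation: the $1/a^2$ comes from inverting the principal part of the frozen parabolic operator, and the $a'$ from the chain rule applied to $\partial_z Y(z)|_{z=u}$. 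With $\sigma_\varepsilon$ equal to this explicit divergent constant, the renormalised resonant product is finite and the approximating equation becomes equivalent, on a small time interval, to a regular fixed-point problem for $(u_\varepsilon, u^\sharp_\varepsilon)$ driven by a finite family $\mathbb{X}_\varepsilon$ of enhanced stochastic data built from $\xi_\varepsilon$, $Y_\varepsilon(\cdot)$, $\partial_z Y_\varepsilon(\cdot)$, and their renormalised resonant products.

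The convergence argument then splits into three parts: (i) $\mathbb{X}_\varepsilon$ converges almost surely in a suitable Besov product space, by Wiener chaos computations and Kolmogorov-type bounds on each entry; (ii) $u_{0,\varepsilon} \to u_0$ in $\CC^{\alpha}$ by construction of the mollification; (iii) the deterministic paracontrolled solution map is locally Lipschitz in $(u_0, \mathbb{X})$ up to a random time $T > 0$ determined by the size of the enhanced data. Combining these yields almost sure convergence $u_\varepsilon \to u$ in $\LL_T^{\alpha}(\mathbbm{T}^2)$ and identifies $u$ as the paracontrolled solution. The principal obstacle is the $u$-dependence of the reference: commutator and paraproduct estimates that are scalar for the semilinear PAM become $z$-indexed families here and must be controlled uniformly in $z \in \mathbbm{R}$; one must also prove a non-linear chain rule stating that $a(u)$ inherits a paracontrolled structure with derivative $a'(u)\,u'$ and a quantitatively small remainder, all stable under regularisation. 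Establishing these uniform estimates and correctly identifying the counterterm is the technical heart of the proof.
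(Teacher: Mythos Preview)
Your overall strategy is correct and matches the paper's: reformulate via a parametric template problem, make a paracontrolled ansatz, isolate the singular resonant product, renormalise it by subtracting $\sigma_\varepsilon$, and conclude via continuity of the solution map in an enhanced-noise space. There are, however, two implementation differences worth flagging.

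First, the paper does \emph{not} use a linear ansatz $u = u' \prec Y + u^\sharp$ with a separate derivative $u'$; instead it introduces a genuinely nonlinear time-smoothed paraproduct $\Pi_{\precprec}$ and writes $u = \Pi_{\precprec}(a(u),\vartheta) + u^\sharp$, where $\vartheta(\eta)=(-\eta\Delta)^{-1}\xi$. The parameter is $\eta = a(u)$, ranging over the compact interval $[\lambda,1]$, not $z\in\mathbbm{R}$; this makes the uniform-in-parameter estimates you worry about automatic, since one only needs $C^2_\eta$ control on a bounded set. The commutator machinery (your ``$z$-indexed families'') is then packaged once and for all into two lemmas: a nonlinear commutator comparing $\Pi_{\precprec}(g,h)\circ\Delta\Pi_{\precprec}(g,h)$ with $\Pi_\diamondsuit(g,h\circ\Delta h)$, and a paradifferential lemma measuring $\Pi_{\precprec}(g,\mathscr{L}U)-\Pi_\prec(g,\mathscr{L})\Pi_{\precprec}(g,U)$.

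Second, your attribution of the counterterm factors is slightly off. The $a'(u)$ does not come from $\partial_z Y(z)$ but from the paralinearisation $a(u)=a'(u)\prec u + R_a(u)$ applied before commuting with the resonant product; the $1/a(u)^2$ is simply $H_\varepsilon(\eta)=\mathbbm{E}[\vartheta_\varepsilon(\eta)\circ\Delta\vartheta_\varepsilon(\eta)]=-\sigma_\varepsilon/\eta^2$ evaluated at $\eta=a(u)$. The chain-rule step you describe is real but enters earlier, in reducing $a(u)\circ\Delta\theta$ to $a'(u)(\theta\circ\Delta\theta)$ plus regular remainders.
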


In order to devise a suitable formulation of eq.~(\ref{eq:ur-formulation}) and
obtain a theory with $u \in \CC^{\alpha}$ we decompose the non-linear
diffusion term in the l.h.s. with the help of Bony's
paraproduct~{\cite{meyer_remarques_1981}} and write
\begin{equation}
  \partial_t u - a (u) \prec \Delta u = \xi + \Phi (u)
  \label{e:paracontrolled-eq}
\end{equation}
with
\begin{equation}
  \Phi (u) \assign a (u) \circ \Delta u + a (u) \succ \Delta u
  \label{e:def-Phi}
\end{equation}
where $\prec, \succ$ are standard paraproducts and $\circ$ denotes the
resonant product (see Appendix~\ref{appendix:besov} for the definitions).
In~(\ref{e:paracontrolled-eq}) the l.h.s. is always well defined,
irrespectively of the regularity of the function $u$, and the problem becomes
that of controlling the resonant product $a (u) \circ \Delta u$ appearing in
the r.h.s.~. A key point of the analysis put forward below is that this term
can be expected to be of regularity $2 \alpha - 2 > \alpha - 2$, better than
the leading term $\xi \in \CC^{\alpha - 2}$.

Our approach can be described as follows. For an equation of the form
\[ \partial_t u - a_1 (u) \Delta u = a_2 (u) \xi, \]
we consider at first a \tmtextit{parametric} ``template'' problem with
constant coefficients $\eta_1, \eta_2$
\begin{equation}
  \partial_t \vartheta (\eta, t, x) - \eta_1 \Delta \vartheta (\eta, t, x) =
  \eta_2 \xi (t, x) \label{eq:template}
\end{equation}
where now $\eta = (\eta_1, \eta_2)$ is a fixed vector in $\mathbbm{R}^2$. A
\tmtextit{nonlinear} paraproduct $\Pi_{\precprec}$ will allow us to modulate
the parametric solution $\vartheta (\eta)$ with the quantity $a (u) = (a_1
(u), a_2 (u))$ in order to capture the most irregular part of the solution $u$
itself. As a consequence, the paracontrolled Ansatz
\[ u = \Pi_{\precprec} (a (u), \vartheta) + u^{\sharp} \]
will define a regular remainder term $u^{\sharp}$ which solves a standard PDE.
With this decomposition the resonant products appearing in
eq.~(\ref{e:paracontrolled-eq}) can be estimated along the lines of the
standard paracontrolled arguments introduced
in~{\cite{gubinelli_paracontrolled_2015}}, and all the arguments introduced
there can be extended in a straightforward manner to the quasilinear setting.

\

Recently Otto and Weber~{\cite{otto_quasilinear_2016}} and Bailleul,
Debussche and Hofmanov{\'a}~{\cite{hofmanova}} investigated quasilinear SPDEs
in the context of pathwise methods and in a range of regularities compatible
with the ones we will consider in this paper.
\begin{itemizedot}
  \item In~{\cite{otto_quasilinear_2016}} the authors obtained a local
  well-posedness result for equations of the form
  \[ \partial_t u (t, x) - a (t, x) \partial_x^2 u (t, x) = f (u (t, x)) \xi
     (t, x), \qquad (t, x) \in \mathbbm{T}^2 \]
  where both space and time variables take values in a one dimensional
  periodic domain and their noise can be white in time but colored in space,
  essentially behaving like a distribution of parabolic regularity in $(- 4 /
  3, 1)$. In order to do that they introduce a specific notion of
  \tmtextit{modelled} function and related estimates. \ \ \ \
  
  \item Bailleul, Debussche and Hofmanov{\'a} in~{\cite{hofmanova}} obtain
  local well-posedness for the generalised parabolic Anderson model equation
  \begin{equation}
    \partial_t u (t, x) - a (u (t, x)) \Delta u (t, x) = g (u (t, x)) \xi (x)
    \label{eq:deb-hof} \qquad t \geqslant 0, x \in \mathbbm{T}^2 .
  \end{equation}
  The authors obtain the same result as the one presented in
  Section~\ref{sec:gen-nonlinear} of our work, without the machinery of
  nonlinear paraproducts introduced here, but using only the basic tools of
  paracontrolled analysis and some clever transformations. 
\end{itemizedot}
Let us comment a bit on relations of our results with those contained in these
two papers.
\begin{itemizedot}
  \item The \tmtextit{parametric} controlled Ansatz of Otto and
  Weber~{\cite{otto_quasilinear_2016}} is the main source of inspiration for
  the present work. We think that it is a very deep and fundamental
  observation which is quite orthogonal to the development of an alternative
  theory for singular SPDEs which is the main aim of their work. Our paper
  shows that this idea survives outside their specific methodology, in
  particular that Safanov's approach to Schauder estimates is not necessary to
  handle quasilinear equations, and a relatively straightfoward extension of
  the paracontrolled approach is sufficient to encompass in a compact manner
  the results on quasilinear equations contained in their paper. \
  
  \item While the approach of Bailleul, Debussche and
  Hofmanov{\'a}~{\cite{hofmanova}} is more straightforward than ours, it has
  fundamental limitations. In particular we remark that the generalization
  of~(\ref{eq:deb-hof}) to matrix-valued diffusion coefficients $(a_{i j})_{i,
  j}$, namely to equations of the form
  \begin{equation}
    \partial_t u (t, x) - a_{i j} (u (t, x)) \frac{\partial^2}{\partial x_i
    \partial x_j} u (t, x) = g (u (t, x)) \xi, \qquad t \geqslant 0, x \in
    \mathbbm{T}^2, \label{e:matrix-valued-coeff}
  \end{equation}
  is out of reach of the techniques used in~{\cite{hofmanova}}, while can be
  treated directly in our framework and by Otto and Weber's approach.
  
  \item Another interesting observation we make in the present work is that
  the parametric Ansatz can pervade without problems all the coefficients of
  the equation. In particular we can allow very general noise terms whose law
  itself depends on the solutions of the equation. This is one of the main
  novelties of this paper, not present in other works on singular SPDEs. See
  eq.~(\ref{eq:general-model}) above and Section~\ref{sec:fully-generality}
  below for a detailed explanation of this case.
  
  \item Let us mention that Otto and Weber's parametric Ansatz has
  predecessors in the theory of rough paths and in standard stochastic
  analysis. Non--linear versions of rough paths have been considered by one of
  the authors in order to study the Korteweg--de Vries
  equation~{\cite{gubinelli_rough_2012}}. Non--linear Young integrals were
  used by one of the authors in joint work with
  Catellier~{\cite{catellier_averaging_2016}} to study the the regularising
  properties of sample paths of stochastic processes. See also the related
  work of Hu and Le~{\cite{hu_nonlinear_2014}} on differential systems in
  H{\"o}lder media. Relevant to this discussion of non-linear variants of
  rough paths is the work of Bailleul on rough
  flows~{\cite{bailleul_flows_2015}} and their application to
  homogeneisation~{\cite{bailleul_rough_2016}}. By looking at the composition
  $f (g (x))$ as the action of the distribution $\delta_{g (x)}$ on the
  function $f$, non--linear constructions can be linearised at the price of
  working in infinite--dimensional spaces: this is the approach chosen by
  Kelly and Melbourne to avoid non--linear generalisations of rough path
  theory in their study of homogeneisation of fast--slow system with random
  initial conditions~{\cite{kelly_deterministic_2014}}. It is also worth
  mentioning Kunita's theory of semimartingale vector
  fields~{\cite{kunita_stochastic_1984}}, which occupies a place in stochastic
  analysis quite similar to that occupied by these recent developments in the
  landscape of rough paths/paracontrolled distributions theories. A
  non--linear generalisation of the classic bilinear paraproducts already
  appeared in the notion of \tmtextit{paracomposition} introduced by
  Alinhac~{\cite{bony_microlocal_1991}}.
\end{itemizedot}

Usefulness of paraproducts in the analysis of non-linear PDEs is by now well
established: see for example the seminal paper of
Meyer~{\cite{meyer_remarques_1981}}, the early review of
Bony~{\cite{bony_microlocal_1991}}, the recent books of Alinhac and
G{\'e}rard~{\cite{alinhac_operateurs_1991}} and Bahouri, Chemin, and
Danchin~{\cite{bahouri_fourier_2011}}. Let us mention also the interesting
paper of H{\"o}rmander~{\cite{hormander_nash_moser_1990}} where
paradifferential operators allow to bypass the Nash--Moser fixpoint theorem in
some applications \ where the loss of regularity prevents straightforward use
of standard Banach fixpoint theorem. The main observation in that paper is
that, with the aid of paradifferential operators, it is possible to identify a
``corrected'' problem for which standard Banach fixpoint applies.

Paracontrolled calculus for singular SPDEs has beed introduced by Gubinelli,
Imkeller and Perkowski~{\cite{gubinelli_paracontrolled_2015}} (see also the
lecture notes~{\cite{gubinelli_lectures_2015}}) and used to study various
equations like the KPZ equation~{\cite{gubinelli_kpz_2015}}, the dynamic
$\Phi^4_3$ model~{\cite{catellier_paracontrolled_2013}} in $d = 3$ and its
global well--posedness~{\cite{mourrat_global_2016}}, the spectrum of the
continuous Anderson Hamiltonian in $d = 2$~{\cite{allez_continuous_2015}}. By
using heat--semigroup techniques paracontrolled calculus has been extended to
the manifold context by Bailleul and Bernicot~{\cite{bailleul_heat_2015}}.

Paracontrolled calculus is currently limited to ``first order'' computations.
This limitation is also ubiquitous in the present work. Even if, in practice,
this is not a big issue, and the calculus is still able to deal with a large
class of problems, it makes the paracontrolled approach less appealing for a
general theory of singular SPDEs. Let us remark that recently Bailleul and
Bernicot~{\cite{bailleul_higher_2016}} developed an higher order version of
the paracontrolled calculus. However, apart from these recent development,
whose impact is still to be assessed, the most general theory for singular
SPDEs has been developed by
Hairer~{\cite{hairer_theory_2014,hairer_singular_2014,friz_course_2014}} under
the name of \tmtextit{regularity structures} theory. Regularity structures are
a vast generalisation of Lyons' rough
paths~{\cite{lyons_differential_1998,lyons_system_2002,lyons_differential_2007}}
which give effective tools to describe non-linear operations acting on certain
spaces of distributions, their renormalization by subtraction of local
singularities and their use to solve singular SPDEs. Regularity structures
have been successfully applied to all the models mentioned so
far~{\cite{hairer_theory_2014,hairer_solving_2013}}, to other models like the
Sine--Gordon model~{\cite{hairer_dynamical_2014}} (which however can also be
handled via paracontrolled techniques) and to study weak universality
conjectures~{\cite{hairer_class_2015,hairer_large_2016}}. In their current
instantiation it does not seem possible to solve quasilinear SPDEs via
regularity structures. The results of the present paper hint to the fact that
a non-linear version of regularity structures is conceivable, at least in
principle. Indeed one can imagine models depending on additional parameters
and modelled distributions acting as evaluations of the parametric models at
certain space--time dependent values of the parameters. It would be
interesting to pursue further this intuition.

\

The structure of the paper is the following. In Section~\ref{s:nonlinear} we
introduce our basic tools: the non-linear paraproduct decomposition and some
related commutation lemmas. In Section~\ref{s:paracontrolled} we introduce the
paracontrolled Ansatz which allows to transform the singular
problem~(\ref{eq:ur-formulation}) into a well--behaved PDE. In
Section~\ref{s:fixpoint} we discuss a-priori estimates, the uniqueness of the
solution of the transformed PDE and its continuity w.r.t. the random data and
the initial condition. We also introduce the algebraic structure which allows
to renormalise the model. Section~\ref{s:renorm} deals with the
renormalization of the stochastic data and the construction of the enhanced
noise associated to white noise. Section~\ref{sec:gen-nonlinear} deals with
the extension of the results to the more general equation~(\ref{eq:deb-hof}).
In Section~\ref{sec:fully-generality} we develop a paracontrolled structure
for equation~\ref{eq:general-model} and define its renormalised enhanced
noise. Finally, in Appendix~\ref{appendix:besov} we review some reference
material on Besov spaces and linear paraproducts, and prove some technical
lemmas.

\

\paragraph{Notations}We will denote as $\CC^{\alpha} \assign
B^{\alpha}_{\infty, \infty} (\mathbbm{T}^2)$ the Zygmund space of regularity
$\alpha \in \mathbbm{R}$ on the torus $\mathbbm{T}^2$. See
Appendix~\ref{appendix:besov} for the definition of the general Besov spaces
$B^{\alpha}_{p, q}$, the Littlewoord--Paley operators $(\Delta_i)_{i \geqslant
- 1}$ and the basic properties thereof needed in this paper. If $V$ is a
Banach space and $T > 0$, we denote $C_T^{\alpha} V$ the space of
$\alpha$--H{\"o}lder functions in $C_T V \assign C ([0, T] ; V)$. We introduce
parabolic spaces $\LL^{\alpha}_T \assign C_T^{\alpha / 2} \CC^0 \cap C_T
\CC^{\alpha}$ with norm
\begin{equation}
  \| f \|_{\LL^{\alpha}_T} = \| f \|_{C_T^{\alpha / 2} \CC^0} + \| f \|_{C_T
  \CC^{\alpha}} . \label{e:def-Lnorm}
\end{equation}
Moreover for convenience we denote $\CC_T^{\alpha} \assign C_T \CC^{\alpha}$.
We will avoid to write explicitly the time span $T$ whenever this does not
cause ambiguities. We will need also spaces for functions of $(\eta, t, x)$
where $\eta$ is an additional parameter in $[\lambda, 1]$ for $\lambda \in (0,
1)$ which we denote $C_{\eta}^k V$ with the norm
\begin{equation}
  \| F \|_{C_{\eta}^k V} = \sup_{\eta \in [\lambda, 1]} \sup_{n = 0, \ldots,
  k} \| \partial_{\eta}^n F (\eta, \cdummy) \|_V, \label{e:def-Cetanorm}
\end{equation}
where $V$ is a Banach space of functions on $[0, T] \times \mathbbm{T}^2$: in
our case $V = \CC_T^{\alpha}$ or $V = \LL^{\alpha}_T$.

We will denote by $K_{i, x} (y) = 2^{2 i} K (2^i (x - y))$ the kernel of the
Littlewood--Paley operator $\Delta_i$ and $Q_{i, t} (s) = Q_i (t - s) = 2^{2
i} Q (2^{2 i} (t - s))$ a smoothing kernel at scale $2^{2 i}$ in the time
direction where $Q$ is a smooth, positive function with compact support in
$\mathbbm{R}_+$ and mass $1$. We introduce also the shortcut $P_{i, x} = K_{<
i - 1, x} = \sum_{j < i - 1} K_{i, x}$. Another notation shortcut widely used
in this article is to write $\int_{x, y}$ for integrals on $\mathbbm{T}^2$ or
$\mathbbm{R}$ with respect to the measures $\mathd x$ and $\mathd y$ without
specifying the integration bounds, whenever this does not create ambiguity.
Finally, we will note $\delta f^{t x}_{s y} = f (t, x) - f (s, y)$ and
$\delta_{\tau} f_{s y}^{t x} = f (s, y) + \tau (f (t, x) - f (s, y))$ for
$\tau \in [0, 1]$.

\section{Nonlinear paraproducts}\label{s:nonlinear}

In this section we introduce nonlinear paraproduct and some related results
that will be used in Section~\ref{s:paracontrolled} to analyse
equation~(\ref{eq:ur-formulation}).

Let $g : [0, T] \times \mathbbm{T}^2 \rightarrow \mathbbm{R}$, and $h :
\mathbbm{R} \times [0, T] \times \mathbbm{T}^2 \rightarrow \mathbbm{R}$ be
smooth functions. We can decompose the composition $h (g (\cdummy), \cdummy)$
via \tmtextit{nonlinear} paraproducts as follows. Define
\begin{eqnarray}
  \Pi_{\prec} (g, h) (t, x) & \assign & \sum_q \int_{y, z} P_{q, x} (y) K_{q,
  x} (z) h (g (t, y), t, z)  \label{e:nonlin-par-small}\\
  \Pi_{\circ} (g, h) (t, x) & \assign & \sum_{k \sim q} \int_{y, z} K_{k, x}
  (y) K_{q, x} (z) h (g (t, y), t, z)  \label{e:nonlin-par-same}\\
  \Pi_{\succ} (g, h) (t, x) & \assign & \sum_k \int_{y, z} K_{k, x} (z) P_{k,
  x} (y) h (g (t, z), t, y) .  \label{e:nonlin-par-big}
\end{eqnarray}
This gives a map
\begin{equation}
  (g, h) \mapsto \Pi_{\nocomma \diamondsuit} (g, h) \assign \Pi_{\prec} (g, h)
  + \Pi_{\circ} (g, h) + \Pi_{\succ} (g, h) = h (g (\cdot), \cdot) 
  \label{e:nonlin-decomp}
\end{equation}
that can be uniquely extended to
\[ \Pi_{\diamondsuit} : \CC_T^{\rho} \times C^2_{\eta} \CC_T^{\gamma}
   \rightarrow \CC_T^{\gamma \wedge \rho} \qquad \rho \in (0, 1), \gamma \in
   \mathbbm{R}, \rho + \gamma > 0, \]
thanks to the following bounds:

\begin{lemma}[Nonlinear paraproduct estimates]
  \label{l:decomp-est} Let $g \in \CC_T^{\rho}$ for some $\rho \in (0, 1)$, $g
  \in [\lambda, 1]$, and $h \in C^2_{\eta} \CC_T^{\gamma}$ for any $\gamma \in
  \mathbbm{R}$. Then
  \[ \| \Pi_{\prec} (g, h) \|_{\CC^{\gamma}_T} \lesssim \| h \|_{C_{\eta}
     \CC^{\gamma}_T}, \qquad \| \Pi_{\succ} (g, h) \|_{\CC^{\rho \wedge (\rho
     + \gamma)}_T} \lesssim \| g \|_{\CC^{\rho}_T} \| h \|_{C_{\eta}^1
     \CC^{\gamma}_T}, \]
  and
  \begin{eqnarray*}
    \| \Pi_{\prec} (g_1, h) - \Pi_{\prec} (g_2, h) \|_{\CC^{\gamma}_T} &
    \lesssim & \| g_1 - g_2 \|_{C_T L^{\infty}} \| h \|_{C_{\eta}^1
    \CC^{\gamma}_T},\\
    \| \Pi_{\succ} (g_1, h) - \Pi_{\succ} (g_2, h) \|_{\CC_T^{\rho \wedge
    (\rho + \gamma)}} & \lesssim & \| g_1 - g_2 \|_{C_T L^{\infty}} \left( \|
    g_1 \|_{\CC^{\rho}_T} + \| g_2 \|_{\CC^{\rho}_T} \right) \| h
    \|_{C_{\eta}^2 \CC^{\gamma}_T}\\
    &  & + \| g_1 - g_2 \|_{\CC^{\rho}_T} \| h \|_{C_{\eta}^1 \CC^{\gamma}_T}
    .
  \end{eqnarray*}
  Moreover if $\rho + \gamma > 0$ we have also
  \begin{eqnarray*}
    \| \Pi_{\circ} (g, h) \|_{\CC^{\rho + \gamma}_T} & \lesssim & \| h
    \|_{C_{\eta}^1 \CC^{\gamma}_T} \| g \|_{\CC^{\rho}_T},\\
    \| \Pi_{\circ} (g_1, h) - \Pi_{\circ} (g_2, h) \|_{\CC_T^{\rho + \gamma}}
    & \lesssim & \| g_1 - g_2 \|_{C_T L^{\infty}} \left( \| g_1
    \|_{\CC^{\rho}_T} + \| g_2 \|_{\CC^{\rho}_T} \right) \| h \|_{C_{\eta}^2
    \CC^{\gamma}_T}\\
    &  & + \| g_1 - g_2 \|_{\CC^{\rho}_T} \| h \|_{C_{\eta}^1 \CC^{\gamma}_T}
    .
  \end{eqnarray*}
  In particular if $\rho + \gamma > 0$ the composition $\Pi_{\diamondsuit} (g,
  h) = h (g (\cdot), \cdot)$ is linear in $h$ and locally Lipshitz in $g$:
  \begin{eqnarray*}
    \| \Pi_{\diamondsuit} (g, h) \|_{\CC_T^{\gamma}} & \lesssim & \| h
    \|_{C_{\eta}^1 \CC^{\gamma}_T} \| g \|_{\CC^{\rho}_T},\\
    \| \Pi_{\diamondsuit} (g_1, h) - \Pi_{\diamondsuit} (g_2, h)
    \|_{\CC_T^{\gamma}} & \lesssim & \| g_1 - g_2 \|_{\CC^{\rho}_T} \left( 1 +
    \| g_1 \|_{\CC^{\rho}_T} + \| g_2 \|_{\CC^{\rho}_T} \right) \| h
    \|_{C_{\eta}^2 \CC^{\gamma}} .
  \end{eqnarray*}
\end{lemma}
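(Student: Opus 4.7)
The plan is to bound each of $\Pi_{\prec}$, $\Pi_{\circ}$, $\Pi_{\succ}$ independently. In each case I write the nonlinear paraproduct as a series $\sum_q F_q$ over frequency blocks and check two things: the block $F_q$ has approximate spectral localization in $\{|\xi|\lesssim 2^q\}$, inherited from the bandwidth of the kernels $K_{q,x}$ and $P_{q,x}$; and $\|F_q\|_{L^\infty}\lesssim 2^{-q\alpha}$ for the target exponent~$\alpha$. The standard Besov summation lemma then gives $\|\sum_q F_q\|_{\CC^\alpha}\lesssim \sup_q 2^{q\alpha}\|F_q\|_{L^\infty}$.

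\textbf{The three pieces.} For $\Pi_{\prec}(g,h)$ the block $F_q^{\prec}(x)=\int_y P_{q,x}(y)\int_z K_{q,x}(z)\,h(g(t,y),t,z)\,\mathd z\,\mathd y$ is bounded by $\|\int_z K_{q,x}(z) h(\eta,t,z)\,\mathd z\|_{L^\infty_\eta}\lesssim 2^{-q\gamma}\|h\|_{C_\eta\CC^\gamma}$ uniformly in the $\eta$-argument $g(t,y)$, after which $P_{q,x}$ averages in $y$ at no cost, using no regularity of $g$. For $\Pi_{\succ}$ I exploit $\int_z K_{k,x}(z)\,\mathd z=0$ (for $k\geqslant 0$) to subtract the integrand at $z=x$ and Taylor expand in $\eta$ around $g(t,x)$, obtaining
\[F_k^{\succ}(x)=\int_z K_{k,x}(z)(g(t,z)-g(t,x))\int_0^1 (P_k\partial_\eta h)(\delta_\tau g_{tz}^{tx},t,x)\,\mathd\tau\,\mathd z.\]
This produces a gain $2^{-k\rho}\|g\|_{\CC^\rho}$ from the H\"older increment, paired with $\|P_k\partial_\eta h\|_{C_\eta L^\infty}\lesssim 2^{-k(\gamma\wedge 0)}\|h\|_{C^1_\eta\CC^\gamma}$, yielding overall decay $2^{-k(\rho\wedge(\rho+\gamma))}$. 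The resonant block $F_{k,q}^{\circ}$ (summed over the diagonal $k\sim q$) is handled analogously: the same Taylor subtraction on the $K_{k,x}$-side gives $2^{-k\rho}\|g\|_{\CC^\rho}$, and $\|\Delta_q h\|_{C_\eta L^\infty}\lesssim 2^{-q\gamma}\|h\|_{C^1_\eta\CC^\gamma}$ on the other, so the diagonal constraint collapses the exponent to $2^{-k(\rho+\gamma)}$, whose summability requires the assumption $\rho+\gamma>0$.

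\textbf{Lipschitz bounds.} For the difference estimates I substitute $g_1-g_2$ and use $h(g_1,\cdot)-h(g_2,\cdot)=(g_1-g_2)\int_0^1\partial_\eta h(g_2+\tau(g_1-g_2),\cdot)\,\mathd\tau$. For $\Pi_{\prec}$ this alone produces the claimed $\|g_1-g_2\|_{C_T L^\infty}\|h\|_{C^1_\eta\CC^\gamma}$ bound. For $\Pi_{\succ}$ and $\Pi_{\circ}$ the Taylor subtraction at $z=x$ performed separately for each $g_i$ generates mismatched remainders whose difference splits into a second-order term with $\partial_\eta^2 h$ matched against $\|g_1-g_2\|_{C_T L^\infty}(\|g_1\|_{\CC^\rho}+\|g_2\|_{\CC^\rho})\|h\|_{C^2_\eta\CC^\gamma}$, and a H\"older-increment term $(g_1-g_2)(t,z)-(g_1-g_2)(t,x)$ yielding $\|g_1-g_2\|_{\CC^\rho}\|h\|_{C^1_\eta\CC^\gamma}$. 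The composition bounds on $\Pi_{\diamondsuit}$ then follow by assembling the three subestimates.

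\textbf{Main obstacle.} The chief technical point is justifying the approximate spectral localization of each block $F_q$: the composition $h(g(t,y),t,z)$ does not preserve Littlewood--Paley annular supports, so one must verify that all the residual $x$-dependence of $F_q(x)$ is mediated either by $K_{q,x}$ or by $P_{q,x}$, both Fourier-localized at scale $2^q$ in the $x$-variable, so that $\Delta_j F_q$ decays rapidly for $j\gg q$ and the block summation lemma applies. Once this localization is in place, the rest is careful bookkeeping of the Taylor remainders and kernel integrals outlined above.
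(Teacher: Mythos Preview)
Your proposal is correct and follows essentially the same route as the paper. Both arguments rest on the two elementary observations $\|\Delta_k h(\eta,t,\cdot)\|_{L^\infty}\lesssim 2^{-\gamma k}\|h\|_{C_\eta\CC^\gamma}$ and $\|\Delta_k h(g(t,y),t,\cdot)-\Delta_k h(g(t,y'),t,\cdot)\|_{L^\infty}\lesssim 2^{-\gamma k}\|h\|_{C^1_\eta\CC^\gamma}\|g\|_{\CC^\rho}|y-y'|^\rho$, the second of which is exactly your zero-mean subtraction followed by first-order Taylor in $\eta$; the Lipschitz estimate for $\Pi_\succ$ and $\Pi_\circ$ is then obtained in the paper by the same second Taylor expansion you describe, splitting into a $\partial_\eta^2 h$ term carrying $\|g_1-g_2\|_{L^\infty}(\|g_1\|_{\CC^\rho}+\|g_2\|_{\CC^\rho})$ and a $\partial_\eta h$ term carrying $\|g_1-g_2\|_{\CC^\rho}$. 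Your explicit verification of the spectral localization of each block (annular for $\Pi_\prec,\Pi_\succ$, ball for $\Pi_\circ$, via the Fourier supports of $K_{q,x}$ and $P_{q,x}$ in the $x$-variable) is a point the paper leaves implicit, so if anything your write-up is slightly more careful there.
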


\begin{proof}
  Using the fact that
  \begin{eqnarray*}
    \| \Delta_k h (g (t, y), t, \cdot) \|_{L^{\infty}} & \lesssim & 2^{-
    \gamma k} \| h \|_{C_{\eta} \CC_T^{\gamma}},\\
    \| \Delta_k h (g (t, y), t, \cdot) - \Delta_k h (g (t, y'), t, \cdot)
    \|_{L^{\infty}} & \lesssim & 2^{- \gamma k} \| h \|_{C_{\eta}^1
    \CC_T^{\gamma}} \| g \|_{C_T  \CC^{\rho}} | y - y' |^{\rho},
  \end{eqnarray*}
  and
  \[ \| \Delta_k h (g_1 (t, y), t, \cdot) - \Delta_k h (g_2 (t, y), t, \cdot)
     \|_{L^{\infty}} \lesssim 2^{- \gamma k} \| h \|_{C_{\eta}^1
     \CC_T^{\gamma}} \| g_1 - g_2 \|_{C_T L^{\infty}}, \]
  we obtain the bounds on $\Pi_{\prec} (g, h)$, $\Pi_{\succ} (g, h)$,
  $\Pi_{\circ} (g, h)$ and $\Pi_{\prec} (g_1, h) - \Pi_{\prec} (g_2, h)$. We
  proceed to estimate the term $\Pi_{\succ} (g_1, h) - \Pi_{\succ} (g_2, h)$.
  We will use the following notation for brevity:
  \[ \delta g_{2 z}^{1 y} : = g_1 (t, y) - g_2 (t, z) \quad \tmop{and} \quad
     \delta_{\tau} g_{2 z}^{1 y} : = g_2 (t, z) + \tau (g_1  (t, y) - g_2 (t,
     z)) . \]
  Then
  \begin{eqnarray*}
    &  & | \int_{y, z} K_{k, x} (z) P_{k, x} (y) [h (g_1 (t, z), t, y) - h
    (g_2 (t, z), t, y)] |\\
    & = & | \int_{y, z, \tau \in [0, 1]} K_{k, x} (z) P_{k, x} (y)
    [\partial_{\eta} h (\delta_{\tau} g_{2 z}^{1 z}, t, y) \delta g_{2 z}^{1
    z} - \partial_{\eta} h (\delta_{\tau} g_{2 x}^{1 x}, t, y) \delta g_{2
    x}^{1 x}] |\\
    & \lesssim & | \int_{y, z, t \in [0, 1], \sigma \in [0, 1]} K_{k, x} (z)
    P_{k, x} (y) \partial^2_{\eta} h (\delta_{\sigma} (\delta_{\tau}
    g_2^1)^z_x, t, y) (\delta g^{1 z}_{1 x} - \delta g^{2 z}_{2 x}) \delta
    g_{2 z}^{1 z} |\\
    &  & + | \int_{y, z, \tau \in [0, 1]} K_{k, x} (z) P_{k, x} (y)
    \partial_{\eta} h (\delta_{\tau} g_{2 x}^{1 x}, t, y) (\delta g_{2 z}^{1
    z} - \delta g_{2 x}^{1 x}) |\\
    & \lesssim & \| g_1 - g_2 \|_{C_T L^{\infty}} \left( \| g_1 \|_{C_T
    \CC^{\rho}} + \| g_2 \|_{C_T \CC^{\rho}} \right) \| h \|_{C_{\eta}^2 C_T
    \CC^{\gamma}} 2^{- \rho k} \sum_{q < k - 1} 2^{- \gamma q}\\
    &  & + \| g_1 - g_2 \|_{C_T \CC^{\rho}}  \| h \|_{C_{\eta}^1 C_T
    \CC^{\gamma}} 2^{- \rho k} \sum_{q < k - 1} 2^{- \gamma q} .
  \end{eqnarray*}
  With the same reasoning we can bound the norm of $\Pi_{\circ} (g_1, h) -
  \Pi_{\circ} (g_2, h)$.
\end{proof}

We will need the following time-smoothed nonlinear paraproduct
\begin{equation}
  \Pi_{\precprec} (g, h) (t, x) \assign \sum_i \int_{y, s} Q_{i, t} (s) P_{i,
  x} (y) (\Delta_i h (g (s, y), t, \cdot)) (x), \label{e:def-nonlinear}
\end{equation}
with $Q \in C^{\infty}_c (\mathbbm{R})$ a kernel with total mass $1$ as
specified in the introduction, and $Q_{i, t} (s) \assign 2^{2 i} Q (2^i (t -
s))$. In (\ref{e:def-nonlinear}) we use the convention that a continuous
function $t \mapsto g (t)$ on $\mathbbm{R}_+$ is extended to $\mathbbm{R}$ by
defining $g (t) = g (0)$ for $t \leqslant 0$. This preserves the H{\"o}lder
norms of index in $[0, 1]$. The modified nonlinear paraproduct enjoys similar
bounds to the regular one.

\begin{lemma}
  \label{l:nonlinear-est}Let $g \in C_T L^{\infty}$, $g \in [\lambda, 1]$ and
  $h \in C^1_{\eta} \LL_T^{\gamma}$ for $\gamma \in (0, 2)$. Then
  \[ \| \Pi_{\precprec} (g, h) \|_{\CC^{\gamma}_T} \lesssim \| h \|_{C_{\eta}
     \CC_T^{\gamma}} \quad \tmop{and} \quad \| \Pi_{\precprec} (g, h)
     \|_{\LL^{\gamma}_T} \lesssim \| h \|_{C_{\eta} \LL_T^{\gamma}} . \]
  Moreover, $\Pi_{\precprec} (g, h)$ is linear in $h$ and:
  \begin{eqnarray*}
    \| \Pi_{\precprec} (g_1, h) - \Pi_{\precprec} (g_2, h) \|_{\CC_T^{\gamma}}
    & \lesssim & \| g_1 - g_2 \|_{C_T L^{\infty}} \| h \|_{C^1_{\eta}
    \CC_T^{\gamma}},\\
    \| \Pi_{\precprec} (g_1, h) - \Pi_{\precprec} (g_2, h) \|_{\LL_T^{\gamma}}
    & \lesssim & \| g_1 - g_2 \|_{C_T L^{\infty}} \| h \|_{C^1_{\eta}
    \LL_T^{\gamma}} .
  \end{eqnarray*}
\end{lemma}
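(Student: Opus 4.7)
My plan is to follow the template of the proof of Lemma~\ref{l:decomp-est}, estimating each dyadic block
\[ J_j(t, x) \assign \int_{y, s} Q_{j, t}(s) P_{j, x}(y) (\Delta_j h(g(s, y), t, \cdummy))(x)\, \mathd s\, \mathd y \]
separately; note that $J_j$ is spatially localised near frequency $2^j$ because $P_{j, x}$ is a low-pass filter at scale $2^j$ in $y$ and the inner $\Delta_j$ is band-limited in the final spatial variable at $2^j$. For the $\CC_T^\gamma$ bound it is enough to show $\|J_j(t, \cdummy)\|_\infty \lesssim 2^{-\gamma j} \|h\|_{C_\eta \CC_T^\gamma}$, which follows from $\|\Delta_j h(\eta, t, \cdummy)\|_\infty \lesssim 2^{-\gamma j} \|h\|_{C_\eta \CC_T^\gamma}$ (uniformly in $\eta \in [\lambda, 1]$) together with $\|Q_{j, t}\|_{L^1}, \|P_j\|_{L^1} = O(1)$. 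The Lipschitz-in-$g$ estimate is obtained, as in Lemma~\ref{l:decomp-est}, from the mean value identity
\[ h(g_1(s, y), t, z) - h(g_2(s, y), t, z) = \int_0^1 \partial_\eta h\bigl(g_2(s, y) + \tau(g_1(s, y) - g_2(s, y)), t, z\bigr)\, \mathd \tau \cdot (g_1(s, y) - g_2(s, y)), \]
which produces the expected factor $\|g_1 - g_2\|_{C_T L^\infty} \|h\|_{C_\eta^1 \CC_T^\gamma}$ inside each block.

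The genuinely new point is the parabolic regularity $\|\Pi_{\precprec}(g, h)\|_{C_T^{\gamma/2} \CC^0} \lesssim \|h\|_{C_\eta \LL_T^\gamma}$. Since $C_T^{\gamma/2} \CC^0$ amounts to a supremum over $j$ of $L^\infty$-norms of Littlewood--Paley blocks, it suffices to bound $\|J_j(t, \cdummy) - J_j(t', \cdummy)\|_\infty$ by $C|t - t'|^{\gamma/2}$ uniformly in $j$. I split $J_j(t, x) - J_j(t', x) = A_j + B_j$ where
\[ A_j = \int_{y, s} \bigl(Q_{j, t}(s) - Q_{j, t'}(s)\bigr) P_{j, x}(y) (\Delta_j h(g(s, y), t, \cdummy))(x)\, \mathd s\, \mathd y \]
isolates the shift of the time kernel with $h(\cdummy, t, \cdummy)$ kept fixed, and
\[ B_j = \int_{y, s} Q_{j, t'}(s) P_{j, x}(y) \Delta_j \bigl[h(g(s, y), t, \cdummy) - h(g(s, y), t', \cdummy)\bigr](x)\, \mathd s\, \mathd y \]
captures the time increment of $h$. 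For $A_j$ I combine $\|Q_{j, t} - Q_{j, t'}\|_{L^1} \lesssim \min(1, 2^{2 j}|t - t'|)$ (which follows from $\int |\dot Q| < \infty$) with $\|\Delta_j h\|_\infty \lesssim 2^{-\gamma j}$ to obtain $|A_j| \lesssim \min(2^{-\gamma j}, 2^{(2 - \gamma) j}|t - t'|) \|h\|_{C_\eta \CC_T^\gamma}$; a case distinction at the parabolic threshold $2^{-2j} \sim |t - t'|$ shows that each branch is $\leq C|t - t'|^{\gamma/2}$. For $B_j$ the bound is direct: $\sup_\eta \|\Delta_j[h(\eta, t, \cdummy) - h(\eta, t', \cdummy)]\|_\infty \leq \sup_\eta \|h(\eta, t, \cdummy) - h(\eta, t', \cdummy)\|_{\CC^0} \lesssim |t - t'|^{\gamma/2} \|h\|_{C_\eta C_T^{\gamma/2} \CC^0}$, and this passes through the $L^1$-normalised integrations against $Q_{j, t'}$ and $P_{j, x}$. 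The Lipschitz-in-$g$ bound in $\LL_T^\gamma$ is obtained by inserting the same mean-value device inside the $A_j$ and $B_j$ estimates.

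The main technical obstacle I expect is precisely the parabolic summation for $A_j$: one must check that the minimum of the small-increment bound $2^{(2 - \gamma) j}|t - t'|$ (where the smoothness of $Q$ is used) and the large-increment bound $2^{- \gamma j}$ always returns $|t - t'|^{\gamma/2}$ uniformly in $j$, without any logarithmic losses. This bookkeeping is exactly the quantitative justification for introducing the time-mollified kernel $Q_{i, t}$ in the definition of $\Pi_{\precprec}$, and it is what allows $\Pi_{\precprec}(g, h)$ to sit in $\LL_T^\gamma$ rather than only in $\CC_T^\gamma$.
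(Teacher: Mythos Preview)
Your proposal is correct and follows essentially the same approach as the paper's proof: the paper also reduces the $\CC_T^\gamma$ bound to Lemma~\ref{l:decomp-est} and handles $C_T^{\gamma/2}\CC^0$ by splitting the time increment of $\Delta_j\Pi_{\precprec}(g,h)$ into exactly your $A_j$ (the $Q_{i,t_1}-Q_{i,t_2}$ part) and $B_j$ (the $h(\cdot,t_1,\cdot)-h(\cdot,t_2,\cdot)$ part), bounding the latter by $\|h\|_{C_\eta C_T^{\gamma/2}\CC^0}$ and simply asserting that the former is $\lesssim |t_1-t_2|^{\gamma/2}\|h\|_{C_\eta\CC_T^\gamma}$. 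Your min-bound argument $\min(2^{-\gamma j},2^{(2-\gamma)j}|t-t'|)\lesssim |t-t'|^{\gamma/2}$ via the parabolic threshold $2^{-2j}\sim|t-t'|$ is precisely the computation the paper leaves implicit.
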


\begin{proof}
  The norm~$\| \Pi_{\precprec} (g, h) \|_{C_T \CC^{\gamma}}$ can be treated in
  the same way as in Lemma~\ref{l:decomp-est}. We estimate~$\| \Pi_{\precprec}
  (g, h) \|_{C_T^{\gamma / 2} \CC^0}$ as follows:
  \begin{eqnarray*}
    &  & \| \Delta_j \Pi_{\precprec} (g, h) (t_1) - \Delta_j \Pi_{\precprec}
    (g, h) (t_2) \|_{L^{\infty}}\\
    & \lesssim & \sup_x | \int_z K_{j, x} (z) \sum_{i \sim j} \int_{y, s}
    Q_{i, t_1} (s) P_{i, z} (y) [\Delta_i h (g (s, y), t_1, z) - \Delta_i h (g
    (s, y), t_2, z)] |\\
    &  & + \sup_x | \int_z K_{j, x} (z) \sum_{i \sim j} \int_{y, s} [Q_{i,
    t_1} (s) - Q_{i, t_2} (s)] P_{i, z} (y) \Delta_i h (g (s, y), t_2, z) |\\
    & \lesssim & \| h (\cdummy, t_1, \cdummy) - h (\cdummy, t_2, \cdummy)
    \|_{C_{\eta} \CC^0} + | t_1 - t_2 |^{\gamma / 2} \| h \|_{C_{\eta}
    \CC_T^{\gamma}} .
  \end{eqnarray*}
  The second inequality can be obtained easily with the same techniques used
  so far.
\end{proof}

\begin{remark}
  \label{rem:dumb}Using the Fourier support properties of the kernel $P_{q, x}
  (\cdummy)$ it is easy to see that it has mass $1$, i.e. $\int_y P_{q, x} (y)
  = 1$ $\forall x \in \mathbbm{T}^2, q \in \mathbbm{N}$. Then the paraproducts
  (\ref{e:nonlin-par-small}) and (\ref{e:def-nonlinear}) can be written as
  follows for a constant smooth function $g (t, x) = \bar{g}$, $\forall (t, x)
  \in \mathbbm{R} \times \mathbbm{T}^2$:
  \begin{eqnarray*}
    \Pi_{\prec} (\bar{g}, h) (t, x) & = & h (\bar{g}, t, x) =
    \Pi_{\diamondsuit} (\bar{g}, h),\\
    \Pi_{\precprec} (\bar{g}, h) (t, x) & = & h (\bar{g}, t, x) .
  \end{eqnarray*}
\end{remark}

\subsection{Nonlinear commutator}

The next technical ingredient is a commutator lemma between the non-linear
paraproduct of (\ref{e:def-nonlinear}) and the standard resonant product.
Since it will be needed below to analyse a term of the form $\Pi_{\precprec}
(g, h) \circ \Delta \Pi_{\precprec} (g, h)$, we will specialise our discussion
to this specific structure. Notice that in the following the various
space--time operators act pointwise in the parameter $\eta$, in the sense
that, for example:
\[ (h \circ \Delta h) (\eta, t, x) = (h (\eta, t, \cdot) \circ \Delta h (\eta,
   t, \cdot)) (x) . \]
\begin{lemma}
  \label{l:nonlin-comm}Define $\Lambda : C^{\infty} ([0, T], \mathbbm{T}^2)
  \times C_{\eta}^2 C^{\infty} ([0, T], \mathbbm{T}^2) \rightarrow C^{\infty}
  ([0, T], \mathbbm{T}^2)$ by
  \[ \Lambda (g, h) \assign [\Pi_{\precprec} (g, h) \circ \Delta
     \Pi_{\precprec} (g, h)] - \Pi_{\diamondsuit} (g, h \circ \Delta h) . \]
  Then for all \ $\rho \in (0, 1)$, $\gamma <\mathbbm{R}$, $\varepsilon > 0$
  such that $2 \gamma - 2 + \rho - \varepsilon > 0$ and $g \in [\lambda, 1]$,
  we have
  \begin{eqnarray*}
    \| \Lambda (g, h) \|_{\CC_T^{2 \gamma - 2 + \rho - \varepsilon}} &
    \lesssim & \lesssim \| g \|_{\LL_T^{\rho}} \| h \|_{C^1_{\eta}
    \CC^{\gamma}_T}^2,\\
    \| \Lambda (g_1, h) - \Lambda (g_2, h) \|_{\CC_T^{2 \gamma - 2 + \rho -
    \varepsilon}} & \lesssim & \| g_1 - g_2 \|_{C_T L^{\infty}} \left( \| g_1
    \|_{\LL^{\rho}_T} + \| g_2 \|_{\LL^{\rho}_T} \right) \| h \|^2_{C_{\eta}^2
    \CC^{\gamma}_T}\\
    &  & + \| g_1 - g_2 \|_{\LL^{\rho}_T} \| h \|^2_{C_{\eta}^1
    \CC^{\gamma}_T} .
  \end{eqnarray*}
  As a consequence $\Lambda$ can be uniquely extended to a locally Lipshitz
  function
  \[ \Lambda : \LL^{\rho}_T \times C_{\eta}^2 \CC_T^{\gamma} \rightarrow
     \CC_T^{2 \gamma - 2 + \rho - \varepsilon} . \]
\end{lemma}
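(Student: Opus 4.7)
The plan is to bound $\|\Delta_j \Lambda(g,h)\|_{L^\infty}$ uniformly in the dyadic level $j$ by $2^{-j(2\gamma-2+\rho-\varepsilon)}$ times the announced prefactor, combining the standard resonant-product gain $2^{-j(2\gamma-2)}$ with an extra $2^{-j\rho}$ gain produced by the regularity of $g$. The starting observation is Remark~\ref{rem:dumb}: for a constant $g=\bar g$ one has $\Pi_{\precprec}(\bar g,h)(t,x) = h(\bar g,t,x) = \Pi_{\diamondsuit}(\bar g,h)(t,x)$, so a direct computation of the resonant product gives $\Lambda(\bar g,h)=0$. Consequently the commutator only measures the variation of $g$ on the parabolic scale set by the kernels $Q_{i,t}$ and $P_{i,x}$.

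First I would unfold both terms in the definition of $\Lambda$. After inserting (\ref{e:def-nonlinear}), the resonant product produces a sum $\sum_{i_1 \sim i_2}$ of integrals involving $Q_{i_1,t}(s_1) Q_{i_2,t}(s_2) P_{i_1,x}(y_1) P_{i_2,x}(y_2) \Delta_{i_1} h(g(s_1,y_1),t,\cdot)(\cdot)\,\Delta\Delta_{i_2} h(g(s_2,y_2),t,\cdot)(\cdot)$, whereas $\Pi_{\diamondsuit}(g,h\circ\Delta h)(t,x)$ equals $(h\circ\Delta h)(g(t,x),t,x)$ pointwise. The key step is to Taylor-expand $h(\eta,t,z)$ in $\eta$ around the base point $g(t,x)$: the zeroth-order contribution reconstructs $\Pi_{\diamondsuit}(g,h\circ\Delta h)$ up to a frequency-truncation mismatch that costs an arbitrarily small $\varepsilon$ in regularity, while each first-order remainder carries a factor $\delta g_k := g(s_k,y_k) - g(t,x)$ of size $|\delta g_k| \lesssim \|g\|_{\LL^\rho_T}\bigl(|s_k-t|^{\rho/2} + |y_k-x|^\rho\bigr) \lesssim \|g\|_{\LL^\rho_T}\,2^{-i\rho}$ on the support of $Q_{i,t}(s_k)P_{i,x}(y_k)$. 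This is precisely where the \emph{parabolic} norm $\LL^\rho_T$ is needed rather than $C_T\CC^\rho$, since $s_k$ differs from $t$ through the time-smoothing kernel $Q_{i,t}$. Summing over $i_1 \sim i_2 \sim j$ with the Bernstein-type decay $\|\Delta_{i_k} h\|_{L^\infty} \lesssim 2^{-i_k \gamma}\|h\|_{C_\eta\CC^\gamma_T}$ yields the first announced bound.

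For the Lipschitz estimate I would apply the same calculation to $\Lambda(g_1,h)-\Lambda(g_2,h)$, pushing the Taylor expansion in $\eta$ to second order as in the proof of Lemma~\ref{l:decomp-est}. This produces two kinds of contributions: one where $\|g_1-g_2\|_{\LL^\rho_T}$ pairs with $\|h\|^2_{C^1_\eta\CC^\gamma_T}$, and one where $\|g_1-g_2\|_{C_T L^\infty}$ pairs with $(\|g_1\|_{\LL^\rho_T}+\|g_2\|_{\LL^\rho_T})\|h\|^2_{C^2_\eta\CC^\gamma_T}$, matching the claimed bound exactly. The extension to the full Banach spaces $\LL^\rho_T \times C^2_\eta \CC^\gamma_T$ then follows by density together with this local Lipschitz estimate.

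The main obstacle is the bookkeeping in the cancellation of the zeroth-order Taylor piece against $\Pi_{\diamondsuit}(g,h\circ\Delta h)$: the resonant product only couples pairs $(i_1,i_2)$ with $i_1 \sim i_2$, whereas $\Pi_{\diamondsuit}$ decomposes according to its own high-low splitting built from $K_{q,x}$ and $P_{q,x}$, so matching the two decompositions produces a small boundary error at each dyadic scale that must be absorbed into the $\varepsilon$ loss in the final regularity exponent. Once this mismatch is under control, the remainder of the argument follows the lines of Lemmas~\ref{l:decomp-est} and~\ref{l:nonlinear-est}.
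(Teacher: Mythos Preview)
Your approach is essentially the same as the paper's: freeze $g$ so that the zeroth-order piece reproduces $\Pi_{\diamondsuit}(g,h\circ\Delta h)$ exactly, and then bound the first-order remainder using the parabolic increment $\delta g$ together with the $\CC^\gamma$ decay of the Littlewood--Paley blocks of $h$. The paper organises this slightly differently: it first applies $\Delta_q$, writes $\Delta_q A(t,x)=\int K_{q,x}(z)\,A(t,z)\,\mathrm dz$, and then freezes at the \emph{running} point $g(t,z)$ rather than at $g(t,x)$. With that choice the frozen resonant product at $z$ is literally $(h\circ\Delta h)(g(t,z),t,z)=\Pi_{\diamondsuit}(g,h\circ\Delta h)(t,z)$, so after integrating against $K_{q,x}$ one obtains $\Delta_q\Pi_{\diamondsuit}(g,h\circ\Delta h)$ on the nose. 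The remaining two cross terms are then controlled via the auxiliary pointwise estimate (Lemma~\ref{l:para-increment}) on $\wp_i(g,h)(t,z)=\Delta_i\Pi_{\precprec}(g,h)(t,z)-\Delta_i\Pi_{\precprec}(g(t,z),h)(t,z)$, which plays exactly the role of your first-order Taylor remainder.

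One point worth correcting: the ``main obstacle'' you anticipate is not there. You worry that the resonant pairing $i_1\sim i_2$ has to be reconciled with the internal high--low splitting of $\Pi_{\diamondsuit}$, but recall from \eqref{e:nonlin-decomp} that $\Pi_{\diamondsuit}(g,F)(t,x)=F(g(t,x),t,x)$ is simply the composition. Hence, once you freeze $g$ at the evaluation point, the zeroth-order contribution of the resonant product is exactly $\sum_{k\sim l}\Delta_k h(g,\cdot)\,\Delta_l\Delta h(g,\cdot)=(h\circ\Delta h)(g,\cdot)=\Pi_{\diamondsuit}(g,h\circ\Delta h)$, with no boundary discrepancy to absorb. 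The $\varepsilon$-loss in the final exponent comes solely from the time smoothing $Q_{i,t}$ (cf.\ Lemma~\ref{l:para-increment}), not from any frequency-matching error. With that clarification your argument goes through and coincides with the paper's.
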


\begin{proof}
  Let $A (t, z) \assign \Pi_{\precprec} (g, h) \circ \Delta \Pi_{\precprec}
  (g, h) (t, z)$ . We can approximate $A (t, z)$ with its value for a fixed $g
  = g (t, z)$, to obtain
  \begin{eqnarray}
    \Delta_q A (t, x) & = & \int_z K_{q, x} (z) (\Pi_{\precprec} (g, h) \circ
    \Delta \Pi_{\precprec} (g, h)) (t, z) \nonumber\\
    & = & \int_z K_{q, x} (z) (\Pi_{\precprec} (g (t, z), h) \circ \Delta
    \Pi_{\precprec} (g (t, z), h)) (t, z) \nonumber\\
    &  & + \int_z K_{q, x} (z) ((\Pi_{\precprec} (g, h) - \Pi_{\precprec} (g
    (t, z), h)) \circ \Delta \Pi_{\precprec} (g (t, z), h)) (t, z) 
    \label{e:Deltaeta2}\\
    &  & + \int_z K_{q, x} (z) (\Pi_{\precprec} (g, h) \circ \Delta
    (\Pi_{\precprec} (g, h) - \Pi_{\precprec} (g (t, z), h))) (t, z) . 
    \label{e:Deltaeta3}
  \end{eqnarray}
  with the remainders (\ref{e:Deltaeta2}) and (\ref{e:Deltaeta3}). To estimate
  the first term notice that
  \[ \Delta_j [\Delta \Pi_{\precprec} (g (t, z), h)] (t, z) = \Delta_j
     \Pi_{\precprec} (g (t, z), \Delta h) (t, z) \]
  and that, by Remark~\ref{rem:dumb}
  \begin{eqnarray*}
    \Delta_i \Pi_{\precprec} (g (t, z), h) (t, z) & = & \int_{z'} K_{i, z}
    (z') \Pi_{\precprec} (g (t, z), h) (t, z')\\
    & = & \int_{z'} K_{i, z} (z') h (g (t, z), t, z')\\
    & = & \Delta_i h (g (t, z), t, \cdummy) (z) .
  \end{eqnarray*}
  This yields
  \begin{eqnarray*}
    &  & (\Pi_{\precprec} (g (t, z), h) \circ \Delta \Pi_{\precprec} (g (t,
    z), h)) (t, z) = \sum_{i \sim j} \Delta_i \Pi_{\precprec} (g (t, z), h)\\
    & = & \sum_{i \sim j} \Delta_i h (g (t, z), t, \cdummy) (z) \Delta_j
    [\Delta h (g (t, z), t, \cdummy)] (z) = \Pi_{\diamondsuit} (g, h \circ
    \Delta h) (t, z) .
  \end{eqnarray*}
  We proceed then to estimate (\ref{e:Deltaeta2}) and (\ref{e:Deltaeta3}). We
  obtain
  \[ \int_{_z} K_{_{q, x}} (z) [(\Pi_{\precprec} (g, h) - \Pi_{\precprec} (g
     (t, z), h)) \circ \Delta \Pi_{\precprec} (g (t, z), h)] (t, z) \]
  \[ = \int_{_z} K_{_{q, x}} (z) \sum_{_{i \sim j \gtrsim q}} (\Delta_i
     \Pi_{\precprec} (g, h) (t, z) - \Delta_i \Pi_{\precprec} (g (t, z), h)
     (t, z)) \Delta_j \Delta \Pi_{\precprec} (g (t, z), h) (t, z) . \]
  Using Lemma~\ref{l:nonlinear-est} we have
  \[ | \Delta \Delta_j \Pi_{\precprec} (g (t, z), h) (t, z) | \lesssim 2^{(2 -
     \gamma) j} \| h \|_{C_{\eta} \CC^{\gamma}_T} . \]
  Lemma~\ref{l:para-increment} gives
  \[ | \Delta_i (\Pi_{\precprec} (g, h) - \Pi_{\precprec} (g (t, z), h)) (t,
     z) | \lesssim 2^{- (\gamma + \rho - \varepsilon) i} \| g
     \|_{\LL^{\rho}_T} \| h \|_{C_{\eta}^1 \CC^{\gamma}_{_T}}, \]
  and thus (\ref{e:Deltaeta2}) is bounded by $2^{- (2 \gamma + \rho - 2 -
  \varepsilon) q} \| g \|_{\LL^{\rho}_T} \| h \|^2_{\CC^{\gamma}_{_T}}$.
  
  We can easily bound (\ref{e:Deltaeta3}) in the same way, and this proves the
  first inequality. For the second inequality, Lemma~\ref{l:nonlinear-est}
  yields
  \[ | \Delta_j \Delta \Pi_{\precprec} (g_1, h) (t, z) - \Delta_j \Delta
     \Pi_{\precprec} (g_2, h) (t, z) | \lesssim 2^{(2 - \gamma) j} \| g_1 -
     g_2 \|_{C_T L^{\infty}} \| h \|_{C^1_{\eta} \CC^{\gamma}_T}, \]
  and using the second inequality of Lemma~\ref{l:para-increment} we obtain
  the desired bound.
  
  The extension of $\Lambda$ to $\LL^{\rho}_T \times C_{\eta}^2
  \LL_T^{\gamma}$ is standard (see e.g. the proof of the commutator lemma
  in~{\cite{gubinelli_paracontrolled_2015}}, Lemma~2.4). 
\end{proof}

\begin{lemma}
  \label{l:para-increment}Let us introduce the shortcut notation
  \[ \wp_i (g, h) (t, z) \assign \Delta_i \Pi_{\precprec} (g, h) (t, z) -
     \Delta_i \Pi_{\precprec} (g (t, z), h) (t, z) . \]
  Then, with the same assumptions of Lemma~\ref{l:nonlin-comm}, we have
  \[ | \wp_i (g, h) (t, z) | \lesssim 2^{(\varepsilon - \rho - \gamma) i} \| g
     \|_{\LL^{\rho}_T} \| h \|_{C_{\eta}^1 \CC^{\gamma}_{_T}} \]
  and
  \[ | \wp_i (g_1, h) (t, z) - \wp_i (g_2, h) (t, z) | \]
  \[ \lesssim 2^{(\varepsilon - \rho - \gamma) i} \left[ \| g_1 - g_2
     \|_{\LL^{\rho}_T} \| h \|_{C_{\eta}^1 \CC^{\gamma}_T} \right. \left. + \|
     g_1 - g_2 \|_{C_T L^{\infty}} \left( \| g_1 \|_{\LL^{\rho}_T} + \| g_2
     \|_{\LL^{\rho}_T} \right) \| h \|_{C_{\eta}^2 \CC^{\gamma}_T} \right] \]
\end{lemma}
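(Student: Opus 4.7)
The plan is to use Remark~\ref{rem:dumb}, which gives $\Delta_i \Pi_{\precprec}(g(t,z), h)(t,z) = \Delta_i h(g(t,z), t, \cdot)(z)$ since $g(t,z)$ is constant in space--time. Using the unit--mass identities $\int_y P_{j,x}(y)\,\mathd y = \int_s Q_{j,t}(s)\,\mathd s = 1$ to insert the constant--argument term back into the paraproduct sum, I would write
\[
\wp_i(g, h)(t, z) = \sum_{j \sim i} \int K_{i,z}(x)\, Q_{j,t}(s)\, P_{j,x}(y)\, \bigl[\Delta_j h(g(s,y), t, x) - \Delta_j h(g(t,z), t, x)\bigr]\, \mathd x\, \mathd y\, \mathd s,
\]
where the restriction to $j \sim i$ comes from the Fourier--support structure of the paraproduct (any off--diagonal leakage being absorbed in the $2^{\varepsilon i}$ factor).

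Next I apply the fundamental theorem of calculus in the parameter $\eta$:
\[
\Delta_j h(g(s,y), t, x) - \Delta_j h(g(t,z), t, x) = \int_0^1 \partial_\eta \Delta_j h\bigl(\delta_\tau g^{sy}_{tz}, t, x\bigr)\, \mathd\tau\cdot \bigl(g(s,y) - g(t,z)\bigr),
\]
bound $\|\partial_\eta \Delta_j h\|_{L^\infty} \lesssim 2^{-\gamma j}\|h\|_{C^1_\eta \CC^\gamma_T}$, and control the $g$--increment by the parabolic splitting
\[
|g(s,y) - g(t,z)| \le \|g\|_{C_T \CC^\rho}|y - z|^\rho + \|g\|_{C_T^{\rho/2} \CC^0}|s - t|^{\rho/2} \lesssim \|g\|_{\LL^\rho_T}\bigl(|y-z|^\rho + |s-t|^{\rho/2}\bigr).
\]
The kernel localisations $|x-z| \lesssim 2^{-i}$, $|y-x| \lesssim 2^{-j}$, $|s-t| \lesssim 2^{-2j}$ combined with $j \sim i$ make both contributions $\lesssim 2^{-\rho i}$, and the $O(1)$ masses of the three kernels then yield the first stated bound.

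For the Lipschitz estimate I repeat the reduction on $\wp_i(g_1,h) - \wp_i(g_2,h)$ and split the doubly--differenced integrand (writing $\delta_\tau g_i$ for the interpolant $g_i(t,z)+\tau(g_i(s,y)-g_i(t,z))$) as
\begin{align*}
& \int_0^1 \partial_\eta \Delta_j h\bigl(\delta_\tau g_1, t, x\bigr)\, \mathd\tau \cdot \bigl[(g_1(s,y) - g_1(t,z)) - (g_2(s,y) - g_2(t,z))\bigr] \\
& \quad + \int_0^1 \bigl[\partial_\eta \Delta_j h(\delta_\tau g_1, t, x) - \partial_\eta \Delta_j h(\delta_\tau g_2, t, x)\bigr]\, \mathd\tau \cdot (g_2(s,y) - g_2(t,z)).
\end{align*}
The first line produces the contribution controlled by $\|g_1 - g_2\|_{\LL^\rho_T}\|h\|_{C_\eta^1 \CC^\gamma_T}$ by the argument of the first estimate. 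For the second line a further Taylor expansion in $\eta$ brings in $\partial_\eta^2 \Delta_j h$ and a factor $|\delta_\tau(g_1 - g_2)| \le \|g_1 - g_2\|_{C_T L^\infty}$, which combined with $\|g_2\|_{\LL^\rho_T} 2^{-\rho i}$ (and, by symmetry in the choice of base point, with $\|g_1\|_{\LL^\rho_T}$) yields the remaining Lipschitz term.

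The most delicate step is the initial Fourier--support analysis: one must justify that $F_j(t, \cdot) := \int Q_{j,t}(s) P_{j,\cdot}(y) \Delta_j h(g(s,y), t, \cdot)\, \mathd y\, \mathd s$ has spectrum approximately localised at scale $2^j$, so that only $j \sim i$ matter in $\Delta_i \sum_j F_j$ up to a controllable tail; this is precisely where the slack $2^{\varepsilon i}$ is paid. The remainder of the argument is routine kernel--localisation bookkeeping in the spirit of the proof of Lemma~\ref{l:decomp-est}.
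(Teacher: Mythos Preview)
Your overall architecture --- write $\wp_i$ as an integral against the difference $\Delta_j h(g(s,y),t,x)-\Delta_j h(g(t,z),t,x)$, Taylor in $\eta$, bound the $g$--increment using parabolic regularity, then exploit the kernel scales --- is exactly the paper's route. The Lipschitz split is also the right one.

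There is, however, a genuine gap, and it concerns precisely the point you flag as ``most delicate''. The spectral localisation is \emph{not} the source of the $\varepsilon$. For each fixed $(s,y)$ the function $x\mapsto(\Delta_j h(g(s,y),t,\cdot))(x)$ has Fourier support in an annulus at scale $2^j$, and $x\mapsto P_{j,x}(y)$ has Fourier support in the ball at scale $2^{j-1}$; their product (hence the $y$--integral) is supported in a fixed enlarged annulus at scale $2^j$. So $\Delta_i$ kills all but $j\sim i$ exactly, with no tail to absorb.

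The $\varepsilon$ actually enters through your parabolic splitting of the $g$--increment. You write
\[
|g(s,y)-g(t,y)| \le \|g\|_{C_T^{\rho/2}\CC^0}\,|s-t|^{\rho/2},
\]
but this is not justified: $\CC^0=B^0_{\infty,\infty}$ does not embed into $L^\infty$, so $\|g(s,\cdot)-g(t,\cdot)\|_{\CC^0}$ does not control pointwise values. The paper closes this by invoking Lemma~\ref{l:interpol}, which trades $\varepsilon$ of time regularity for the $L^\infty$ endpoint: $\|g\|_{C_T^{(\rho-\varepsilon)/2}L^\infty}\lesssim\|g\|_{\LL^\rho_T}$. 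With this, the time contribution yields $2^{-(\rho-\varepsilon)i}$ rather than $2^{-\rho i}$, which is exactly the stated bound. Once you replace your $C_T^{\rho/2}\CC^0$ step by this interpolation, the rest of your argument goes through unchanged.
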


\begin{proof}
  
  \begin{eqnarray*}
    &  & [\Delta_i \Pi_{\precprec} (g, h) - \Delta_i \Pi_{\precprec} (g (t,
    z), h)] (t, z)\\
    & = & \sum_{\underset{}{k \sim i}} \int_{_{\underset{s, \tau}{x, y}}}
    K_{i, z} (x) Q_{k, t} (s) P_{k - 1, x} (y) \partial_{\eta} \Delta_k h
    (\delta_{\tau} g^{s y}_{t z}, t, x) (\delta g^{s y}_{t y} + \delta g^{t
    y}_{t z})\\
    & \lesssim & \sum_{\underset{}{k \sim i}} \int_{_{\underset{s, \tau}{x,
    y}}} | K_{i, z} (x) Q_{k, t} (s) P_{k - 1, x} (y) | \| \partial_{\eta}
    \Delta_k h \|_{C_T L^{\infty}} | t - s |^{(\rho - \varepsilon) / 2} \| g
    \|_{C^{\rho / 2 - \varepsilon / 2}_T L^{\infty}}\\
    &  & + \sum_{\underset{}{k \sim i}} \int_{_{\underset{s, \tau}{x, y}}} |
    K_{i, z} (x) Q_{k, t} (s) P_{k - 1, x} (y) | \| \partial_{\eta} \Delta_k h
    \|_{C_T L^{\infty}} | y - z | \| g \|_{\CC^{\rho}_T}\\
    & \lesssim & 2^{- (\rho - \varepsilon) i} 2^{- \gamma i} \|
    \partial_{\eta} h \|_{\CC^{\gamma}_T} \left( \| g \|_{C^{\rho}_T \CC^0} +
    \| g \|_{\CC^{\rho}_T} \right)
  \end{eqnarray*}
  where we used the notation $\delta g^{s y}_{t z} = g (s, y) - g (t, z)$,
  $\delta_{\tau} g^{s y}_{t z} = g (t, z) + \tau (g (s, y) - g (t, z))$ and
  Lemma~\ref{l:interpol}. This proves the first bound.
  
  The second inequality can be obtained in the same way with the techniques
  already used here and in Lemma~\ref{l:decomp-est}.
\end{proof}

\subsection{Approximate paradifferential problem}\label{s:approx-paradiff}

In this section we construct an approximate solution to the equation
\begin{equation}
  (\partial_t - g \prec \Delta) u = f, \qquad u (0, \cdot) = 0,
  \label{e:paradiff}
\end{equation}
with data $f \in \CC^{\gamma - 2}$ and $g \in \LL^{\rho}_T$, for some fixed
$\rho, \gamma \in (0, 1)$. The idea is to obtain it via a certain class of
paradifferential operators. We introduce the operator $\LL$ acting on
functions of~$(\eta, t, x)$ by
\begin{equation}
  \left( \LL U \right) (\eta, t, x) \assign \partial_t U (\eta, t, x) - \eta
  \Delta U (\eta, t, x) . \label{e:def-Loper}
\end{equation}
We will also use the notation $\LL_{\eta_1} \assign \partial_t - \eta_1
\Delta$. \

Observe that if $u$ does not depend on $\eta$ we can define
\begin{equation}
  \Pi_{\prec} \left( g, \LL \right) u \assign \Pi_{\prec} \left( g, \LL u
  \right)  \label{e:paradiff-oper}
\end{equation}
and from definition (\ref{e:nonlin-par-small}) with $h = \LL u$ we obtain
$\Pi_{\prec} \left( g, \LL \right) u = \partial_t u - g \prec \Delta u$.

We can describe the commutation between the differential operator $\LL$ and
the paraproduct $\Pi_{\precprec} (g, \cdummy)$ via the following estimate:

\begin{lemma}
  \label{l:def-psi}Let $\rho \in (0, 1)$, $\gamma \in \mathbbm{R}$. Let $U \in
  C_{\eta}^2 \CC^{\gamma}_T$ and $g \in \LL^{\rho}_T$ such that $g \in
  [\lambda, 1]$. Define
  \[ \Psi (g, U) \assign R_1 + R_2 \]
  with $R_1$ and $R_2$ as in (\ref{e:rem-1}),~(\ref{e:rem-2}). Then for every
  $\varepsilon > 0$

  \begin{equation}
    \| \Psi (g, U) \|_{\CC_T^{\rho + \gamma - 2 - \varepsilon}} \lesssim (1 +
    \| g \|_{C_T L^{\infty}}) \| g \|_{\LL^{\rho}_T}^{} \| U \|_{C_{\eta}^1
    \CC^{\gamma}_{_T}}  \label{e:psi-est} .
  \end{equation}
  Moreover, $\Psi (g, U)$ is linear in $U$ and
  \[ \| \Psi (g_1, U) - \Psi (g_2, U) \|_{\CC_T^{\rho + \gamma - 2 -
     \varepsilon}} \lesssim \| g_1 - g_2 \|_{\LL^{\rho}_T} \left( 1 + \| g_1
     \|_{\LL^{\rho}_T} + \| g_2 \|_{\LL^{\rho}_T} \right) \| U \|_{C_{\eta}^2
     \CC^{\gamma}_{_T}} . \]
  In particular, we have
  \begin{equation}
    \Psi (g, U) = \Pi_{\precprec} \left( g, \LL U \right) - \Pi_{\prec} \left(
    g, \LL \right) \Pi_{\precprec} (g, U) \in C_T \CC^{\rho + \gamma - 2 -
    \varepsilon}  \label{e:quasidef-psi}
  \end{equation}
  whenever this expression makes sense.
\end{lemma}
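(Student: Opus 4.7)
The goal is to exhibit $\Psi(g,U)$ as the commutator defect in the identity (\ref{e:quasidef-psi}), split into two pieces $R_1,R_2$ coming respectively from commuting $\partial_t$ and the Laplacian through $\Pi_{\precprec}(g,\cdot)$. The starting point is the direct expansion
\[
\Pi_{\precprec}(g,\LL U) - \Pi_{\prec}(g,\LL)\Pi_{\precprec}(g,U) \;=\; \Pi_{\precprec}(g,\LL U) - \partial_t\Pi_{\precprec}(g,U) + g\prec\Delta\Pi_{\precprec}(g,U),
\]
using $\Pi_{\prec}(g,\LL)u=\partial_t u - g\prec\Delta u$. When $\partial_t$ is passed under the sum in (\ref{e:def-nonlinear}), it either hits the variable $t$ inside $\Delta_i U(g(s,y),t,\cdot)(x)$ — which cancels the $\partial_t U$ part of $\LL U$ in $\Pi_{\precprec}(g,\LL U)$ — or it falls on the kernel $Q_{i,t}(s)$. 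This leaves the \emph{time commutator}
\[
R_1 = -\sum_i \iint \partial_t Q_{i,t}(s)\, P_{i,x}(y)\,\Delta_i U(g(s,y),t,\cdot)(x)\,\mathd s\,\mathd y,
\]
and a \emph{spatial commutator} $R_2$ coming from matching the term $-\sum_i\int Q_{i,t}(s)P_{i,x}(y)g(s,y)\Delta_i\Delta U(g(s,y),t,\cdot)(x)$ (from the $-\eta\Delta U$ part of $\Pi_{\precprec}(g,\LL U)$) against $g\prec\Delta\Pi_{\precprec}(g,U)=\sum_j S_{j-1}g\,\Delta_j\Delta\Pi_{\precprec}(g,U)$.

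To estimate $R_1$ one cannot integrate by parts in $s$ directly, because $g$ is only $\rho/2$-Hölder in time. Instead one writes $\partial_t Q_{i,t}(s)=-\partial_s Q_{i,t}(s)$ and subtracts the value at $(s,y)=(t,x)$, using $\int\partial_s Q_{i,t}(s)\mathd s = 0$. The remaining increment is handled by a first-order Taylor expansion in $\eta$:
\[
\Delta_i U(g(s,y),t,\cdot)(x) - \Delta_i U(g(t,x),t,\cdot)(x) = \int_0^1 \partial_\eta \Delta_i U(\delta_\tau g_{tx}^{sy},t,\cdot)(x)\,\delta g_{tx}^{sy}\,\mathd\tau.
\]
On the support of $Q_{i,t}(s)P_{i,x}(y)$ one has $|\delta g_{tx}^{sy}|\lesssim \|g\|_{\LL^\rho_T}(|t-s|^{\rho/2}+|x-y|^\rho)\lesssim 2^{-i\rho}$ by parabolic interpolation (Lemma \ref{l:interpol}), while $\|\partial_\eta\Delta_i U\|_{L^\infty}\lesssim 2^{-\gamma i}\|U\|_{C^1_\eta\CC^\gamma_T}$; after the $\int |\partial_s Q_{i,t}|\mathd s\lesssim 2^{2i}$ integration this produces $|\Delta_q R_1|\lesssim 2^{-q(\rho+\gamma-2-\varepsilon)}\|g\|_{\LL^\rho_T}\|U\|_{C^1_\eta\CC^\gamma_T}$, with the $\varepsilon$ absorbing the loss in the interpolation. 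For $R_2$ the spectral supports force $j\sim i$, and reshuffling gives the representative term $\sum_i\iint Q_{i,t}(s)P_{i,x}(y)[S_{i-1}g(t,x)-g(s,y)]\Delta_i\Delta U(g(s,y),t,\cdot)(x)$ plus analogous low-frequency corrections. The bracket is again $\lesssim \|g\|_{\LL^\rho_T}2^{-i\rho}$, and $|\Delta_i\Delta U|\lesssim 2^{(2-\gamma)i}\|U\|_{C_\eta\CC^\gamma_T}$, yielding the desired Besov regularity exactly as in Lemma \ref{l:para-increment}. The prefactor $(1+\|g\|_{C_TL^\infty})$ absorbs the $\|g\|_{L^\infty}$ coming from the $g(s,y)$ factor sitting outside the Taylor increment in $R_2$.

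The identity (\ref{e:quasidef-psi}) is then just the bookkeeping of the two expansions, valid on smooth data; the a priori bound (\ref{e:psi-est}) extends $\Psi$ by density to the full product space $\LL^\rho_T\times C^2_\eta\CC^\gamma_T$. The Lipschitz estimate in $g$ follows by applying the same dissection to $g_1-g_2$: each appearance of $g$ in the brackets of $R_1,R_2$ produces either a first-order difference (paired with $\|U\|_{C^1_\eta}$) or a cross term requiring a \emph{second}-order Taylor expansion of $U$ in $\eta$ (paired with $\|U\|_{C^2_\eta}$ and $\|g_1-g_2\|_{C_TL^\infty}(\|g_1\|_{\LL^\rho}+\|g_2\|_{\LL^\rho})$), exactly as in the proof of Lemma \ref{l:nonlin-comm}.

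\textbf{Main obstacle.} The most delicate step is the spectral bookkeeping for $R_2$: the ordinary paraproduct $g\prec\cdot$ uses the Fourier cut-offs $S_{j-1}g$, whereas $\Pi_{\precprec}$ is built from the parabolic kernels $P_{i,x}$ and $Q_{i,t}$, so the reduction to the diagonal $i\sim j$ leaves several mismatch terms that must all be shown to land in $\CC^{\rho+\gamma-2-\varepsilon}_T$. The temporal commutator $R_1$ is also subtle because the naive bound on $\int|\partial_t Q_{i,t}(s)|\mathd s\lesssim 2^{2i}$ is divergent when summed; only the combination of subtracting $\Delta_i U(g(t,x),t,\cdot)(x)$ (exploiting the vanishing moment of $\partial_s Q$) and the parabolic Hölder regularity of $g$ recovers the two missing powers $2^{-\rho i}$ needed for convergence in the target Besov norm.
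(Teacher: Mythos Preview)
Your overall strategy is the paper's: expand the commutator (\ref{e:quasidef-psi}), Taylor-expand $U$ in $\eta$, and control the resulting $g$-increments via $\|g\|_{\LL^\rho_T}$. The estimates you sketch for the time piece (vanishing moment of $\partial_sQ$, then parabolic H\"older gain $2^{-(\rho-\varepsilon) i}$) and for the $g$-mismatch piece are exactly the ones used there. Your grouping into $R_1,R_2$, however, is not the paper's: in (\ref{e:rem-1})--(\ref{e:rem-2}) the paper takes $R_1$ to be the single mismatch term $[g(t,y)-g(s,y')]\,\Delta\Delta_iU(g(s,y'),t,z)$ (coming from the difference between the outer $g\prec\cdot$ and the inner frozen $\eta=g(s,y')$), while $R_2$ collects all three pieces produced by commuting $\partial_t$ and $\Delta$ through the kernels $Q_{i,t}(s)P_{i,x}(y)$ --- so what you call $R_1$ is actually the last summand of the paper's $R_2$. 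Since $\Psi$ is defined as the sum this is only a labeling difference, not an error.

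There is one genuine omission in your spatial analysis. When $\Delta_x$ falls on $P_{i,x}(y)\,\Delta_iU(g(s,y),t,\cdot)(x)$, the product rule gives, besides the $P_{i,x}\,\Delta\Delta_iU$ term you keep, the two cross terms $(\Delta_xP_{i,x})\,\Delta_iU$ and $2(\nabla_xP_{i,x})\cdot\nabla\Delta_iU$; these are precisely the first two lines of the paper's (\ref{e:rem-2}). They are not ``low-frequency corrections'' arising from the reduction to $j\sim i$: they live at frequency $\sim 2^i$ and carry divergent factors $2^{2i}$, $2^{i}$ from differentiating $P_i$. One controls them by using $\int\Delta_xP_{i,x}(y)\,\mathd y=\int\nabla_xP_{i,x}(y)\,\mathd y=0$ to subtract $\Delta_iU(g(s,x),t,\cdot)$ and gain $2^{-\rho i}\|g\|_{\CC^\rho_T}$ from the $y$-increment of $g$, exactly as you already do elsewhere. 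With these two terms included (and the $(1+\|g\|_{C_TL^\infty})$ prefactor coming from the outer $S_{j-1}g$ multiplying them) your argument is complete and matches the paper's.
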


\begin{proof}
  We start considering $g \in C^{\infty} ([0, T], \mathbbm{T}^2)$ and $U \in
  C_{\eta}^2 C^{\infty} ([0, T], \mathbbm{T}^2)$, and
  prove~$(\ref{e:quasidef-psi})$ in this setting. Notice that $\Pi_{\precprec}
  \left( g (t, y), \LL_{g (t, y)} U \right) = \LL U (g (t, y))$. As a
  consequence, we can estimate
  \begin{eqnarray*}
    &  & \Pi_{\precprec} \left( g, \LL U \right) (t, x) - \Pi_{\prec} \left(
    g, \LL \Pi_{\precprec} (g, U) \right) (t, x)\\
    & = & \Pi_{\precprec} \left( g, \LL U \right) (t, x) - \sum_k \int_y
    P_{k, x} (y) \left( \LL_{g (t, y)} \Delta_k \Pi_{\precprec} (g, U) \right)
    (t, x)\\
    & = & \Pi_{\precprec} \left( g, \LL U \right) (t, x) - \sum_k \int_y
    P_{k, x} (y) (\partial_t \Delta_k \Pi_{\precprec} (g, U)) (t, x)\\
    &  & + \sum_k \int_y P_{k, x} (y) g (t, y) (\Delta \Delta_k
    \Pi_{\precprec} (g, U)) (t, x)\\
    & = & \Pi_{\precprec} \left( g, \LL U - \partial_t U \right) (t, x)\\
    &  & + \sum_k \int_y P_{k, x} (y) g (t, y) (\Delta_k \Pi_{\precprec} (g,
    \Delta U)) (t, x)\\
    &  & + \sum_k \int_{} P_{k, x} (y) g (t, y) (\Delta_k [\Delta,
    \Pi_{\precprec} (g, \cdot)] U) (t, x)\\
    &  & - \sum_k \int_{} P_{k, x} (y) (\Delta_k [\partial_t, \Pi_{\precprec}
    (g, \cdot)] U) (t, x)
  \end{eqnarray*}
  with the commutators
  \begin{eqnarray*}
    {}[\Delta, \Pi_{\precprec} (g, \cdot)] U & \assign & \Delta
    \Pi_{\precprec} (g, U) - \Pi_{\precprec} (g, \Delta U),\\
    {}[\partial_t, \Pi_{\precprec} (g, \cdot)] U & \assign & \partial_t
    \Delta_k \Pi_{\precprec} (g, U) - \Delta_k \Pi_{\precprec} (g, \partial_t
    U) .
  \end{eqnarray*}
  We have
  \[ \Pi_{\precprec} \left( g, \LL U - \partial_t U \right) (t, x) + \sum_k
     \int_y P_{k, x} (y) g (t, y) (\Delta_k \Pi_{\precprec} (g, \Delta U)) (t,
     x) = R_1 (t, x) \]
  with the definition
  \[ R_1 (t, x) \assign \]
  \begin{equation}
    \sum_{k, i} \int_{_{\overset{y', s}{y, z}}} P_{k, x} (y) K_{k, x} (z)
    P_{i, z} (y') Q_{i, t} (s) [g (t, y) - g (s, y')] \Delta \Delta_i U (g (s,
    y'), t, z) \label{e:rem-1}
  \end{equation}
  and
  \[ \sum_k \int_y P_{k, x} (y) [g (t, y) (\Delta_k [\Delta, \Pi_{\precprec}
     (g, \cdot)] U) (t, x) - (\Delta_k [\partial_t, \Pi_{\precprec} (g,
     \cdot)] U) (t, x)] = R_2 (t, x) \]
  with the definition
  \[ R_2 (t, x) \assign \sum_{k, i} \int_{y, y', s} P_{k, x} (y) K_{k, x} (z)
     Q_{i, t} (s) g (t, y) \Delta P_{i, z} (y') \Delta_i U (g (s, y'), t, z)
  \]
  \[ + 2 \sum_{k, i} \int_{y, y', s} P_{k, x} (y) K_{k, x} (z) Q_{i, t} (s) g
     (t, y) \nabla P_{i, z} (y') \nabla \Delta_i U (g (s, y'), t, z) \]
  \begin{equation}
    - \sum_{k, i} \int_{y, y', s} P_{k, x} (y) K_{k, x} (z) \partial_t Q_{i,
    t} (s) P_{i, z} (y') \Delta_i U (g (s, y'), t, z) . \label{e:rem-2}
  \end{equation}

  Indeed:
  \begin{eqnarray}
    ([\partial_t, \Pi_{\precprec} (g, \cdot)] U) (t, x) & = & \sum_i \int_{y,
    s} (\partial_t Q_{i, t}) (s) P_{i, x} (y) (\Delta_i U (g (s, y), t, x)),
    \nonumber\\
    ([\Delta, \Pi_{\precprec} (g, \cdot)] U) (t, x) & = & \sum_i \int_{y, s}
    Q_{i, t} (s) \Delta P_{i, x} (y) (\Delta_i U (g (s, y), t, x)) 
    \label{eq:q-time}\\
    &  & + 2 \sum_i \int_{y, s} Q_{i, t} (s) \nabla P_{i, x} (y) (\nabla
    \Delta_i U (g (s, y), t, x)) . \nonumber
  \end{eqnarray}
  This shows that $(\ref{e:quasidef-psi})$ holds for smooth functions.
  
  With the techniques used in Lemma~\ref{l:para-increment} we can estimate
  \[ | \Delta_q R_1 (t, x) | \lesssim \sum_{k \sim q} (2^{- (\rho -
     \varepsilon) k} \| g \|_{C_T^{\rho / 2} \CC^0} + 2^{- \rho k} \| g
     \|_{\CC^{\rho}_T}) 2^{(2 - \gamma) k} \| U \|_{C_{\eta} \CC^{\gamma}_T} .
  \]
  By the spectral support properties of the commutators we have that
  \[ \| [\Delta, \Pi_{\precprec} (g, \cdot)] U \|_{\CC_T^{\gamma + \rho - 2}}
     \lesssim \| g \|_{\CC^{\rho}_T} \| U \|_{C^1_{\eta} \CC^{\gamma}_T}, \]
  and
  \[ \| \Delta_q [\partial_t, \Pi_{\precprec} (g, \cdot)] U \|_{C_T
     L^{\infty}} \lesssim (2^{(2 + \varepsilon - \rho - \gamma) q} \| g
     \|_{C_T^{\rho / 2} \CC^0} + 2^{(2 - \rho - \gamma) q} \| g
     \|_{\CC^{\rho}_T}) \| U \|_{C^1_{\eta} \CC^{\gamma}_T} . \]
  This yields
  \[ \| R_2 \|_{\CC_T^{\gamma + \rho - 2 - \varepsilon}} \lesssim (1 + \| g
     \|_{C_T L^{\infty}}) \| g \|_{\LL^{\rho}_T} \| U \|_{C^1_{\eta}
     \CC^{\gamma}_T} . \]
  We have so far proved (\ref{e:psi-est}) and then (\ref{e:quasidef-psi})
  follows by continuity. The local Lipshitz dependence on $g$ can be obtained
  via similar computations.
\end{proof}

\begin{remark}
  \label{r:approx-sol}If $f$ does not depend on $\eta$ we consider the
  parametric problem
  \begin{equation}
    (\partial_t - \eta \Delta) U_f (\eta, t) = f, \qquad U_f (\eta, 0) = 0,
    \qquad \eta \in [\lambda, 1], \label{e:def-U}
  \end{equation}
  which is solved by
  \[ U_f (\eta, t) = \int_0^t e^{\eta \Delta (t - s)} f \mathd s. \]
  Remark that
  \begin{eqnarray*}
    \partial_{\eta} U_f (\eta, t) & = & \int_0^t e^{\eta \Delta (t - s)} (t -
    s) \Delta f \mathd s \qquad \tmop{and}\\
    \partial_{\eta}^2 U_f (\eta, t) & = & \int_0^t e^{\eta \Delta (t - s)} (t
    - s)^2 \Delta^2 f \mathd s.
  \end{eqnarray*}
  We have, thanks to the well-known Schauder estimates of
  Lemma~\ref{l:schauder} (since $\eta \geqslant \lambda$):
  \begin{equation}
    \| U_f \|_{C_{\eta}^2 \LL^{\gamma}_T} \assign \sup_{n = 0, 1, 2}
    \sup_{\eta \in [\lambda, 1]} \| \partial_{\eta}^n U_f (\eta)
    \|_{\LL_T^{\gamma}} \lesssim \| f \|_{\CC^{\gamma - 2}_T}  \label{e:U-est}
  \end{equation}
  We define then
  \begin{equation}
    u (t, x) \assign \Pi_{\precprec} (g, U_f) (t, x) \label{e:def-u}
  \end{equation}
  and observe that $u (t, x)$ is an approximate solution of
  equation~(\ref{e:paradiff}), indeed
  \begin{eqnarray*}
    (\partial_t - g \prec \Delta) u & = & \Pi_{\prec} \left( g, \LL
    \Pi_{\precprec} (g, U_f) \right) = \Pi_{\precprec} \left( g, \LL U_f
    \right) - \Psi (g, U_f)\\
    & = & f - \Psi (g, U_f)
  \end{eqnarray*}
  and the estimation in Lemma~\ref{l:def-psi} together with the
  bound~(\ref{e:U-est}) yield immediately the following inequality:
  \begin{equation}
    \| \Psi (g, U_f) \|_{\CC^{\rho + \gamma - 2 - \varepsilon}_T} \lesssim \|
    g \|_{\LL^{\rho}_T} (1 + \| g \|_{C_T L^{\infty}}) \| f \|_{\CC^{\gamma -
    2}_T} . \label{e:psi-est2}
  \end{equation}
\end{remark}

\section{Paracontrolled Ansatz\label{s:paracontrolled}}

In order to give a meaning to the PDE in (\ref{e:paracontrolled-eq}) with
initial condition $u_0 \in \CC^{\alpha}$, our initial goal will be to get
informations on solutions $\theta = \theta (g)$ of the equation
\[ \partial_t \theta - g \prec \Delta \theta = \xi, \]
for a fixed $g \in \CC^{\alpha}_T$, $2 / 3 < \alpha < 1$, $g \in [\lambda,
1]$. Using the results of Section~\ref{s:approx-paradiff}, we consider to this
effect the parametric problem
\[ (\partial_t - \eta \Delta) \vartheta (\eta, t) = \xi, \]
for $\eta \in [\lambda, 1]$. We will consider the stationary solution of this
problem which has the form
\begin{equation}
  \vartheta (\eta, x) = \int_0^{\infty} e^{\eta \Delta s} \xi \mathd s = (-
  \eta \Delta)^{- 1} \xi \label{eq:def-theta}
\end{equation}
and in order for (\ref{eq:def-theta}) to be well defined we impose that the
noise $\xi$ has zero mean on $\mathbbm{T}^2$ (this is a simplifying assumption
which can be easily removed, e.g. at the price of adding a linear term to the
equation). We can control (\ref{eq:def-theta}) by bounding its
Littlewood-Paley blocks with a Bernstein lemma for distributions with
compactly supported Fourier transform ({\cite{bahouri_fourier_2011}},
Lemma~2.1) to obtain:
\begin{equation}
  \| \vartheta \|_{C_{\eta}^2 \LL^{\alpha}_T} = \| \vartheta \|_{C_{\eta}^2
  \CC^{\alpha}_T} \lesssim \| \xi \|_{\CC^{\alpha - 2}} .
  \label{eq:vartheta-est} 
\end{equation}
We define now for every $t \in [0, T]$
\[ \theta (t, x) \assign \Pi_{\precprec} (a (u), \vartheta) . \]
Thanks to Lemma~\ref{l:nonlinear-est} we have the bound $\| \theta
\|_{\LL^{\alpha}_{_T}} \lesssim \| \vartheta \|_{C_{\eta} \LL^{\alpha}_{_T}}
\lesssim \| \xi \|_{\CC^{\alpha - 2}}$ . We observe that this definition
together with Lemma~\ref{l:def-psi} gives
\[ \partial_t \theta - a (u) \prec \Delta \theta = \xi - \Psi (a (u),
   \vartheta) \]
with $\| \Psi (a (u), \vartheta) \|_{\CC^{2 \alpha - 2 - \varepsilon}_T}
\lesssim \| a (u) \|^2_{\LL^{\alpha}_T} \| \xi \|_{\CC^{\alpha - 2}_T}$ . We
expect then $\Psi (a (u), \vartheta)$ to be bounded in $\CC^{2 \alpha - 2 -
\varepsilon}_T$ for any $\varepsilon > 0$. At this point let us introduce the
Ansatz
\begin{equation}
  u = \theta + u^{\sharp} . \label{e:firstansatz}
\end{equation}
\begin{remark}
  Notice that we are not making any assumption on the existence of such~$u$,
  which is the subject of Section~\ref{s:fixpoint}. Our aim here is to find
  the equation that a couple $(u, u^{\sharp}) \in \CC_T^{\alpha} \times
  \CC_T^{2 \alpha}$ verifying (\ref{e:firstansatz}) must solve, in order for
  $u$ to solve (\ref{e:paracontrolled-eq}).
\end{remark}

Observe that
\begin{eqnarray*}
  \partial_t u - a (u) \prec \Delta u & = & (\partial_t - a (u) \prec \Delta)
  \theta + (\partial_t - a (u) \prec \Delta) u^{\sharp}\\
  & = & \xi + (\partial_t - a (u) \prec \Delta) u^{\sharp} - \Psi (a (u),
  \vartheta) .
\end{eqnarray*}
It follows that $u^{\sharp}$ must solve
\begin{equation}
  \left\{ \begin{array}{l}
    (\partial_t - a (u) \prec \Delta) u^{\sharp} = \Phi (u) + \Psi (a (u),
    \vartheta)\\
    u^{\sharp} (t = 0) = u^{\sharp}_0 \assign u_0 - \Pi_{\precprec} (a (u_0),
    \vartheta) (t = 0) \in \CC^{\alpha}
  \end{array} \right. \label{e:usharp-equadiff} \qquad
\end{equation}
with $\Phi (u) = a (u) \circ \Delta u + a (u) \succ \Delta u$, and if we can
make sense of the resonant term $a (u) \circ \Delta u$, it is reasonable to
expect $u^{\sharp} (t, \cdummy) \in \CC^{2 \alpha}$ $\forall t \in (0, T]$.
Indeed, take $U^{\sharp} \assign U_Q$ to be the solution of
\begin{equation}
  \LL U^{\sharp} (\eta) \assign (\partial_t - \eta \Delta) U^{\sharp} (\eta) =
  Q \qquad U^{\sharp} (\eta, t = 0) = 0 \label{e:def-Usharp} 
\end{equation}
for some $Q = Q (u^{\sharp})$ to be determined and $\eta \in [\lambda, 1]$.
Using again Lemma~\ref{l:def-psi} as shown in Remark~\ref{r:approx-sol} we
have
\[ (\partial_t - a (u) \prec \Delta) \Pi_{\precprec} (a (u), U^{\sharp}) = Q
   (u^{\sharp}) - \Psi (a (u), U^{\sharp}) . \]
For $\eta \in [\lambda, 1] $we define $\PP_t u_0^{\sharp} (\eta) \assign
e^{\eta \Delta t} u_0^{\sharp}$ so that $\LL \left( \PP_t u_0^{\sharp} \right)
= 0$, with $\LL$ as in (\ref{e:def-Loper}).

We set
\begin{equation}
  u^{\sharp} \assign \Pi_{\precprec} (a (u), U^{\sharp}) + \Pi_{\precprec}
  \left( a (u), \PP u_0^{\sharp} \right) . \label{eq:decomp-sharp}
\end{equation}
Taking
\[ Q (u^{\sharp}) : = \Phi (u) + \Psi (a (u), \vartheta) + \Psi (a (u),
   U^{\sharp}) + \Psi \left( a (u), \PP u_0^{\sharp} \right), \]
we obtain that $U^{\sharp}$ solves equation~(\ref{e:def-Usharp}) if and only
if $u^{\sharp}$ solves equation~(\ref{e:usharp-equadiff}). As we will see, $Q
(u^{\sharp}) (t)$ belongs to~$\CC^{2 \alpha - 2}$ $\forall t \in (0, T]$ but
not uniformly as $t \rightarrow 0$. However it belongs to $\CC^{\alpha - 2}$
uniformly as $t \rightarrow 0$.

It remains to control the resonant term $a (u) \circ \Delta u$ appearing in
$\Phi (u)$. We have
\[ a (u) \circ \Delta u = a (u) \circ \Delta \theta + a (u) \circ \Delta
   u^{\sharp} . \]
By paralinearization (see Theorem~\ref{th:paralinearisation}) $a (u) = a' (u)
\prec u + R_a (u)$ with $\| R_a (u) \|_{\CC^{2 \alpha}_T} \lesssim 1 + \| u
\|^2_{\CC^{\alpha}_T}$, and then
\[ a (u) \circ \Delta \theta = (a' (u) \prec u) \circ \Delta \theta + R_a (u)
   \circ \Delta \theta . \]
In order to use the commutator lemma (Lemma~\ref{lem:commutator}) we can
estimate $a' (u)$, recalling that $\alpha \in (0, 1)$, as
\[ \| a' (u) \|_{\CC^{\alpha}_T} \lesssim \| a'' \|_{L^{\infty}} \| u
   \|_{\CC^{\alpha}_T} \]
and write
\[ a (u) \circ \Delta \theta = a' (u) (u \circ \Delta \theta) + C (a' (u), u,
   \Delta \theta) + R_a (u) \circ \Delta \theta . \]
Then, Ansatz (\ref{e:firstansatz}) gives
\[ a (u) \circ \Delta \theta = a' (u) (\theta \circ \Delta \theta) + a' (u)
   (u^{\sharp} \circ \Delta \theta) + C (a' (u), u, \Delta \theta) + R_a (u)
   \circ \Delta \theta . \]
Summarizing, we have:
\begin{eqnarray*}
  \Phi (u) & = & a' (u) (\theta \circ \Delta \theta) + a (u) \succ \Delta u +
  a' (u) (u^{\sharp} \circ \Delta \theta)\\
  &  & + C (a' (u), u, \Delta \theta) + R_a (u) \circ \Delta \theta + a (u)
  \circ \Delta u^{\sharp}
\end{eqnarray*}
Thanks to the nonlinear commutator (Lemma~\ref{l:nonlin-comm}), we can
decompose the resonant term $\theta \circ \Delta \theta$ to obtain
\begin{eqnarray*}
  \Phi (u) & = & a (u) \succ \Delta u + a' (u) (u^{\sharp} \circ \Delta
  \theta) + C (a' (u), u, \Delta \theta) + R_a (u) \circ \Delta \theta\\
  &  & + a' (u) \Lambda (a (u), \vartheta) + a' (u) \Pi_{\diamondsuit} (a
  (u), \Theta_2) + a (u) \circ \Delta u^{\sharp}
\end{eqnarray*}
and $\Lambda (a (u), \vartheta) \in \CC^{3 \alpha - 2 - \varepsilon}_T$ if $u
\in \LL^{\alpha}_T$. Here we defined
\begin{equation}
  \Theta_2 (\eta, x) \assign (\vartheta \circ \Delta \vartheta) (\eta, x) =
  \sum_{i \sim j} \Delta_i \vartheta (\eta, \cdummy) (x) \Delta_j [\Delta
  \vartheta (\eta, \cdummy)] (x) \label{e:def-Theta}
\end{equation}
Finally, recalling the decomposition of $u^{\sharp}$ in two
terms~(\ref{eq:decomp-sharp}) we obtain
\[ \Phi (u) = a' (u) \Pi_{\diamondsuit} (a (u), \Theta_2) + \Phi_1 (u) +
   \Phi_2 (u) \]
where
\begin{eqnarray*}
  \Phi_1 (u) & \assign & a (u) \succ \Delta u + C (a' (u), u, \Delta \theta) +
  R_a (u) \circ \Delta \theta + a' (u) \Lambda (a (u), \vartheta)\\
  &  & + a' (u) (\Pi_{\precprec} (a (u), U^{\sharp}) \circ \Delta \theta) + a
  (u) \circ \Delta \Pi_{\precprec} (a (u), U^{\sharp}),\\
  \Phi_2 (u) & \assign & a' (u) \left( \Pi_{\precprec} \left( a (u), \PP
  u_0^{\sharp} \right) \circ \Delta \theta \right) + a (u) \circ \Delta
  \Pi_{\precprec} \left( a (u), \PP u_0^{\sharp} \right) .
\end{eqnarray*}
Thanks to Lemma~\ref{l:decomp-est} the terms $a' (u) \Pi_{\diamondsuit} (a
(u), \Theta_2)$ and $\Phi_1 (u)$ can be estimated in $\CC^{2 \alpha - 2}_T$,
provided $\Theta_2 \in C^2_{\eta} \CC^{2 \alpha - 2}_T$ (see
Section~\ref{s:renorm}). On the other hand the term $\Phi_2 (u) (t)$ can be
estimated in $\CC^{2 \alpha - 2}$ only for strictly positive times $t > 0$ due
to the lack of regularity of the initial condition $u_0^{\sharp}$ which a
priori lives only in $\CC^{\alpha}$.

Note moreover that the specific form of $\Phi$ allows to deduce that if we
replace $\Theta_2$ by $\tilde{\Theta}_2 = \Theta_2 - H$ with $H \in C_{\eta}^2
\CC^{2 \alpha - 2}_T$ then this is equivalent to consider an equation for $u$
of the form
\[ \partial_t u (t, x) - a (u (t, x)) \Delta u (t, x) = \xi (x) - a' (u (t,
   x)) H (a (u (t, x)), t, x) . \]
Let us resume this long discussion in the following theorem:

\begin{theorem}
  \label{th:identification}Assume that $\xi \in \CC^0, u_0 \in \CC^2, H \in
  C_{\eta}^2 \CC_T^0$. $u \in C^1_T \CC^2$ is the classical solution to the
  equation
  \begin{equation}
    \partial_t u (t, x) - a (u (t, x)) \Delta u (t, x) = \xi (x) - a' (u (t,
    x)) H (a (u (t, x)), t, x), \qquad u (0) = u_0, \label{eq:pde}
  \end{equation}
  up to time $T > 0$ if
  \[ u = \Pi_{\precprec} \left( a (u), \vartheta + U^{\sharp} + \PP
     u_0^{\sharp} \right), \]
  where $\vartheta$ is the solution to eq.~(\ref{eq:def-theta}) and
  $U^{\sharp}$ is the solution to the PDE
  \begin{equation}
    (\partial_t - \eta \Delta) U^{\sharp} (\eta) = F (u, U^{\sharp},
    u_0^{\sharp}) \qquad U^{\sharp} (\eta, 0) = 0 \qquad \eta \in [\lambda, 1]
    \label{eq:pde-sharp}
  \end{equation}
  with
  \begin{eqnarray*}
    F (u, U^{\sharp}, u_0) & = & a' (u) \Pi_{\nocomma \diamondsuit} (a (u),
    \Theta_2) + \Phi_1 (u) + \Phi_2 (u) + \Psi (a (u), \vartheta)\\
    &  & + \Psi (a (u), U^{\sharp}) + \Psi \left( a (u), \PP u_0^{\sharp}
    \right)
  \end{eqnarray*}
  and $\Theta_2 = \vartheta \circ \Delta \vartheta - H$.
\end{theorem}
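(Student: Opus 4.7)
My plan is to run the derivation preceding the statement in reverse. Starting from the paracontrolled Ansatz, I apply $\partial_t - a(u) \prec \Delta$ to each of its three summands and verify that the outcome is $\xi + \Phi(u) - a'(u) H(a(u), \cdummy, \cdummy)$, which by the definition $\Phi(u) = a(u) \circ \Delta u + a(u) \succ \Delta u$ is exactly the right-hand side of~(\ref{eq:pde}) rewritten using $a(u) \Delta u = a(u) \prec \Delta u + \Phi(u)$.

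The first main step invokes Lemma~\ref{l:def-psi} termwise:
\[ (\partial_t - a(u) \prec \Delta) \Pi_{\precprec}(a(u), V) = \Pi_{\precprec}(a(u), \LL V) - \Psi(a(u), V) \]
for $V \in \{\vartheta, U^{\sharp}, \PP u_0^{\sharp}\}$. Using $\LL \vartheta = \xi$, $\LL U^{\sharp} = F$, $\LL(\PP u_0^{\sharp}) = 0$, and observing via Remark~\ref{rem:dumb} together with the total mass one property of the time kernel $Q$ that $\Pi_{\precprec}(a(u), \cdummy)$ reduces to the identity on $\eta$-independent inputs, summing the three contributions yields
\[ (\partial_t - a(u) \prec \Delta) u = \xi + F - \Psi(a(u), \vartheta) - \Psi(a(u), U^{\sharp}) - \Psi(a(u), \PP u_0^{\sharp}). \]
The three $\Psi$ terms are precisely the correction terms loaded into $F$ with opposite sign, so they cancel and the right-hand side collapses to $\xi + a'(u) \Pi_{\diamondsuit}(a(u), \Theta_2) + \Phi_1(u) + \Phi_2(u)$. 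Linearity of $\Pi_{\diamondsuit}$ in its second argument together with $\Pi_{\diamondsuit}(a(u), H) = H(a(u), \cdummy, \cdummy)$ and $\Theta_2 = \vartheta \circ \Delta \vartheta - H$ then turn this into $\xi + a'(u) \Pi_{\diamondsuit}(a(u), \vartheta \circ \Delta \vartheta) + \Phi_1(u) + \Phi_2(u) - a'(u) H(a(u), \cdummy, \cdummy)$.

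What remains is to identify $a'(u) \Pi_{\diamondsuit}(a(u), \vartheta \circ \Delta \vartheta) + \Phi_1(u) + \Phi_2(u)$ with $\Phi(u)$. This is exactly the algebraic identity derived inline in the discussion preceding the theorem, and I would simply repeat that derivation: paralinearise $a(u) = a'(u) \prec u + R_a(u)$ via Theorem~\ref{th:paralinearisation} inside $a(u) \circ \Delta \theta$, pull $a'(u)$ out of the resonant product with the standard commutator Lemma~\ref{lem:commutator}, replace $u$ by $\theta + u^{\sharp}$ inside the resonant term, use Lemma~\ref{l:nonlin-comm} to exchange $\theta \circ \Delta \theta = \Pi_{\precprec}(a(u), \vartheta) \circ \Delta \Pi_{\precprec}(a(u), \vartheta)$ for $\Pi_{\diamondsuit}(a(u), \vartheta \circ \Delta \vartheta) + \Lambda(a(u), \vartheta)$ (with the $\Lambda$ remainder absorbed into $\Phi_1$), and finally split $u^{\sharp}$ along the decomposition~(\ref{eq:decomp-sharp}) so that $\Phi_1$ and $\Phi_2$ appear exactly as defined. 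The initial condition is immediate: $\Pi_{\precprec}(a(u), U^{\sharp})(0) = 0$ because $U^{\sharp}(\eta, 0) = 0$, and since $Q$ is supported in $\mathbb{R}_+$ with $u$ extended by $u_0$ for negative times one has $\Pi_{\precprec}(a(u), \vartheta)(0) = \Pi_{\precprec}(a(u_0), \vartheta)(0)$, so the definition of $u_0^{\sharp}$ recovers $u(0) = u_0$. There is no analytically deep step — all the work has been compressed into Lemmas~\ref{l:def-psi} and~\ref{l:nonlin-comm} — and the only thing requiring genuine care is tracking the list of $\Psi$ contributions so that every term generated by the paradifferential commutation is matched by the opposite-signed term placed inside $F$.
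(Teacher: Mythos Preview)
Your proposal is correct and follows essentially the same approach as the paper: the theorem is stated as a summary of the derivation carried out in Section~\ref{s:paracontrolled}, and you reproduce that derivation (in the natural reverse direction appropriate for the ``if'' statement), invoking Lemma~\ref{l:def-psi} for each summand of the Ansatz, cancelling the $\Psi$ contributions against those in $F$, and unwinding the decomposition of $\Phi(u)$ via paralinearisation, the commutator Lemma~\ref{lem:commutator}, and Lemma~\ref{l:nonlin-comm}. The observation that $\Pi_{\precprec}(a(u),\cdot)$ acts as the identity on $\eta$-independent inputs is used implicitly in the paper (e.g.\ in Remark~\ref{r:approx-sol}) and you make it explicit; otherwise there is no difference.
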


\begin{definition}
  For any $\alpha \in \mathbbm{R}$ we define $\mathcal{X}^{\alpha} \subseteq
  C^2_{\eta} \CC^{\alpha} \times C^2_{\eta} \CC^{2 \alpha - 2}$ the closure of
  the image of the map
  \[ (\rho, H) \in C^2_{\eta} \CC^2 \times C^2_{\eta} \CC^0 \mapsto J (\rho,
     H) = (\rho, \rho \circ \Delta \rho - H) \in C^2_{\eta} \CC^2 \times
     C^2_{\eta} \CC^0 \]
  (in the topology of $C^2_{\eta} \CC^{\alpha} \times C^2_{\eta} \CC^{2 \alpha
  - 2}$).
\end{definition}

We call the elements in $\mathcal{X}^{\alpha}$ \tmtextit{enhanced noises}. In
the next section we will exploit the space $\mathcal{X}^{\alpha}$ for $2 / 3 <
\alpha < 1$ to solve equations~(\ref{eq:pde-sharp}) and (\ref{e:firstansatz}).
\

\section{Local wellposedness\label{s:fixpoint}}

The main result of this section is the local well--posedness for
equations~(\ref{e:firstansatz}) and~(\ref{eq:pde-sharp}) when $(\vartheta,
\Theta_2) \in \mathcal{X}^{\alpha}$ and $u_0 \in \CC^{\alpha}$ for $2 / 3 <
\alpha < 1$. This yields a unique solution to~(\ref{eq:pde}), thanks to
Theorem~\ref{th:identification}.

\begin{theorem}
  \label{t:well-posed}Let $\alpha > 2 / 3.$ Then for any $(\vartheta,
  \Theta_2) \in \mathcal{X}^{\alpha}$ and $u_0 \in \CC^{\alpha}$ there exists
  a time $T > 0$ depending only on $\| (\vartheta, \Theta_2)
  \|_{\mathcal{X}^{\alpha}}$ and $\| u_0 \|_{\alpha}$ up to which the system
  of equations~(\ref{e:firstansatz}) and~(\ref{eq:pde-sharp}) has a unique
  solution $(u, U^{\sharp}) \in \LL^{\alpha}_T \times C_{\eta}^2 \LL^{2
  \delta}_T$ for all $\delta < \alpha$ such that $2 \delta + \alpha > 2$. For
  any fixed $\tau > 0$ there exist a ball $B_{\tau} \subseteq \CC^{\alpha}
  \times \mathcal{X}^{\alpha}$ such that the solution map
  \[ \Sigma_{\tau} : (u_0, \vartheta, \Theta_2) \in B_{\tau} \mapsto (u,
     U^{\sharp}) \in \LL^{\alpha}_{\tau} \times C_{\eta}^2 \LL^{2
     \delta}_{\tau} \]
  is well defined and Lipshitz continuous in the data. 
\end{theorem}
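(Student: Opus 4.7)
The plan is to set up a Banach fixed point argument for the pair $(u, U^{\sharp})$ in a ball of $\LL^{\alpha}_T \times C_{\eta}^2 \LL^{2\delta}_T$ for some $\delta < \alpha$ with $2\delta + \alpha > 2$, and to use the time parameter $T$ as the smallness parameter. Given $(u, U^{\sharp})$ in such a ball, we define the map $\Gamma(u, U^{\sharp}) = (\tilde u, \tilde U^{\sharp})$ by setting
\[
\tilde u \assign \Pi_{\precprec}\bigl(a(u), \vartheta + U^{\sharp} + \PP u_0^{\sharp}\bigr),
\]
and letting $\tilde U^{\sharp}$ be the solution of $\LL \tilde U^{\sharp}(\eta) = F(u, U^{\sharp}, u_0^{\sharp})$ with zero initial condition, where $F$ is as in Theorem~\ref{th:identification}. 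A fixed point of $\Gamma$ is then a solution of the coupled system.

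The core estimates come directly from the results already proved in the excerpt. Using Lemma~\ref{l:nonlinear-est} and the bound $\|a(u)\|_{\LL^{\alpha}_T} \lesssim 1 + \|u\|_{\LL^{\alpha}_T}$ (from the $C^3$ hypothesis on $a$), the first component $\tilde u$ is controlled in $\LL^{\alpha}_T$ by $\|(\vartheta, \Theta_2)\|_{\mathcal X^{\alpha}}$, $\|u_0\|_{\alpha}$ and $\|U^{\sharp}\|_{C_{\eta} \LL^{2\delta}_T}$. For $\tilde U^{\sharp}$, Schauder estimates (Lemma~\ref{l:schauder} applied uniformly in $\eta \in [\lambda,1]$ and its $\eta$-derivatives) give $\|\tilde U^{\sharp}\|_{C^2_{\eta} \LL^{2\delta}_T} \lesssim \|F\|_{C^2_{\eta}\CC^{2\delta-2}_T}$. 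Each summand of $F$ is then estimated term by term: $a'(u) \Pi_{\diamondsuit}(a(u), \Theta_2)$ by Lemma~\ref{l:decomp-est}; the $\Psi$-terms by Lemma~\ref{l:def-psi} (losing an arbitrarily small $\varepsilon$); $\Phi_1(u)$ via standard paraproduct/commutator estimates together with the nonlinear commutator Lemma~\ref{l:nonlin-comm} (which is precisely what turns $\theta \circ \Delta\theta$ into $\Pi_{\diamondsuit}(a(u), \Theta_2)$ plus a remainder of regularity $3\alpha - 2 - \varepsilon > 0$); and $\Phi_2(u)$ via the same resonant/paraproduct bounds.

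The contraction property is obtained by applying the Lipschitz variants of the same lemmas (which are stated explicitly in Lemmas~\ref{l:decomp-est}, \ref{l:nonlinear-est}, \ref{l:nonlin-comm}, \ref{l:def-psi}) to the difference $\Gamma(u_1, U^{\sharp}_1) - \Gamma(u_2, U^{\sharp}_2)$. The gain of a small factor $T^{\kappa}$ for some $\kappa > 0$ comes from the Schauder inequality $\|\tilde U^{\sharp}\|_{C^2_{\eta}\LL^{2\delta}_T} \lesssim T^{\kappa} \|F\|_{C^2_{\eta}\CC^{2\delta'-2}_T}$ for a suitable $\delta' > \delta$, using the interpolation room afforded by the strict inequalities $2\delta + \alpha > 2$ and $2\alpha - 2 + \rho - \varepsilon > 0$. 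This also yields stability in the data, since tracking the dependence of the same estimates on $(u_0, \vartheta, \Theta_2)$ gives Lipschitz continuity of $\Sigma_{\tau}$ on balls of $\CC^{\alpha} \times \mathcal X^{\alpha}$.

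The main technical obstacle is the matching of regularities at the initial time. Since $u_0^{\sharp} \in \CC^{\alpha}$ only, the piece $\Phi_2(u)$ and the $\Psi(a(u), \PP u_0^{\sharp})$ contribution to $F$ are bounded in $\CC^{2\alpha - 2}$ only for strictly positive times, with a singular behaviour as $t \to 0$ that is exactly compensated by the smoothing of the heat semigroup acting on $\PP u_0^{\sharp}$. Closing the fixed point therefore requires that $U^{\sharp}$ be sought in a space $C^2_{\eta}\LL^{2\delta}_T$ with $\delta$ strictly below $\alpha$ (so that the parabolic regularization of the initial data is enough) yet with $2\delta + \alpha > 2$ (so that all resonant products and paraproducts in $F$ remain well defined and the gain $T^{\kappa}$ survives). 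Verifying that such a $\delta$ exists for $\alpha > 2/3$, and that the time-weighted versions of the paraproduct estimates in Section~\ref{s:nonlinear} extend to $\PP u_0^{\sharp}$, is the delicate bookkeeping step.
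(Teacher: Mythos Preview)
Your approach is essentially the same as the paper's: a Banach fixed point for the pair $(u,U^{\sharp})$ in $\LL^{\alpha}_T \times C^2_{\eta}\LL^{2\delta}_T$, with the Schauder estimates of Lemma~\ref{l:schauder} supplying the small factor $T^{\kappa}$ for the $U^{\sharp}$-component, and the term-by-term analysis of $F$ via Lemmas~\ref{l:decomp-est}, \ref{l:nonlinear-est}, \ref{l:nonlin-comm}, \ref{l:def-psi}. One small difference: the paper feeds the \emph{updated} $\Gamma_{U^{\sharp}}(u,U^{\sharp})$ into the definition of $\Gamma_u$, whereas you use the original $U^{\sharp}$; both variants work.

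There is, however, one point you glossed over. Your paragraph on contraction attributes the small factor $T^{\kappa}$ entirely to the Schauder estimate for $\tilde U^{\sharp}$. But the first component $\tilde u$ contains the pieces $\Pi_{\precprec}(a(u),\vartheta)$ and $\Pi_{\precprec}(a(u),\PP u_0^{\sharp})$, which involve no time integration and hence get no smallness from Schauder. Lemma~\ref{l:nonlinear-est} only gives
\[
\|\Pi_{\precprec}(a(u_1),\vartheta)-\Pi_{\precprec}(a(u_2),\vartheta)\|_{\LL^{\alpha}_T}
\lesssim \|u_1-u_2\|_{C_T L^{\infty}}\|\vartheta\|_{C^1_{\eta}\LL^{\alpha}_T},
\]
with no $T$-dependent prefactor. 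The paper closes this by observing that all iterates share the same initial value (since $\tilde u(0)$ is determined by $u_0$ alone), so that $u_1(0)=u_2(0)$ and hence
\[
\|u_1-u_2\|_{C_T L^{\infty}} \lesssim T^{\varepsilon/2}\|u_1-u_2\|_{C_T^{\varepsilon/2}L^{\infty}}
\lesssim T^{\varepsilon/2}\|u_1-u_2\|_{\LL^{\alpha}_T}
\]
via Lemma~\ref{l:interpol}. You should make this explicit; without it the map is not a contraction on the first component. A minor notational point: $F(u,U^{\sharp},u_0^{\sharp})$ is $\eta$-independent, so write $\|F\|_{\CC^{2\delta'-2}_T}$ rather than $\|F\|_{C^2_{\eta}\CC^{2\delta'-2}_T}$; the $C^2_{\eta}$ regularity of $\tilde U^{\sharp}$ comes from Remark~\ref{r:approx-sol} applied to an $\eta$-independent source.
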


\begin{remark}
  The proof is based on a Picard fixed point argument. In order to have a
  contraction map on a small time interval $[0, T]$, we carry on our analysis
  of $U^{\sharp}$ in the space $C_{\eta}^2 \LL^{2 \delta}_T \supset C_{\eta}^2
  \LL^{2 \alpha}_T$ and make use of the estimates of Lemma~\ref{l:schauder} to
  obtain a factor $T^{\varepsilon}$ for some (small) $\varepsilon > 0$.
\end{remark}

\begin{proof}[Theorem~\ref{t:well-posed}]
  Let $\mathcal{G}_T = \LL^{\alpha}_T \times C_{\eta}^2 \LL^{2 \delta}_T$. We
  introduce the map
  \[ \Gamma : (u, U^{\sharp}) \in \mathcal{G}_T \mapsto (\Gamma_u (u,
     U^{\sharp}), \Gamma_{U^{\sharp}} (u, U^{\sharp})) \in \mathcal{G}_T \]
  by
  \[ \Gamma_u (u, U^{\sharp}) \assign \Pi_{\precprec} (a (u), \vartheta) +
     \Pi_{\precprec} (a (u), \Gamma_{U^{\sharp}} (u, U^{\sharp})) +
     \Pi_{\precprec} \left( a (u), \PP u_0^{\sharp} \right) \]
  and
  \[ (\partial_t - \eta \Delta) \Gamma_{U^{\sharp}} (u, U^{\sharp}) (\eta) = F
     (u, U^{\sharp}, u_0^{\sharp}), \qquad \Gamma_{U^{\sharp}} (u, U^{\sharp})
     (\eta) (0) = 0, \qquad \eta \in [\lambda, 1], \]
  We will establish that this map is a contraction in the space $\mathcal{G}_T
  .$
  
  First, we have to show that there exists a ball $B \subset \mathcal{G}_T$
  such that $\Gamma (B) \subseteq B$. We have the bound $\left\| \PP
  u_0^{\sharp} \right\|_{C^1_{\eta} \LL^{\alpha}_T} \lesssim \| u^{\sharp}_0
  \|_{\CC^{\alpha}_T}$ . It is easy to obtain, using the estimates of
  Section~\ref{s:nonlinear} and Lemma~\ref{l:schauder}:
  \[ \| \int_0^T e^{- \eta \Delta (t - s)} \left[ \Phi_1 (u) + \Psi (a (u),
     \vartheta) + \Psi (a (u), U^{\sharp}) + \Psi \left( a (u), \PP
     u_0^{\sharp} \right) \right]_s \mathd s\|_{C^2_{\eta} \LL_T^{2 \delta}}
  \]
  \[ \lesssim T^{\kappa} \left( 1 + \| u \|_{\LL^{\alpha}_T} \right)^4 \left(
     1 + \| \xi \|_{\CC^{\alpha - 2}} \right)^2 \| u_0^{\sharp}
     \|_{\CC^{\alpha}} \left( 1 + \| U^{\sharp} \|_{C^2_{\eta} \LL^{2
     \delta}_T} \right) \]
  for some $\kappa > 0$.
  
  By the assumption that $(\vartheta, \Theta_2) \in \mathcal{X}^{\alpha}$ we
  deduce that there exists $M > 0$ such that $\| \Theta_2 \|_{C_{\eta}^2
  \CC_T^{2 \alpha - 2}} \leqslant M$. We have
  \[ \| \int_0^T e^{- \eta \Delta (t - s)} [a' (u) \Pi_{\nocomma \diamondsuit}
     (a (u), \Theta_2)]_s \mathd s\|_{C^2_{\eta} \LL_T^{2 \delta}} \lesssim
     T^{\alpha - \delta} \left( 1 + \| u \|_{\CC^{\alpha}_T} \right)^2 \|
     \Theta_2 \|_{C_{\eta}^2 \CC_T^{2 \alpha - 2}} . \]
  To bound the term $\Phi_2 (u)$ we observe that $\left\| \PP_t u^{\sharp}_0
  \right\|_{C_{\eta}^2 \CC^{2 \alpha}} \lesssim t^{- \frac{\alpha}{2}} \|
  u^{\sharp}_0 \|_{\CC^{\alpha}}$ thanks to Lemma~\ref{l:schauder}. This gives
  \[ \| \int_0^T e^{- \eta \Delta (t - s)} \Phi_2 (u)_s \mathd s\|_{C^2_{\eta}
     \LL_T^{2 \delta}} \lesssim T^{\alpha - \delta} \left( 1 + \| u
     \|_{\CC^{\alpha}_T} \right) \left( 1 + \| \xi \|_{\CC^{\alpha - 2}}
     \right) \| u^{\sharp}_0 \|_{\CC^{\alpha}} \]
  and then $\Gamma_{U^{\sharp}} (u, U^{\sharp})$ is bounded in $C^2_{\eta}
  \LL_T^{2 \delta}$ for $T$ small enough. We have also
  \begin{eqnarray*}
    \| \Gamma_u (u, U^{\sharp}) \|_{\LL^{\alpha}_T} & \lesssim & \| \xi
    \|_{\CC^{\alpha - 2}} + \| u^{\sharp}_0 \|_{\CC^{\alpha}} + \|
    \Gamma_{U^{\sharp}} (u, U^{\sharp}) \|_{C_{\eta} \LL_T^{\alpha}}\\
    & \lesssim & \| \xi \|_{\CC^{\alpha - 2}} + \| u^{\sharp}_0
    \|_{\CC^{\alpha}} + T^{\frac{2 \delta - \alpha}{2}} \| \Gamma_{U^{\sharp}}
    (u, U^{\sharp}) \|_{C^2_{\eta} \LL_T^{2 \delta}}
  \end{eqnarray*}
  and these bounds show that $\Gamma (B) \subseteq B$. The contractivity of
  $\Gamma_{U^{\sharp}} (u, U^{\sharp})$ can be obtained in the same way. Now
  consider $\Gamma_u (u, U^{\sharp})$: we have
  \begin{eqnarray*}
    &  & \| \Pi_{\precprec} (a (u_1), U_1^{\sharp}) - \Pi_{\precprec} (a
    (u_2), U_2^{\sharp}) \|_{\LL^{\alpha}_T}\\
    & \lesssim & T^{\frac{2 \delta - \alpha}{2}} \left( \| U^{\sharp}_1 -
    U^{\sharp}_2 \|_{C_{\eta} \LL^{2 \delta}} + \| u_1 - u_2 \|_{C_T
    L^{\infty}} \| U^{\sharp}_2 \|_{C_{\eta}^1 \LL^{2 \delta}} \right)
  \end{eqnarray*}
  while for the other terms in $\Gamma_u (u_1, U^{\sharp}_1) - \Gamma_u (u_2,
  U^{\sharp}_2)$ we remark that
  \[ \sup_{s \in [0, t]} \| u_{1, s} - u_0 - u_{2, s} + u_0 \|_{L^{\infty}}
     \lesssim t^{\varepsilon / 2} \| u_1 - u_2 \|_{C_{[0, t]}^{\varepsilon /
     2} L^{\infty}} . \]
  Then $\forall 0 < \varepsilon < \alpha$, using Lemma~\ref{l:nonlinear-est}
  and Lemma~\ref{l:interpol}:
  \begin{eqnarray*}
    \| \Pi_{\precprec} (a (u_1), \vartheta) - \Pi_{\precprec} (a (u_2),
    \vartheta) \|_{\LL^{\alpha}_T} & \lesssim & \| a (u_1) - a (u_2) \|_{C_T
    L^{\infty}} \| \vartheta \|_{C_{\eta}^1 \LL^{\alpha}_T}\\
    & \lesssim & \| u_1 - u_2 \|_{C_T L^{\infty}} \| \xi \|_{\CC^{\alpha -
    2}}\\
    & \lesssim & T^{\varepsilon / 2}  \| u_1 - u_2 \|_{C_T^{\varepsilon / 2}
    L^{\infty}} \| \xi \|_{\CC^{\alpha - 2}} \\
    & \lesssim & T^{\varepsilon / 2} \| u_1 - u_2 \|_{\LL^{\alpha}_T} \| \xi
    \|_{\CC^{\alpha - 2}} .
  \end{eqnarray*}
  With the same reasoning we estimate
  \begin{eqnarray*}
    &  & \left\| \Pi_{\precprec} \left( a (u_1), \PP u^{\sharp}_0 \right) -
    \Pi_{\precprec} \left( a (u_2), \PP u^{\sharp}_0 \right)
    \right\|_{\LL^{\alpha}_T}\\
    & \lesssim & T^{\varepsilon / 2} \| u_1 - u_2 \|_{C^{\varepsilon / 2}_T
    L^{\infty}} \left\| \PP u^{\sharp}_0 \right\|_{C_{\eta}^1
    \LL^{\alpha}_T}\\
    & \lesssim & T^{\varepsilon / 2} \| u_1 - u_2 \|_{\LL^{\alpha}_T} \|
    u^{\sharp}_0 \|_{\CC^{\alpha}_T}
  \end{eqnarray*}
  and then $\Gamma$ is a contraction for small times.
  
  The uniqueness of the solution $(u, U^{\sharp}) \in \LL^{\alpha}_T \times
  C_{\eta}^2 \LL^{2 \delta}_T$ and the Lipshitz continuity of the localized
  solution map $\Sigma_{\tau}$ can be proved along the same lines via standard
  arguments.
\end{proof}

\section{Renormalization\label{s:renorm}}

At this point we want to construct \tmtextit{an} enhanced noise $\Xi$
associated to the white noise $\xi$. Already in the standard setting of the
generalised PAM model with constant diffusion matrix, the construction of the
enhancement requires a renormalization since the resonant product $\vartheta
\circ \Delta \vartheta$ is not well defined.

Let $\psi \in \mathcal{S} (\mathbbm{T}^2)$ be a cutoff function and let
$\psi_{\varepsilon} (x) = \varepsilon^{- 2} \psi (x / \varepsilon)$. Then
define a regularised noise by $\xi_{\varepsilon} = \psi_{\varepsilon} \ast
\xi$ and let $\vartheta_{\varepsilon} = (- \eta \Delta)^{- 1}
\xi_{\varepsilon}$. Notice that
\begin{eqnarray*}
  H_{\varepsilon} (\eta) & \assign & \mathbbm{E} [\vartheta_{\varepsilon}
  (\eta, x) \circ \Delta \vartheta_{\varepsilon} (\eta, x)] =\mathbbm{E}
  [\vartheta_{\varepsilon} (\eta, x) \Delta \vartheta_{\varepsilon} (\eta,
  x)]\\
  & = & - \sum_{k \in \mathbbm{Z}^2 \backslash \{ 0 \}}
  \frac{\hat{\psi}_{\varepsilon} (k)^2}{\eta^2 | k |^2} = -
  \frac{\sigma_{\varepsilon}}{\eta^2}
\end{eqnarray*}
where
\[ \sigma_{\varepsilon} \assign \sum_{k \in \mathbbm{Z}^2 \backslash \{ 0 \}}
   \frac{\hat{\psi}_{\varepsilon} (k)^2}{| k |^2} \simeq | \log \varepsilon |
\]
as $\varepsilon \rightarrow 0$. Subtracting the diverging quantity
$H_{\varepsilon}$ to $\vartheta_{\varepsilon} \circ \Delta
\vartheta_{\varepsilon}$ and then taking the limit as $\varepsilon \rightarrow
0$ delivers a finite result.

\begin{theorem}
  \label{th:conv-white-noise}Take $\alpha < 1$ and let $\Xi_{\varepsilon} =
  (\xi_{\varepsilon}, \Xi_{2, \varepsilon}) : = (\xi_{\varepsilon},
  \vartheta_{\varepsilon} \circ \Delta \vartheta_{\varepsilon} -
  H_{\varepsilon})$. Then the family $(\Xi_{\varepsilon})_{\varepsilon}
  \subseteq \mathcal{X}^{\alpha}$ converges a.s. and in $L^p$ to a random
  element $\Xi = (\xi, \Xi_2) \in \mathcal{X}^{\alpha}$.
\end{theorem}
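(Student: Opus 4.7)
The approach exploits a key scaling observation: since $\vartheta_{\varepsilon}(\eta) = (-\eta\Delta)^{-1}\xi_{\varepsilon} = \eta^{-1}\vartheta_{\varepsilon}(1)$, one immediately has $H_{\varepsilon}(\eta) = \eta^{-2} H_{\varepsilon}(1)$ and hence
\[ \Xi_{2,\varepsilon}(\eta) = \eta^{-2}\, \Xi_{2,\varepsilon}(1). \]
The parametric dependence on $\eta \in [\lambda,1]$ is therefore an explicit polynomial in $\eta^{-1}$, and all $C^{2}_{\eta}\CC^{2\alpha-2}$ bounds reduce (with a constant depending only on $\lambda$) to bounds on $\Xi_{2,\varepsilon}(1)$ in $\CC^{2\alpha-2}$. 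Convergence of the first component $\vartheta_{\varepsilon}$ in $C^{2}_{\eta}\CC^{\alpha}$ then amounts to the classical statement $\xi_{\varepsilon} \to \xi$ in $\CC^{\alpha-2}$ combined with the Schauder bound~\eqref{eq:vartheta-est}. The real work is the convergence of the second-order object $Y_{\varepsilon} \assign \vartheta_{\varepsilon}(1) \circ \Delta\vartheta_{\varepsilon}(1) - H_{\varepsilon}(1)$ in $\CC^{2\alpha-2}$.

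Next I would expand in Fourier/Wiener chaos. Writing $\vartheta_{\varepsilon}(1) = \sum_{k\neq 0} \hat\psi_{\varepsilon}(k)|k|^{-2} \hat\xi_{k} e_{k}$, the resonant product $\vartheta_{\varepsilon}\circ\Delta\vartheta_{\varepsilon}$ expands as a sum over pairs $(k_1,k_2)$ whose Littlewood--Paley bands lie in the diagonal strip $i\sim j$. The diagonal $k_1 = -k_2$ contribution is precisely $\mathbb{E}[\vartheta_{\varepsilon}\circ\Delta\vartheta_{\varepsilon}] = H_{\varepsilon}(1)$, so $Y_{\varepsilon}$ belongs entirely to the second homogeneous Wiener chaos. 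A direct second-moment computation on a Littlewood--Paley block yields, for every $\delta > 0$,
\[ \mathbb{E}\,|\Delta_{q} Y_{\varepsilon}(x)|^{2} \lesssim 2^{-2(2\alpha - 2 - \delta)q}, \]
uniformly in $x$ and $\varepsilon$, and the analogous estimate for $Y_{\varepsilon} - Y_{\varepsilon'}$ gains an extra factor $(\varepsilon \vee \varepsilon')^{\kappa}$ for some $\kappa > 0$, coming from $|\hat\psi_{\varepsilon}(k) - \hat\psi_{\varepsilon'}(k)|$ and restricted to the low-$|k|$ regime.

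Hypercontractivity on the second chaos upgrades these $L^{2}$ estimates to arbitrary $L^{p}$ with the same decay rates, and a Besov/Kolmogorov criterion of the type used in~\cite{gubinelli_paracontrolled_2015} then places $Y_{\varepsilon} - Y_{\varepsilon'}$ in $\CC^{2\alpha - 2 - 2\delta}$ with $L^{p}$-norm bounded by $C(\varepsilon \vee \varepsilon')^{\kappa}$. A Borel--Cantelli argument along the geometric subsequence $\varepsilon_{n} = 2^{-n}$ upgrades $L^{p}$ Cauchyness to almost sure convergence in $\CC^{2\alpha - 2 - 2\delta}$, hence in $\CC^{2\alpha - 2}$ after relabelling $\alpha$. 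Multiplying by $\eta^{-2}$ transfers the result to $C^{2}_{\eta}\CC^{2\alpha-2}$. Membership of the limit in $\mathcal{X}^{\alpha}$ is automatic: each $\Xi_{\varepsilon}$ equals $J(\vartheta_{\varepsilon}, H_{\varepsilon})$ with $\vartheta_{\varepsilon}$ smooth, hence lies in the image of $J$, and $\mathcal{X}^{\alpha}$ is by definition its closure.

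The main obstacle is the Fourier summation in the second-moment estimate: one must verify that the loss of $|k_{2}|^{2}$ from $\Delta\vartheta_{\varepsilon}$ is only partially compensated by the factors $|k_{i}|^{-2}$, producing an otherwise logarithmically divergent diagonal contribution that is exactly the symbol defining $H_{\varepsilon}$, and that off-diagonal summations indeed yield the claimed regularity $2\alpha-2$. Once this algebraic cancellation is pinned down, the remainder is a routine chaos/hypercontractivity/Besov-embedding argument.
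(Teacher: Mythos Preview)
Your proposal is correct, and for the second-chaos/hypercontractivity/Besov core it coincides with the paper's argument, which simply defers to the standard PAM construction in~\cite{gubinelli_paracontrolled_2015}. The genuine difference lies in how the $\eta$-regularity is obtained. You exploit the explicit factorisation $\vartheta_{\varepsilon}(\eta)=\eta^{-1}\vartheta_{\varepsilon}(1)$, whence $\Xi_{2,\varepsilon}(\eta)=\eta^{-2}\Xi_{2,\varepsilon}(1)$, so that $C^{2}_{\eta}$-smoothness is immediate from the smoothness of $\eta\mapsto\eta^{-2}$ on $[\lambda,1]$ and no further stochastic estimate is needed. The paper does not use this shortcut: it writes $\Xi_{2,\varepsilon}(\eta)$ as a Wick product, differentiates in $\eta$, and observes that $\partial_{\eta}^{n}\Xi_{2,\varepsilon}(\eta)$ is again a sum of Wick products of the same structure with some copies of $\vartheta_{\varepsilon}(\eta)$ replaced by $\partial_{\eta}^{k}\vartheta_{\varepsilon}(\eta)$, each of which has the same Besov regularity; one then reruns the PAM estimate for every such term. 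Your route is shorter and cleaner for this particular model, while the paper's approach has the advantage of carrying over verbatim to the matrix-valued and fully nonlinear settings of Sections~\ref{sec:gen-nonlinear}--\ref{sec:fully-generality}, where $\vartheta(\eta)$ no longer depends on $\eta$ through a scalar prefactor and your factorisation is unavailable.
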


\begin{proof}
  The proof is a mild modification of the proof for
  PAM~{\cite{gubinelli_paracontrolled_2015}}. In order to establish the
  required $C^2_{\eta} \CC^{2 \alpha - 2}_T$ regularity for $\Xi_2$ we follow
  the computations for the case where the diffusion coefficient is constant.
  We only have to discuss the additional regularity in the parameter $\eta$.
  In order to do so observe that
  \[ \Xi_{2, \varepsilon} (\eta) = \sum_{i \sim j} \llbracket \Delta_i
     \vartheta_{\varepsilon} (\eta) \Delta_j \Delta \vartheta_{\varepsilon}
     (\eta) \rrbracket \]
  where $\llbracket \rrbracket$ denotes the Wick product with respect to the
  Gaussian structure of $\xi$. Then we have
  \[ \partial_{\eta} \Xi_{2, \varepsilon} (\eta) = \sum_{i \sim j} \llbracket
     \Delta_i \partial_{\eta} \vartheta_{\varepsilon} (\eta) \Delta_j \Delta
     \vartheta_{\varepsilon} (\eta) \rrbracket + \sum_{i \sim j} \llbracket
     \Delta_i \vartheta_{\varepsilon} (\eta) \Delta_j \Delta \partial_{\eta}
     \vartheta_{\varepsilon} (\eta) \rrbracket, \]
  and
  \begin{eqnarray*}
    \partial_{\eta}^2 \Xi_{2, \varepsilon} (\eta) & = & \sum_{i \sim j}
    \llbracket \Delta_i \partial_{\eta}^2 \vartheta_{\varepsilon} (\eta)
    \Delta_j \Delta \vartheta_{\varepsilon} (\eta) \rrbracket + \sum_{i \sim
    j} \llbracket \Delta_i \vartheta_{\varepsilon} (\eta) \Delta_j \Delta
    \partial_{\eta}^2 \vartheta_{\varepsilon} (\eta) \rrbracket\\
    &  & + \sum_{i \sim j} 2 \llbracket \Delta_i \partial_{\eta}
    \vartheta_{\varepsilon} (\eta) \Delta_j \Delta \partial_{\eta}
    \vartheta_{\varepsilon} (\eta) \rrbracket .
  \end{eqnarray*}

  Now the computations relative to the regularities of these additional
  stochastic objects are equivalent to those for the term $\Xi_{2,
  \varepsilon}$ where one or two instances of $\vartheta_{\varepsilon} (\eta)$
  are replaced by Gaussian fields of similar regularities of the form
  $\partial_{\eta} \vartheta_{\varepsilon} (\eta)$ and $\partial_{\eta}^2
  \vartheta_{\varepsilon} (\eta)$. A direct inspection of the proof contained
  in~{\cite{gubinelli_paracontrolled_2015}} allows us to deduce that we have
  almost sure $\CC^{2 \alpha - 2}$ regularity for these terms and also for
  random fields $\partial_{\eta}^n \Xi_{2, \varepsilon}$ for any finite $n$.
  This allows also to deduce that the random field is almost surely smooth in
  the parameter $\eta$. Similar computations allow to prove continuity in
  $\varepsilon$ for $\varepsilon > 0$. The rest of the proof is standard.
\end{proof}

In conclusion we see that in order to be able to use this convergence result
we need to modify our approximate PDE and consider instead
\[ \partial_t u_{\varepsilon} - a (u_{\varepsilon}) \Delta u_{\varepsilon} =
   \xi_{\varepsilon} - a' (u_{\varepsilon}) H_{\varepsilon} (a
   (u_{\varepsilon})) \]
which gives the renormalised equation (\ref{eq:ur-formulation-approx}).

Our well--posedness results for the paracontrolled formulation of this
equation together with the convergence result of
Theorem~\ref{th:conv-white-noise} allow to deduce that $u_{\varepsilon}
\rightarrow u$ in $\CC^{\delta}_T$ for any $2 / 3 < \delta < \alpha < 1$ and
that the limiting process $u$ satisfies a modified version of
eq.~(\ref{eq:ur-formulation}), namely
\[ \partial_t u - a (u) \diamond \Delta u = \xi, \qquad u (0) = u_0, \]
where $a (u) \diamond \Delta u$ denotes a \tmtextit{renormalized} diffusion
term given by
\begin{equation}
  a (u) \diamond \Delta u \assign a (u) \prec \Delta u + a' (u)
  \Pi_{\diamondsuit} (a (u), \Xi_2) + \Phi_1 (u) + \Phi_2 (u) .
  \label{eq:ren-diff}
\end{equation}
\section{Nonlinear source terms}\label{sec:gen-nonlinear}

Let us start by discussing the presence of a $u$ dependent r.h.s. in
eq.~(\ref{eq:ur-formulation}). We want to solve
\[ \partial_t u - a_1 (u) \Delta u = a_2 (u) \xi \]
where $a_1$ is a non-linear diffusion coefficient as before and $a_2 :
\mathbbm{R} \rightarrow \mathbbm{R}$ is another bounded function with
sufficiently many bounded derivatives. We rewrite this equation as
\[ \Pi_{\prec} \left( a (u), \LL \right) u = a_2 (u) \prec \xi + a_1 (u) \circ
   \Delta u + a_2 (u) \circ \xi + a_1 (u) \succ \Delta u + a_2 (u) \succ \xi
\]
where now $a (u) = (a_1 (u), a_2 (u))$ is a vector valued non-linearity. Since
we don't need $u$ to depend on any parameter $\eta = (\eta_1, \eta_2)$, we
have defined $\LL$ as
\[ \LL (\eta) \assign \partial_t - \eta_1 \Delta \]
and used the identity $\Pi_{\prec} \left( a (u), \LL \right) u = (\partial_t -
a_1 (u) \prec \Delta) u$, similarly to what we have done in
(\ref{e:paradiff-oper}).

Notice that the non--linear paraproduct can be extended trivially to the
vector valued case in such a way that, for example,
\[ \Pi_{\precprec} ((g_1, g_2), h) (t, x) = \sum_i \int_{y, s} Q_{i, t} (s)
   P_{i, x} (y) (\Delta_i h ((g_1 (s, y), g_2 (s, y)), t, \cdot)) (x) . \]
As before we make the Ansatz
\[ u = \Pi_{\precprec} (a (u), \vartheta) + u^{\sharp} \]
where now $\vartheta$ solves
\[ \LL (\eta) \vartheta (\eta) = (\partial_t - \eta_1 \Delta) \vartheta (\eta)
   = \eta_2 \xi, \]
for $\eta = (\eta_1, \eta_2) \in [\lambda, 1] \times [- L, L]$ where $L$ is a
large but fixed constant. The bounded domain is important to be able to have
uniform estimates and reuse the estimates proved above in the simple situation
of $\eta_2 = 1$. The solution of this equation is
\[ \vartheta (\eta, \cdot) = \eta_2 \int_0^{\infty} e^{\eta_1 \Delta s} \xi
   \mathd s = - \frac{\eta_2}{\eta_1} \Delta^{- 1} \xi . \]
Observe that
\[ \Pi_{\prec} \left( a (u), \LL \right) u = \Pi_{\prec} \left( a (u), \LL
   \right) \Pi_{\precprec} (a (u), \vartheta) + \Pi_{\prec} \left( a (u), \LL
   \right) u^{\sharp} \]
and recall that by Lemma~\ref{l:def-psi}
\[ \Pi_{\prec} \left( a (u), \LL \right) \Pi_{\precprec} (a (u), \vartheta) =
   \Pi_{\precprec} \left( a (u), \LL \vartheta \right) + \Psi (a (u),
   \vartheta) . \]
Now
\[ \left( \LL \vartheta \right) (\eta) = (\partial_t - \eta_1 \Delta)
   \vartheta (\eta, t, x) = \Xi (\eta), \qquad \eta = (\eta_1, \eta_2) \in
   [\lambda, 1] \times [- L, L] \]
with $\Xi (\eta) (t, x) = \eta_2 \xi (x)$ and then
\[ \Pi_{\precprec} \left( a (u), \LL \vartheta \right) = \Pi_{\precprec} (a
   (u), \Xi) = a_2 (u) \precprec \xi . \]
In conclusion
\[ \Pi_{\prec} \left( a (u), \LL \right) \Pi_{\precprec} (a (u), \vartheta) =
   a_2 (u) \precprec \xi + \Psi (a (u), \vartheta) \]
and the equation for $u^{\sharp}$ reads
\begin{eqnarray}
  \Pi_{\prec} \left( a (u), \LL u^{\sharp} \right) & = & a_1 (u) \circ \Delta
  u + a_2 (u) \circ \xi + [a_2 (u) \prec \xi - a_2 (u) \precprec \xi]
  \nonumber\\
  &  & + a_1 (u) \succ \Delta u + a_2 (u) \succ \xi - \Psi (a (u), \vartheta)
  \nonumber
\end{eqnarray}
where now all the terms on the r.h.s. can be considered remainder terms. Let
us just remark that the commutation term $a_2 (u) \prec \xi - a_2 (u)
\precprec \xi$ can be handled easily via Lemma~\ref{l:comm-paraproduct}. Of
course, the first two terms in the equation above require to be treated as
resonant terms. Note that, modulo terms of order $\CC^{3 \alpha - 2}_T$ (or
$\mathcal{E}^{\alpha / 2} \CC^{2 \alpha - 2}$ as defined in
Lemma~\ref{l:schauder}) the terms $a_1 (u) \circ \Delta u + a_2 (u) \circ \xi$
are equivalent to
\[ a'_1 (u) \Pi_{\diamondsuit} (a (u), \vartheta \circ \Delta \vartheta) +
   a'_2 (u) (\Pi_{\precprec} (a (u), \vartheta) \circ \xi) \]
and that by computations similar to those of the previous sections one can
prove that
\[ (\Pi_{\precprec} (a (u), \vartheta) \circ \xi) = \Pi_{\diamondsuit} (a (u),
   \vartheta \circ \xi) + \CC^{3 \alpha - 2}_T \]
so the resonant terms are comparable to the sum of the two terms
\[ a'_1 (u) \Pi_{\diamondsuit} (a (u), \vartheta \circ \Delta \vartheta) +
   a'_2 (u) \Pi_{\diamondsuit} (a (u), \vartheta \circ \xi) \]
which require renormalization of the form
\begin{equation}
  \frac{a'_1 (u) a_2 (u)^2}{a_1 (u)^2} \sigma_{\varepsilon} - \frac{a'_2 (u)
  a_2 (u)}{a_1 (u)} \sigma_{\varepsilon} \label{eq:renorm-non-linear-rhs}
\end{equation}
and the convergence follows with the same arguments of Section~\ref{s:renorm}.

We remark that the structure of the second renormalisation term, which is due
to the r.h.s. in the equation, is the same of that found by Bailleul,
Debussche and Hofmanov{\'a} in~{\cite{hofmanova}}.

\begin{remark}
  \label{r:vector-form}Our approach works straightforwardly for
  equation~(\ref{e:matrix-valued-coeff}), namely
  \[ \partial_t u (t, x) - a_{i j} (u (t, x)) \partial^2_{i j} u (t, x) = g (u
     (t, x)) \xi \]
  with $a : \mathbbm{R} \rightarrow M_2 (\mathbbm{R})$ such that $\sum_{i, j}
  a (u)_{i j} x_i x_j \geqslant C | x |^2$ $\forall x \in \mathbbm{R}^2$ for
  $C > 0$ and $\partial^2_{i j} \assign \frac{\partial^2}{\partial x_i
  \partial x_j}$.
  
  To see that, let $a (u) \assign (a_{i j} (u), g (u)) \in \mathbbm{R}^5$ and
  $\eta = (\eta_{i, j}, \eta_g) \in \mathbbm{R}^5$. Let $\LL (\eta) \assign
  \partial_t - \eta_{i j} \partial^2_{i j}$ and $\Xi (\eta) \assign \eta_g
  \xi$ with the uniform ellipticity condition $\sum_{i, j} \eta_{i j} x_i x_j
  \geqslant C | x |^2$ $\forall x \in \mathbbm{R}^2$. It is easy to verify
  that Lemma~\ref{l:def-psi} and Lemma~\ref{l:nonlin-comm} hold within this
  setting, just considering nonlinear paraproducts for functions depending on
  5 parameters. We have then:
  \[ u = \Pi_{\precprec} \left( a (u), \vartheta + U^{\sharp} + \PP
     u_0^{\sharp} \right) \]
  with $\vartheta (\eta)$ stationary solution of $\LL \vartheta (\eta) = \Xi
  (\eta)$, $\PP_t u_0^{\sharp} \assign e^{\eta_{i j} \partial^2_{i j} t}
  u_0^{\sharp}$ and $U^{\sharp} (\eta)$ which solves
  \[ \LL U^{\sharp} (\eta) = \Pi_{\diamondsuit} ((a (u), a' (u)), \Theta_1) +
     \Pi_{\diamondsuit} ((a (u), a' (u)), \Theta_2) + Q (u, U^{\sharp}) \]
  with $Q (u, U^{\sharp}) \in \CC^{2 \alpha - 2 - \varepsilon}$, $\Theta_1 \in
  C^2_{\eta} C^2_{\eta'} \CC^{2 \alpha - 2} = \vartheta (\eta) \circ \eta'_{i
  j} \partial^2_{i j} \vartheta (\eta)$, $\Theta_2 (\eta, \eta') = \vartheta
  (\eta) \circ \eta'_g \xi$ and $U^{\sharp} (t = 0) = 0$. Note that we can
  write $\vartheta$ as
  \[ \vartheta (\eta) = \eta_g \int_0^{\infty} e^{t \eta_{i j} \partial^2_{i
     j}} \xi \mathd t \qquad \hat{\vartheta} (k) = \eta_g \frac{\hat{\xi}
     (k)}{\eta_{i j} k_i k_j}, \quad k \in \mathbbm{Z}^2 \backslash \{ 0 \} .
  \]
  From the uniform ellipticity condition we have $\| \vartheta \|_{C_{\eta}^k
  \CC^{\alpha}} \lesssim \| \xi \|_{\CC^{\alpha}}$, and Schauder estimates
  analogous to those of Lemma~\ref{l:schauder} hold as well.
  
  Now consider the renormalization. We have
  \begin{eqnarray*}
    H_1^{\varepsilon} (\eta, \eta') \assign \mathbbm{E} (\Theta_1 (\eta,
    \eta')) & = & - \eta_g^2 \sum_{k \in \mathbbm{Z}^2 \backslash \{ 0 \}}
    \hat{\psi}_{\varepsilon} (k)^2 \frac{\sum_{i, j} \eta'_{i j} k_i
    k_j}{\left( \sum_{i, j} \eta_{i j} k_i k_j \right)^2},\\
    H_2^{\varepsilon} (\eta, \eta') \assign \mathbbm{E} (\Theta_2 (\eta,
    \eta')) & = & \eta_g \eta'_g \sum_{k \in \mathbbm{Z}^2 \backslash \{ 0 \}}
    \frac{\hat{\psi}_{\varepsilon} (k)^2}{\sum_{i, j} \eta_{i j} k_i k_j} .
  \end{eqnarray*}
  The convergence of $\Theta_1^{\varepsilon} - H_1^{\varepsilon}$,
  $\Theta_2^{\varepsilon} - H_2^{\varepsilon}$ in $C_{(\eta, \eta')}^k \CC^{2
  \alpha - 2} (\mathbbm{T}^2)$ can be obtained with the techniques used in
  {\cite{gubinelli_paracontrolled_2015}}, Section 5.2. 
\end{remark}

\section{Full generality}\label{sec:fully-generality}

Within the framework of the present work we are actually able to treat
equations of the form
\begin{equation}
  \partial_t u (t, x) - a_1 (u (t, x)) \Delta u (t, x) = \xi (a_2 (u (t, x)),
  x) \label{eq:general}
\end{equation}
where $\xi (\eta_2, x)$ is a Gaussian process with covariance
\[ \mathbbm{E} [\xi (\eta_2, x) \xi (\tilde{\eta}_2, \tilde{x})] = F (\eta_2,
   \tilde{\eta}_2) \delta (x - \tilde{x}) \]
where $F$ is a smooth covariance function. Let as before $2 / 3 < \alpha < 1$.
In this case we can take as a parametric equation
\[ \LL (\eta) \vartheta \assign \partial_t \vartheta (\eta, t, x) - \eta_1
   \Delta \vartheta (\eta, t, x) = \xi (\eta_2, x) \]
whose solution $\vartheta$ is a Gaussian process, smooth with respect to the
variable $\eta = (\eta_1, \eta_2)$ which we assume taking value in a compact
subset of $\mathbbm{R}^2$ for which $\eta_1 \geqslant \lambda > 0$ with fixed
$\lambda$. Letting $a (u) = (a_1 (u), a_2 (u))$ we can rewrite the l.h.s. of
eq.~(\ref{eq:general}) in the form
\[ \partial_t u - a_1 (u) \Delta u = \Pi_{\diamondsuit} \left( a (u), \LL u
   \right) \]
and the r.h.s. as
\[ \xi (a_2 (u (t, x)), x) = \Pi_{\diamondsuit} (a (u), \Xi) \]
where $\Xi (\eta, x) = \xi (\eta_2, x)$. Now we perform the paraproduct
decomposition to get
\begin{eqnarray*}
  \Pi_{\prec} \left( a (u), \LL u \right) - \Pi_{\prec} (a (u), \Xi) & = &
  \Pi_{\circ} (a (u), \Xi) + \Pi_{\circ} \left( a (u), \DF u \right)\\
  &  & + \Pi_{\succ} (a (u), \Xi) + \Pi_{\succ} \left( a (u), \DF u \right) .
\end{eqnarray*}
We have introduced here the parametric differential operator $\DF (\eta)
\assign \eta_1 \Delta$ for $\eta = (\eta_1, \eta_2)$.

Let $\PP_t (\eta) \assign e^{t \eta_1 \Delta}$ as before, and invoke the
paracontrolled Ansatz in the usual form
\[ u = \Pi_{\precprec} \left( a (u), \vartheta + U^{\sharp} + \PP
   u^{\sharp}_0 \right) . \]
Using that
\begin{eqnarray*}
  \Pi_{\prec} \left( a (u), \LL \Pi_{\precprec} \left( a (u), \vartheta +
  U^{\sharp} + \PP u^{\sharp}_0 \right) \right) & = & \Pi_{\precprec} \left( a
  (u), \LL \left( \vartheta + U^{\sharp} + \PP u_0^{\sharp} \right) \right)\\
  &  & + \Psi \left( a (u), \vartheta + U^{\sharp} + \PP u^{\sharp}_0 \right)
\end{eqnarray*}
and observing that we can take $\LL \vartheta = \Xi$ and that $\LL \PP
u^{\sharp}_0 = 0$ to get
\[ \Pi_{\precprec} \left( a (u), \LL U^{\sharp} \right) = F (u, U^{\sharp}) \]
where
\begin{eqnarray*}
  F (u, U^{\sharp}) & = & \Pi_{\circ} (a (u), \Xi) + \Pi_{\circ} \left( a (u),
  \DF u \right) + \Pi_{\succ} (a (u), \Xi) + \Pi_{\succ} \left( a (u), \DF u
  \right)\\
  &  & + [\Pi_{\prec} (a (u), \Xi) - \Pi_{\precprec} (a (u), \Xi)] - \Psi
  \left( a (u), \vartheta + U^{\sharp} + \PP u^{\sharp}_0 \right)
\end{eqnarray*}
which is solved by $U^{\sharp}$ satisfying
\[ \LL U^{\sharp} = F (u, U^{\sharp}) . \]
Indeed $\Pi_{\prec} (a (u), F (u, U^{\sharp})) = F (u, U^{\sharp})$, since $F
(u, U^{\sharp})$ does not depend on the additional parameter. Remark that the
term $\Pi_{\prec} (a (u), \Xi) - \Pi_{\precprec} (a (u), \Xi)$, which does not
appear in the simpler case, can be treated with
Lemma~\ref{l:comm-nonlin-para}.

It remains now to discuss the handling of the resonant products under the
paracontrolled assumption, namely $\Pi_{\circ} (a (u), \Xi)$ and $\Pi_{\circ}
\left( a (u), \DF u \right)$. Next lemma is a paralinearization result adapted
to our non-linear context.

\begin{lemma}
  Assume that $u \in \CC_T^{\rho}$ and $Z \in C_{\eta}^2 \CC^{\gamma}_T$ then
  if $\gamma + 2 \rho > 0$ we have
  \[ C (u, Z) \assign \Pi_{\circ} (a (u), Z) - u \circ \Pi_{\prec} ((a (u), a'
     (u)), \mathcal{D}Z) \in \CC_T^{\gamma + 2 \rho} \]
  where $\mathcal{D}_{} Z ((\eta, \eta'), t, x) \assign \sum_i \eta'_i
  \partial_{\eta_i} Z (\eta, t, x)$.
\end{lemma}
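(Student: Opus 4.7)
The plan is to adapt the classical Bony paralinearization to the nonlinear paraproduct setting via a second-order Taylor expansion of the scalar composition $s \mapsto Z(a(s), t, z)$ around $s = u(t, x)$. For smooth arguments I would write
\[
Z(a(u(t,y)), t, z) = Z(a(u(t,x)), t, z) + \mathcal{D}Z((a(u(t,x)), a'(u(t,x))), t, z)(u(t,y) - u(t,x)) + \mathcal{R}(t, x, y, z),
\]
where $\mathcal{R}$ is the second-order Taylor remainder satisfying $|\Delta_q^{(z)} \mathcal{R}| \lesssim 2^{-\gamma q} \|Z\|_{C^2_\eta \CC^\gamma_T}(1 + \|a\|_{C^2})^2 |u(t,y) - u(t,x)|^2$, and then substitute this decomposition into $\Pi_\circ(a(u), Z)(t, x) = \sum_{k \sim q} \int K_{k,x}(y) K_{q,x}(z) Z(a(u(t,y)), t, z)$.

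Next I would analyze the three resulting contributions. The zeroth-order term $Z(a(u(t,x)), t, z)$ is independent of $y$, so $\int K_{k,x}(y)\,dy = 0$ for $k \geq 0$ annihilates it up to a smooth low-frequency piece. Using $\int K_{k,x}(y)(u(t,y) - u(t,x))\,dy = \Delta_k u(t,x)$, the first-order term becomes
\[
\sum_{k \sim q} \Delta_k u(t,x) \cdot \Delta_q\bigl[\mathcal{D}Z((a(u(t,x)), a'(u(t,x))), t, \cdot)\bigr](x).
\]
The second-order remainder $\sum_{k \sim q} \int K_{k,x}(y) K_{q,x}(z) \mathcal{R}$ has its $k$-th Littlewood--Paley block bounded, using $|u(t,y) - u(t,x)|^2 \lesssim |y - x|^{2\rho} \|u\|_{\CC^\rho_T}^2$ against the scale-$2^{-k}$ kernel $K_{k,x}$, by $2^{-(2\rho + \gamma) k} \|u\|_{\CC^\rho_T}^2 \|Z\|_{C^2_\eta \CC^\gamma_T}$, giving the desired $\CC^{\gamma + 2\rho}_T$ regularity.

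Then I would compare the first-order term with $u \circ \Pi_\prec((a(u), a'(u)), \mathcal{D}Z)(t, x) = \sum_{k \sim q} \Delta_k u(t,x) \Delta_q \Pi_\prec(\ldots)(t,x)$. By the definition of $\Pi_\prec$, up to a harmless frequency truncation
\[
\Delta_q \Pi_\prec((a(u), a'(u)), \mathcal{D}Z)(t, x) = \int P_{q,x}(y') \Delta_q\bigl[\mathcal{D}Z((a(u(t,y')), a'(u(t,y'))), t, \cdot)\bigr](x)\,dy'.
\]
A further first-order Taylor expansion of the parameter dependence $y' \mapsto \mathcal{D}Z((a(u(t,y')), a'(u(t,y'))), t, z)$, combined with the H\"older regularity of $u$ and smoothness of $a$, bounds the difference from the pointwise evaluation at $y' = x$ by $\|u\|_{\CC^\rho_T} \|Z\|_{C^2_\eta \CC^\gamma_T} |y' - x|^\rho$. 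Against $P_{q,x}$ at scale $2^{-q}$ this produces an extra $2^{-\rho q}$, so the discrepancy block on the paradiagonal $k \sim q$ is of order $2^{-(\gamma + 2\rho) k}$, again yielding $\CC^{\gamma + 2\rho}_T$.

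The main obstacle I expect is the careful bookkeeping in the third step: establishing cleanly that the nonlinear paraproduct $\Pi_\prec((a(u), a'(u)), \mathcal{D}Z)$, when restricted to its frequency-$q$ block, agrees with the pointwise-evaluated object up to $O(2^{-(\gamma + \rho) q})$. This parallels the computations in Lemma~\ref{l:para-increment} and the nonlinear commutator Lemma~\ref{l:nonlin-comm}, but requires tracking the derivative $\mathcal{D}$ acting on the $\eta$-dependence of $Z$ together with the chain rule $\partial_s \bigl[Z(a(s), t, z)\bigr] = \mathcal{D}Z((a(s), a'(s)), t, z)$. Once these estimates are in hand, extension from smooth arguments to $u \in \CC^\rho_T$ and $Z \in C^2_\eta \CC^\gamma_T$ with $\gamma + 2\rho > 0$ follows by density along the lines of the preceding lemmas.
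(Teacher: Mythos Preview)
Your approach is correct and rests on the same idea as the paper's proof: a second-order Taylor expansion of the parameter dependence $s \mapsto Z(a(s),t,z)$, with the quadratic remainder providing the $\CC_T^{\gamma+2\rho}$ gain since $|u(t,y)-u(t,\cdot)|^2 \lesssim 2^{-2\rho k}$ on the support of $K_{k,x}$.

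The difference is in the choice of base point. You expand around $u(t,x)$; this makes the linear term a pointwise evaluation $\sum_{k\sim q}\Delta_k u(t,x)\,\Delta_q[\mathcal{D}Z((a(u(t,x)),a'(u(t,x))),t,\cdot)](x)$, which you then have to compare with $u\circ\Pi_\prec((a(u),a'(u)),\mathcal{D}Z)$ by a second expansion --- the step you identify as the main obstacle. The paper instead first inserts an additional paraproduct decomposition $\sum_k P_{k,z}(z'')K_{k,z}(z')$ in the $z$-variable and expands around $u(t,z'')$, where $z''$ is the variable averaged by $P_{k,z}$. With that choice the linear term is \emph{exactly} $u\circ\Pi_\prec((a(u),a'(u)),\mathcal{D}Z)$ by definition of $\Pi_\prec$, so no comparison step is needed at all. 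Your route works and is perhaps more transparent, but the paper's choice of base point sidesteps precisely the bookkeeping you were worried about.
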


\begin{proof}
  
  \begin{eqnarray*}
    &  & \Pi_{\circ} (a (u), Z) (t, x) = \sum_{i \sim j} \int_{y, z} K_{i, x}
    (y) K_{j, x} (z) Z (a (u (t, y)), t, z)\\
    & = & \sum_{i \sim j, k} \int_{\tmscript{_{\tmscript{\begin{array}{l}
      y, z'\\
      z, z''
    \end{array}}}}} K_{i, x} (y) K_{j, x} (z) P_{k, z} (z'') K_{k, z} (z') Z
    (a (u (t, y)), t, z')\\
    & = & \sum_{i \sim j, k} \int_{\tmscript{_{\tmscript{\begin{array}{l}
      y, z'\\
      z, z''
    \end{array}}}}} K_{i, x} (y) K_{j, x} (z) P_{k, z} (z'') K_{k, z} (z')
    \times\\
    &  & \times [Z (a (u (t, y)), t, z') - Z (a (u (t, z'')), t, z')]\\
    & = & \sum_{i \sim j, k} \int_{\tmscript{_{\tmscript{\begin{array}{l}
      y, z'\\
      z, z''
    \end{array}}}}} K_{i, x} (y) K_{j, x} (z) P_{k, z} (z'') K_{k, z} (z')
    \times\\
    &  & \times [\sum_{\ell} a'_{\ell} (u (t, z'')) \delta u^{t y}_{t z''}
    \partial_{a_{\ell}} Z (a (u (t, z'')), t, z')]\\
    &  & + \sum_{\tmscript{_{\tmscript{\begin{array}{c}
      i \sim j\\
      k \sim j
    \end{array}}}}} \int_{\tmscript{_{\tmscript{\begin{array}{l}
      y, z'\\
      z, z''
    \end{array}}}}} K_{i, x} (y) K_{j, x} (z) P_{k, z} (z'') K_{k, z} (z')
    \times\\
    &  & \times O ((\delta u^{t y}_{t z''})^2) \partial_{\eta}^2 Z (a (u (t,
    y)), t, z')
  \end{eqnarray*}
  and observe that the first term is equal to $u \circ \Pi_{\prec} ((a (u), a'
  (u)), \mathcal{D}Z)$ while the second term can be easily estimated in
  $\CC_T^{\gamma + 2 \rho}$. 
\end{proof}

Using this result and Lemma~\ref{l:comm-nonlin-para} we can expand
\begin{eqnarray*}
  \Pi_{\circ} (a (u), \Xi) & = & u \circ \Pi_{\prec} ((a (u), a' (u)),
  \mathcal{D} \Xi) + \CC_T^{3 \alpha - 2}\\
  & = & \Pi_{\precprec} (a (u), \vartheta) \circ \Pi_{\prec} ((a (u), a'
  (u)), \mathcal{D} \Xi) + \CC_T^{3 \alpha - 2}\\
  & = & \Pi_{\diamondsuit} ((a (u), a' (u)), \vartheta \circ \mathcal{D} \Xi)
  + \CC_T^{3 \alpha - 2}
\end{eqnarray*}
and similarly, noting that
\begin{eqnarray*}
  \Pi_{\prec} \left( (a (u), a' (u)), \left( \mathcal{D} \DF \right) u \right)
  & = & \Pi_{\prec} \left( (a (u), a' (u)), \left( \mathcal{D} \DF \right)
  \Pi_{\precprec} (a (u), \vartheta) \right) + \CC_T^{3 \alpha - 2}\\
  & = & \Pi_{\prec} \left( (a (u), a' (u)), \left( \mathcal{D} \DF \right)
  \vartheta \right) + \CC_T^{3 \alpha - 2}
\end{eqnarray*}
where $\left( \mathcal{D} \DF \right) (\eta, \eta') = \eta_1' \Delta$, we have
\begin{eqnarray*}
  \Pi_{\circ} \left( a (u), \LL u \right) & = & u \circ \Pi_{\prec} \left( (a
  (u), a' (u)), \left( \mathcal{D} \DF \right) u \right) + \CC_T^{3 \alpha -
  2}\\
  & = & \Pi_{\precprec} (a (u), \vartheta) \circ \Pi_{\prec} \left( (a (u),
  a' (u)), \left( \mathcal{D} \DF \right) \vartheta \right) + \CC_T^{3 \alpha
  - 2}\\
  & = & \Pi_{\diamondsuit} \left( (a (u), a' (u)), \vartheta \circ \left(
  \mathcal{D} \DF \right) \vartheta \right) + \CC_T^{3 \alpha - 2}
\end{eqnarray*}

Finally the equation for $U^{\sharp}$ reads
\[ \LL U^{\sharp} = \Pi_{\diamondsuit} \left( (a (u), a' (u)), \vartheta \circ
   \mathcal{D} \Xi + \vartheta \circ \left( \mathcal{D} \DF \right) \vartheta
   \right) + \CC_T^{3 \alpha - 2} . \]
This can be solved essentially as we did in the simpler context. We see that
the general enhancement has the form
\[ \left( \xi, \vartheta \circ \mathcal{D} \Xi + \vartheta \circ \left(
   \mathcal{D} \DF \right) \vartheta \right) \]
which of course will require renormalization like we did before. In particular
\begin{eqnarray*}
  \left( \vartheta \circ \mathcal{D} \Xi + \vartheta \circ \left( \mathcal{D}
  \DF \right) \vartheta \right) (\eta, \eta') & = & \vartheta (\eta) \circ
  \eta'_2 \partial_{\eta_2} \xi (\eta_2, \cdot) + \vartheta (\eta) \circ
  \eta_1' \Delta \vartheta (\eta)\\
  & = & - \frac{\eta'_2}{\eta_1} (\Delta^{- 1} \xi (\eta_2, \cdot)) \circ
  \partial_{\eta_2} \xi (\eta_2, \cdot)\\
  &  & + \frac{\eta_1'}{\eta_1^2} (\Delta^{- 1} \xi (\eta_2, \cdot)) \circ
  \xi (\eta_2, \cdot)
\end{eqnarray*}
where we used that $\eta_1 \Delta \vartheta (\eta) = - \xi (\eta_2, \cdot)$.
Now observe that
\[ \mathbbm{E} [(\Delta^{- 1} \xi_{\varepsilon} (\eta_2, \cdot)) \circ
   \xi_{\varepsilon} (\eta_2, \cdot)] = - F (\eta_2, \eta_2)
   \sigma_{\varepsilon} \]
and that
\[ \mathbbm{E} [(\Delta^{- 1} \xi_{\varepsilon} (\eta_2, \cdot)) \circ
   \partial_{\eta_2} \xi_{\varepsilon} (\eta_2, \cdot)] = - (\partial_1 F)
   (\eta_2, \eta_2) \sigma_{\varepsilon} \]
with $\partial_1 F$ denoting the derivative with respect to the first entry.

In the end the renormalised enhanced noise is obtained as the limit in
$\mathcal{X}^{\alpha}$ of $(\xi_{\varepsilon}, \Xi_{2, \varepsilon})$ where
\[ \Xi_{2, \varepsilon} (\eta, \eta') = - \frac{\eta'_2}{\eta_1} (\Delta^{-
   1} \xi_{\varepsilon} (\eta_2, \cdot)) \circ \partial_{\eta_2}
   \xi_{\varepsilon} (\eta_2, \cdot) + \frac{\eta_1'}{\eta_1^2} (\Delta^{- 1}
   \xi_{\varepsilon} (\eta_2, \cdot)) \circ \xi_{\varepsilon} (\eta_2, \cdot)
   - H_{\varepsilon} (\eta, \eta') \]
with
\[ H_{\varepsilon} (\eta, \eta') = \frac{\eta'_2}{\eta_1} (\partial_1 F)
   (\eta_2, \eta_2) \sigma_{\varepsilon} - \frac{\eta_1'}{\eta_1^2} F (\eta_2,
   \eta_2) \sigma_{\varepsilon} . \]

We remark that if we take $F (\eta_2, \tilde{\eta}_2) = \eta_2 \tilde{\eta}_2$
we obtain again the situation treated in Section~\ref{sec:gen-nonlinear}.
Indeed in this case
\[ \Pi_{\diamondsuit} ((a (u), a' (u)), H_{\varepsilon}) = \frac{a_2' (u) a_2
   (u)}{a_1 (u)} \sigma_{\varepsilon} - \frac{a'_1 (u) a_2 (u)^2}{a_1 (u)^2}
   \sigma_{\varepsilon} . \]
which coincides with~(\ref{eq:renorm-non-linear-rhs}).

\begin{remark}
  Consider the more general equation (\ref{eq:general-model}), where the noise
  depends explicitly on time, e.g. with a covariance
  \[ \mathbbm{E} [\xi (\eta, t, x) \xi (\eta', t', x')] = F (\eta, \eta') Q (t
     - t', x - x') \]
  with $F$ a smooth function and $Q$ a distribution of parabolic regularity
  $\rho > - 4 / 3$. First note that the coefficient $a_1 (u) \in [\lambda, 1]$
  in front of the time derivative can be eliminated trivially by dividing.
  
  In order to handle the time dependence of the noise, the framework of this
  paper will still apply, provided we consider space--time paraproducts
  instead of paraproducts which act only on the space variable. This can be
  done exactly following the lines of the
  paper~{\cite{gubinelli_paracontrolled_2015}}, where time paraproducts are
  employed in the paracontrolled approach to solutions to SDE driven by
  gaussian signals.
  
  The constraint of regularity $\rho > - 4 / 3$ does allow to treat a noise
  which is white in time and smooth in space, but not a space--time white
  noise. It is well known that the first order paracontrolled approach on
  which the present paper is based does not allow to treat this kind of
  irregular signals in full generality.
\end{remark}

\appendix\section{Besov spaces and linear paraproducts}\label{appendix:besov}

In this Appendix we collect some classical results from harmonic analysis
needed in the paper. For a gentle introduction to Littlewood-Paley theory and
Besov spaces see the recent monograph~{\cite{bahouri_fourier_2011}}, where
most of our results are taken from. There the case of tempered distributions
on $\mathbbm{R}^d$ is considered. The Schauder estimates for the heat
semigroup (Lemma~\ref{l:schauder}) are classical and can be found
in~{\cite{gubinelli_paracontrolled_2015,gubinelli_lectures_2015}}.

\

Fix $d \in \mathbbm{N}$ and denote by $\mathbbm{T}^d = (\mathbbm{R}/ (2 \pi
\mathbbm{Z}))^d$ the $d$-dimensional torus. We focus here on distributions and
SPDEs on the torus, but everything in this Appendix applies mutatis-mutandis
on the full space $\mathbbm{R}^d$, see~{\cite{gubinelli_paracontrolled_2015}}.
The space of distributions $\CD' = \CD' (\mathbbm{T}^d)$ is defined as the set
of linear maps $f$ from $C^{\infty} = C^{\infty} (\mathbbm{T}^d, \mathbbm{C})$
to $\mathbbm{C}$, such that there exist $k \in \mathbbm{N}$ and $C > 0$ with
\[ | \langle f, \varphi \rangle | \assign | f (\varphi) | \leqslant C \sup_{|
   \mu | \leqslant k} \| \partial^{\mu} \varphi \|_{L^{\infty}
   (\mathbbm{T}^d)} \]
for all $\varphi \in C^{\infty}$. In particular, the Fourier transform $\CF f
: \mathbbm{Z}^d \rightarrow \mathbbm{C}$, $\CF f (k) = \langle f, e^{- i k
\cdummy} \rangle$, is defined for all $f \in \CD'$, and it satisfies $\left|
\CF f (k) \right| \leqslant | P (k) |$ for a suitable polynomial $P$. We will
also write $\hat{f} (k) = \CF f (k)$. Conversely, if $(g (k))_{k \in
\mathbbm{Z}^d}$ is at most of polynomial growth, then its inverse Fourier
transform
\[ \CF^{- 1} g = (2 \pi)^{- d} \sum_{k \in \mathbbm{Z}^d} e^{i \langle k,
   \cdummy \rangle} g (k) \]
defines a distribution, and we have $\CF^{- 1} \CF f = f$ as well as $\CF
\CF^{- 1} g = g$. To see this, it suffices to note that the Fourier transform
of $\varphi \in C^{\infty}$ decays faster than any rational function (we say
that it is of \tmtextit{rapid decay}). Indeed, for $\mu \in \mathbbm{N}^d_0$
we have $| k^{\mu} \hat{g} (k) | = \left| \CF (\partial^{\mu} g) (k) \right|
\leqslant \| \partial^{\mu} g \|_{L^1 (\mathbbm{T}^d)}$ for all $k \in
\mathbbm{Z}^d$. As a consequence we get the Parseval formula $\left\langle f,
\overline{\varphi} \right\rangle = (2 \pi)^{- d} \sum_k \hat{f} (k)
\overline{\hat{\varphi} (k)}$ for $f \in \CD'$ and $\varphi \in C^{\infty}$.

Linear maps on $\CD'$ can be defined by duality: if $A : C^{\infty}
\rightarrow C^{\infty}$ is such that for all $k \in \mathbbm{N}$ there exists
$n \in \mathbbm{N}$ and $C > 0$ with $\sup_{| \mu | \leqslant k} \|
\partial^{\mu} (A \varphi) \|_{L^{\infty}} \leqslant C \sup_{| \mu | \leqslant
n} \| \partial^{\mu} \varphi \|$, then we set $\langle^t A f, \varphi \rangle
= \langle f, A \varphi \rangle$. Differential operators are defined by
$\langle \partial^{\mu} f, \varphi \rangle = (- 1)^{| \mu |} \langle f,
\partial^{\mu} \varphi \rangle$. If $\varphi : \mathbbm{Z}^d \rightarrow
\mathbbm{C}$ grows at most polynomially, then it defines a \tmtextit{Fourier
multiplier}
\[ \varphi (\mathD) f = \CF^{- 1} \left( \varphi \CF f \right), \]
which gives us a distribution $\varphi (\mathD) f \in \CD'$ for every $f \in
\CD'$.

\

Littlewood-Paley blocks give a decomposition of any distribution on $\CD'$
into an infinite series of smooth functions.

\begin{definition}
  A \tmtextit{dyadic partition of unity} consists of two nonnegative radial
  functions $\chi, \rho \in C^{\infty} (\mathbbm{R}^d, \mathbbm{R})$, where
  $\rho$ is supported in a ball $\CB = \{ | x | \leqslant c \}$ and $\rho$ is
  supported in an annulus $\CA = \{ a \leqslant | x | \leqslant b \}$ for
  suitable $a, b, c > 0$, such that
  \begin{enumerate}
    \item $\chi + \sum_{j \geqslant 0} \rho (2^{- j} \cdummy) \equiv 1$ and
    
    \item \label{property-2-dyadic}$\chi \rho (2^{- j} \cdummy) \equiv 0$ for
    $j \geqslant 1$ and $\rho (2^{- i} \cdummy) \rho (2^{- j} \nosymbol
    \cdummy) \equiv 0$ for all $i, j \geqslant 0$ with $| i - j | > 1$.
  \end{enumerate}
  We will often write $\rho_{- 1} = \chi$ and $\rho_j = \rho (2^{- j}
  \cdummy)$ for $j \geqslant 0$.
\end{definition}

Dyadic partitions of unity exist, as shown in~{\cite{bahouri_fourier_2011}}.
The reason for considering smooth partitions rather than indicator functions
is that indicator functions do not have good Fourier properties. We fix a
dyadic partition of unity $(\chi, \rho)$ and define the dyadic blocks
\[ \Delta_j f = \rho_j (\mathD) f = \CF^{- 1} (\rho_j \hat{f}), \quad j
   \geqslant - 1. \]
We also use the notation
\[ S_j f = \sum_{i < j - 1} \Delta_i f \]
and notice that
\[ \Delta_j f (x) = \int K_{j, x} (y) f (y) \mathd y, \qquad S_j f (x) = \int
   P_{j, x} (y) f (y) \mathd y \]
with $K_{j, x} (y) = 2^{dj} K (2^j (x - y))$, $P_{j, x} (y) = \sum_{i < j - 1}
K_{i, x} (y)$, $K$ radial with zero mean.

Every dyadic block has a compactly supported Fourier transform and therefore
it belongs to~$C^{\infty}$. It is easy to see that $f = \sum_{j \geqslant - 1}
\Delta_j f = \lim_{j \rightarrow \infty} S_j f$ for all $f \in \CD'$.

For $\alpha \in \mathbbm{R}$, the H{\"o}lder-Besov space $\CC^{\alpha}$ is
given by $\CC^{\alpha} = B^{\alpha}_{\infty, \infty} (\mathbbm{T}^d,
\mathbbm{R})$, where for $p, q \in [1, \infty]$ we define
\[ B^{\alpha}_{p, q} = B^{\alpha}_{p, q} (\mathbbm{T}^d, \mathbbm{R}) = \{f
   \in \DD' : \|f\|_{B^{\alpha}_{p, q}} = (\sum_{j \geqslant - 1} (2^{j
   \alpha} \| \Delta_j f\|_{L^p})^q)^{1 / q} < \infty\}, \]
with the usual interpretation as $\ell^{\infty}$ norm in case $q = \infty$.
Then $B^{\alpha}_{p, q}$ is a Banach space and while the norm $\lVert \cdummy
\rVert_{B^{\alpha}_{p, q}}$ depends on $(\chi, \rho)$, the space
$B^{\alpha}_{p, q}$ does not, and any other dyadic partition of unity
corresponds to an equivalent norm.

If $\alpha \in (0, \infty) \setminus \mathbbm{N}$, then $\CC^{\alpha}$ is the
space of $\lfloor \alpha \rfloor$ times differentiable functions whose partial
derivatives of order $\lfloor \alpha \rfloor$ are ($\alpha - \lfloor \alpha
\rfloor$)-H{\"o}lder continuous (see page~99
of~{\cite{bahouri_fourier_2011}}). Note however, that for $k \in \mathbbm{N}$
the space $\CC^k$ is strictly larger than $C^k$, the space of $k$ times
continuously differentiable functions.

The following lemma gives useful characterisation of Besov regularity for
functions that can be decomposed into pieces which are localized in Fourier
space.

\begin{lemma}
  \label{lem: Besov regularity of series}{\tmdummy}
  
  \begin{enumeratenumeric}
    \item Let $\CA$ be an annulus, let $\alpha \in \mathbb{\mathbbm{R}}$, and
    let $(u_j)$ be a sequence of smooth functions such that $\CF u_j$ has its
    support in $2^j \CA$, and such that $\|u_j \|_{L^{\infty}} \lesssim 2^{- j
    \alpha}$ for all $j$. Then
    \[ u = \sum_{j \geqslant - 1} u_j \in \CC^{\alpha} \qquad \tmop{and}
       \qquad \|u\|_{\alpha} \lesssim \sup_{j \geqslant - 1} \{2^{j \alpha}
       \|u_j \|_{L^{\infty}} \} . \]
    \item Let $\CB$ be a ball, let $\alpha > 0$, and let $(u_j)$ be a sequence
    of smooth functions such that $\CF u_j$ has its support in $2^j \CB$, and
    such that $\|u_j \|_{L^{\infty}} \lesssim 2^{- j \alpha}$ for all $j$.
    Then
    \[ u = \sum_{j \geqslant - 1} u_j \in \CC^{\alpha} \qquad \tmop{and}
       \qquad \|u\|_{\alpha} \lesssim \sup_{j \geqslant - 1} \{2^{j \alpha}
       \|u_j \|_{L^{\infty}} \} . \]
  \end{enumeratenumeric}
\end{lemma}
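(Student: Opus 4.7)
The plan is to estimate $\|\Delta_i u\|_{L^\infty}$ in both cases by exploiting the spectral support assumption on each $u_j$ together with property~(2) of the dyadic partition of unity, which forces $\Delta_i u_j = 0$ for most pairs $(i,j)$. Let $M \assign \sup_{j \geqslant -1} 2^{j\alpha}\|u_j\|_{L^\infty}$; the goal in both parts reduces to proving $\|\Delta_i u\|_{L^\infty} \lesssim M\, 2^{-i\alpha}$ for every $i \geqslant -1$. Throughout I will use that $\Delta_i$ is bounded on $L^\infty$ uniformly in $i$, since its convolution kernel $K_{i,\cdot}$ has an $L^1$ mass independent of $i$.

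For part~(1), fix the annulus $\CA$ from the hypothesis and the annulus $\CA_0$ underlying the dyadic partition (so $\rho_i$ has support in $2^i \CA_0$ for $i \geqslant 0$). Since both $2^j\CA$ and $2^i\CA_0$ are annuli with inner and outer radii of orders $2^j$ and $2^i$ respectively, there exists an integer $N$ depending only on $\CA$ and $\CA_0$ such that $2^j\CA \cap 2^i\CA_0 = \emptyset$ whenever $|i-j| > N$; the low-frequency block with $i = -1$ can be handled separately by a trivial modification of this observation. Consequently $\Delta_i u_j = \CF^{-1}(\rho_i \hat{u}_j) = 0$ outside this range and
\[
\|\Delta_i u\|_{L^\infty} \leqslant \sum_{|i-j|\leqslant N} \|\Delta_i u_j\|_{L^\infty} \lesssim \sum_{|i-j|\leqslant N} \|u_j\|_{L^\infty} \lesssim M\, 2^{-i\alpha},
\]
since the sum has at most $2N+1$ terms each comparable to $M\,2^{-i\alpha}$. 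This is the desired bound and $\alpha$ can be arbitrary in $\mathbbm{R}$.

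For part~(2), if $\hat{u}_j$ is supported in $2^j\CB$ and $\rho_i$ in the annulus $2^i\CA_0$, then $\Delta_i u_j = 0$ as soon as the ball $2^j\CB$ lies strictly inside the inner boundary of $2^i\CA_0$, i.e.\ for $j < i - N$ with $N$ depending only on $\CB$ and $\CA_0$. Therefore
\[
\|\Delta_i u\|_{L^\infty} \leqslant \sum_{j \geqslant i-N} \|\Delta_i u_j\|_{L^\infty} \lesssim M \sum_{j \geqslant i-N} 2^{-j\alpha} \lesssim M\, 2^{-i\alpha},
\]
where the last inequality is a geometric series whose convergence requires precisely $\alpha > 0$. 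The same geometric summation shows that the partial sums $\sum_{j\leqslant J} u_j$ form a Cauchy sequence in $\CC^\alpha$, which guarantees that $u$ is a well-defined element of $\CC^\alpha$ (and \emph{a fortiori} of $\CD'$) to which the Littlewood--Paley characterization of $\CC^\alpha$ applies.

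There is no real obstacle here beyond the spectral bookkeeping: the only quantitative input is the uniform $L^\infty$-boundedness of $\Delta_i$, and the only place where an assumption is essential is the ball case, where the positivity $\alpha > 0$ is exactly what is needed to sum the geometric tail. Once these ingredients are in place both estimates follow directly, yielding $\|u\|_{\CC^\alpha} \lesssim M$ as claimed.
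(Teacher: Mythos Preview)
The paper does not supply a proof of this lemma: it is stated in Appendix~\ref{appendix:besov} as a classical result collected from the literature (with a reference to~{\cite{bahouri_fourier_2011}}), and no argument is given. Your proof is correct and is precisely the standard textbook argument: in the annulus case the spectral supports of $\Delta_i$ and $u_j$ intersect only for $|i-j|\leqslant N$, giving a finite sum; in the ball case they intersect only for $j\geqslant i-N$, and the resulting tail is summable exactly when $\alpha>0$. There is nothing to add.
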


\

The Bernstein inequalities of the next lemma are extremely useful when
dealing with functions with compactly supported Fourier transform.

\begin{lemma}
  \label{lemma:Bernstein}Let $\CA$ be an annulus and let $\CB$ be a ball. For
  any $k \in \mathbbm{N}_0$, $\lambda > 0$, and $1 \leqslant p \leqslant q
  \leqslant \infty$ we have that
  \begin{enumeratenumeric}
    \item if $u \in L^p$ is such that $\tmop{supp} (\CF u) \subseteq \lambda
    \CB$, then
    \[ \max_{\mu \in \mathbb{N}^d : | \mu | = k} \| \partial^{\mu} u\|_{L^q}
       \lesssim_k \lambda^{k + d \left( \frac{1}{p} - \frac{1}{q} \right)}
       \|u\|_{L^p} ; \]
    \item if $u \in L^p$ is such that $\tmop{supp} (\CF u) \subseteq \lambda
    \CA$, then
    \[ \lambda^k \|u\|_{L^p} \lesssim_k \max_{\mu \in \mathbb{N}^d : | \mu | =
       k} \| \partial^{\mu} u\|_{L^p} . \]
  \end{enumeratenumeric}
\end{lemma}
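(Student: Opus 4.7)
The plan is to prove both bounds by convolving $u$ with rescaled Schwartz kernels built from a smooth cutoff of the frequency support, and then applying Young's convolution inequality. I would fix once and for all two radial bump functions $\tilde{\chi},\tilde{\psi}\in C^{\infty}_c(\mathbbm{R}^d)$, independent of $\lambda$: $\tilde{\chi}\equiv 1$ on a slight enlargement of $\CB$, and $\tilde{\psi}\equiv 1$ on a slight enlargement $\CA'\supset\CA$ of the annulus, chosen so that $\CA'$ still avoids the origin. Set $h_1:=\CF^{-1}\tilde{\chi}\in\mathcal{S}(\mathbbm{R}^d)$ and $h_\lambda(x):=\lambda^d h_1(\lambda x)=\CF^{-1}(\tilde{\chi}(\cdot/\lambda))(x)$. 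Because $\tilde{\chi}(\cdot/\lambda)\equiv 1$ on $\mathrm{supp}(\CF u)\subseteq\lambda\CB$, one has $u=h_\lambda\ast u$; analogously $\tilde{\psi}(\cdot/\lambda)\CF u=\CF u$ in the annulus case.

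For part (1), differentiation and a change of variables give $\partial^{\mu} h_\lambda(x)=\lambda^{d+k}(\partial^{\mu} h_1)(\lambda x)$ for $|\mu|=k$, so
\[ \|\partial^{\mu} h_\lambda\|_{L^r}=\lambda^{k+d(1-1/r)}\|\partial^{\mu} h_1\|_{L^r}. \]
Since $p\leqslant q$, I can pick $r\in[1,\infty]$ satisfying $1+\tfrac{1}{q}=\tfrac{1}{p}+\tfrac{1}{r}$, and Young's convolution inequality applied to $\partial^{\mu} u=(\partial^{\mu} h_\lambda)\ast u$ immediately yields the desired bound $\|\partial^{\mu} u\|_{L^q}\lesssim \lambda^{k+d(1/p-1/q)}\|u\|_{L^p}$.

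For part (2), the key trick is to invert the symbols $(i\xi)^{\mu}$ collectively on the annulus. Since $\sum_{|\nu|=k}|(i\xi)^{\nu}|^2$ is comparable to $|\xi|^{2k}$ and hence uniformly bounded below on $\CA'$ (which avoids the origin), the functions
\[ \Phi_{\mu}(\xi):=\frac{\tilde{\psi}(\xi)\,\overline{(i\xi)^{\mu}}}{\sum_{|\nu|=k}|(i\xi)^{\nu}|^2}\qquad(|\mu|=k) \]
are smooth and compactly supported, and they satisfy the algebraic identity $\sum_{|\mu|=k}\Phi_{\mu}(\xi)(i\xi)^{\mu}=\tilde{\psi}(\xi)$. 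Setting $g_{\mu}:=\CF^{-1}\Phi_{\mu}\in\mathcal{S}(\mathbbm{R}^d)$ and $g_{\mu}^{\lambda}(x):=\lambda^{d-k}g_{\mu}(\lambda x)$, a short Fourier-side computation then yields the reproducing identity $u=\sum_{|\mu|=k}g_{\mu}^{\lambda}\ast\partial^{\mu} u$. Since $\|g_{\mu}^{\lambda}\|_{L^1}=\lambda^{-k}\|g_{\mu}\|_{L^1}$, Young's inequality with exponents $(1,p,p)$ delivers $\|u\|_{L^p}\lesssim \lambda^{-k}\max_{|\mu|=k}\|\partial^{\mu} u\|_{L^p}$, which is the stated bound after multiplying through by $\lambda^k$.

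I do not expect any genuinely difficult step. The only mild technicality is that we work on $\mathbbm{T}^d$ rather than $\mathbbm{R}^d$, which I would handle by replacing the Euclidean kernels $h_\lambda$ and $g_{\mu}^{\lambda}$ by their periodisations; their $L^r(\mathbbm{T}^d)$ and $L^1(\mathbbm{T}^d)$ norms are controlled by the Schwartz norms of $\partial^{\mu}h_1$ and $g_{\mu}$ on $\mathbbm{R}^d$, uniformly in $\lambda\geqslant 1$. For small $\lambda$ the bounds are trivial since $u$ then has only finitely many non-zero Fourier modes.
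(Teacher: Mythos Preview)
Your proof is correct and follows the standard textbook argument (as in Bahouri--Chemin--Danchin, Lemma~2.1). Note that the paper itself does not prove this lemma: it is stated in the appendix as a classical result from harmonic analysis, with a reference to~{\cite{bahouri_fourier_2011}}, so there is no ``paper's own proof'' to compare against.
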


We recall some standard heat kernel estimations (see Chapter 2 of
{\cite{bahouri_fourier_2011}} and
{\cite{gubinelli_paracontrolled_2015,gubinelli_lectures_2015}}).

\begin{lemma}[Schauder estimates]
  \label{l:schauder}Let $V_t = \int_0^t e^{\eta (t - s) \Delta} v_s \mathd s$
  and $\PP_t u_0 = e^{\eta \Delta t} u_0$, with $\eta \geqslant \lambda$. We
  define $\LL^{\alpha}_T$ and $C_{\eta}^k \LL^{\alpha}_T$, $C_{\eta}^k
  \CC^{\alpha}_T$ for $k \in \mathbbm{N}$ as in
  (\ref{e:def-Lnorm}),~(\ref{e:def-Cetanorm}) and introduce the norm
  \[ \| v \|_{\mathcal{E}^{\delta}_T \CC^{\alpha}} = \sup_{t \in [0, T]}
     t^{\delta} \| v (t, \cdummy) \|_{\CC^{\alpha}} . \]
  Then for any $\gamma \in [0, 1)$ and $\alpha \in \mathbbm{R}$:
  \begin{eqnarray*}
    \sup_{t \in [0, T]} t^{\gamma} \| V_t \|_{C_{\eta}^k \CC^{\alpha - 2
    \beta}} & \lesssim & \sup_{t \in [0, T]} t^{\gamma + \beta} \| v_t
    \|_{\CC^{\alpha - 2}} \quad \forall \beta \in [0, 1)\\
    \| V \|_{C_{\eta}^k \LL^{\alpha - 2 \beta}_T} & \lesssim & T^{\beta} \| v
    \|_{\CC^{\alpha - 2}_T} \quad \forall \beta \in [0, 1)\\
    \| V \|_{C_{\eta}^k \LL_T^{\alpha - 2 \beta}} & \lesssim & T^{\beta -
    \delta} \| v \|_{\mathcal{E}^{\delta}_{_T} \CC^{\alpha - 2}} \quad \forall
    \beta \in [0, 1), \forall \delta \in [0, \beta]\\
    \| V \|_{C_{\eta}^k \LL_T^{\gamma}} & \lesssim & T^{\frac{\rho -
    \gamma}{2} + 1 - \delta} \| v \|_{\mathcal{E}^{\delta}_{_T} \CC^{\rho}}\\
    &  & \forall \rho \in [\gamma - 2, \gamma), \forall \delta \in [0, (\rho
    - \gamma) / 2 + 1]\\
    \left\| \PP_t u_0 \right\|_{C_{\eta}^k \CC^{\alpha}} & \lesssim & t^{-
    \frac{\alpha - \beta}{2}} \| u_0 \|_{\CC^{\beta}} \quad \forall \beta <
    \alpha\\
    \left\| \PP_t u_0 - \PP_s u_0 \right\|_{C_{\eta}^k \CC^{\alpha}} &
    \lesssim & s^{- \delta} | t - s |^{\frac{\beta - \alpha}{2} + \delta} \|
    u_0 \|_{\CC^{\beta}_{}}\\
    &  & \forall t \neq s \in \mathbbm{R}^+, \beta \leqslant \alpha + 2,
    \delta \in [0, 1 - (\beta - \alpha) / 2]
  \end{eqnarray*}
\end{lemma}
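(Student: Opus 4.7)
The plan is to reduce every claim to a Littlewood--Paley block estimate and combine Duhamel's formula with the standard heat-kernel bound for spectrally localised functions: if $f$ has Fourier support in $2^j\CA$ with $\CA$ an annulus then
\[
\|e^{s\eta\Delta}f\|_{L^\infty}\leq C\, e^{-c\eta 2^{2j}s}\|f\|_{L^\infty},\qquad j\geq 0,
\]
uniformly in $\eta,s,j$ (Young's inequality against a rescaled Gaussian, Bahouri--Chemin--Danchin~\cite{bahouri_fourier_2011} Lemma~2.4), while the trivial estimate $\|e^{s\eta\Delta}\Delta_{-1}f\|_{L^\infty}\leq\|\Delta_{-1}f\|_{L^\infty}$ handles the low-frequency block.

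For the bounds on $V_t=\int_0^t e^{(t-s)\eta\Delta}v_s\mathd s$, I would apply Duhamel to obtain $\|\Delta_j V_t\|_{L^\infty}\lesssim\int_0^t e^{-c\eta 2^{2j}(t-s)}\|\Delta_j v_s\|_{L^\infty}\mathd s$, insert $\|\Delta_j v_s\|_{L^\infty}\lesssim 2^{-(\alpha-2)j}s^{-(\gamma+\beta)}M$ with $M\assign\sup_s s^{\gamma+\beta}\|v_s\|_{\CC^{\alpha-2}}$, and split the $s$-integral at $t/2$. On $(0,t/2)$ the exponential is at most $e^{-c\lambda 2^{2j}t/2}$ and the time integral gives $t^{1-\gamma-\beta}$; on $(t/2,t)$ one replaces $s^{-\gamma-\beta}$ by $t^{-\gamma-\beta}$ and uses $(1-e^{-x})/x\lesssim x^{-\theta}$ for $\theta\in[0,1]$. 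Multiplying through by $2^{(\alpha-2\beta)j}$, optimising the first piece near $2^{2j}\sim 1/t$ and estimating the second directly yields $t^{-\gamma}M$ uniformly in $j$, which is the first claim. The other $V$-estimates follow the same way, combined with the identity $e^{(t'-s)\eta\Delta}-e^{(t-s)\eta\Delta}=\int_t^{t'}\eta\Delta e^{(r-s)\eta\Delta}\mathd r$ to pick up the time Hölder seminorm defining $\LL_T^\gamma$.

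The $\eta$-derivatives are handled via $\partial_\eta e^{s\eta\Delta}=s\Delta e^{s\eta\Delta}$, which on the $j$-th block contributes a factor $s\cdot 2^{2j}$. Since $\eta\geq\lambda>0$, the exponential $e^{-c\lambda 2^{2j}s}$ absorbs any polynomial $(s\cdot 2^{2j})^k$ at the price of a larger multiplicative constant and a slightly reduced decay rate $c'<c$; hence $\|\partial_\eta^k\Delta_j V_t\|_{L^\infty}$ obeys the same bound as $\|\Delta_j V_t\|_{L^\infty}$ uniformly in $\eta\in[\lambda,1]$, and the $C_\eta^k$ norms follow at no extra cost.

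For the initial-data terms I would estimate $\|\Delta_j\PP_t u_0\|_{L^\infty}\lesssim e^{-c\eta 2^{2j}t}2^{-\beta j}\|u_0\|_{\CC^\beta}$; multiplying by $2^{\alpha j}$ and maximising over $j\geq 0$ (the extremum sits near $2^{2j}\sim 1/(\lambda t)$ when $\alpha>\beta$) gives the announced $t^{-(\alpha-\beta)/2}$ blow-up. For the time increment, use $\PP_t u_0-\PP_s u_0=\int_s^t\eta\Delta\PP_r u_0\mathd r$, apply the previous bound to $\Delta\PP_r u_0$ in $\CC^\alpha$, and integrate with the split $r\in[s,2s]$ versus $r\in(2s,t)$ to recover the stated exponents. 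The whole argument is routine Littlewood--Paley analysis; the only real obstacle is purely combinatorial, namely bookkeeping of $\gamma,\beta,\delta$ in the various blow-up and Hölder weights. The uniform lower bound $\lambda>0$ is essential, since without it the $\eta$-derivatives would produce unabsorbable polynomial factors in the frequency $2^j$.
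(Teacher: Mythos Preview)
The paper does not actually prove Lemma~\ref{l:schauder}: it states the estimates as classical and refers the reader to~\cite{gubinelli_paracontrolled_2015,gubinelli_lectures_2015} and to Chapter~2 of~\cite{bahouri_fourier_2011}. Your sketch is precisely the standard Littlewood--Paley argument one finds in those references --- the key input being the spectral heat-kernel bound $\|e^{s\eta\Delta}\Delta_j f\|_{L^\infty}\lesssim e^{-c\eta 2^{2j}s}\|\Delta_j f\|_{L^\infty}$, together with the observation that each $\partial_\eta$ produces a factor $s\,2^{2j}$ which is absorbed by the exponential thanks to $\eta\geq\lambda>0$ --- so your approach agrees with what the paper has in mind.
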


We need the following interpolation lemma:

\begin{lemma}
  \label{l:interpol} Let $\gamma \in (0, 2)$, $0 < \varepsilon < \gamma$ and
  $u \in \LL^{\gamma}_T$ . Then
  \[ \| u \|_{C_T^{\gamma / 2 - \varepsilon / 2} L^{\infty}} \lesssim \| u
     \|_{\LL^{\gamma}_T} \]
\end{lemma}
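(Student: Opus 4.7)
The plan is to decompose $u(t,x) - u(s,x)$ into Littlewood--Paley blocks and estimate each block in two different ways: using time regularity on the low frequencies, and using space regularity on the high frequencies. The interpolation exponent $\varepsilon > 0$ then arises from a geometric mean to avoid a logarithmic loss when summing over dyadic scales.

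Concretely, for $t,s \in [0,T]$ and any $j \geq -1$ we have the two bounds
\[
\|\Delta_j (u(t) - u(s))\|_{L^\infty} \leq \|u(t)-u(s)\|_{\CC^0} \leq |t-s|^{\gamma/2} \|u\|_{C_T^{\gamma/2}\CC^0}
\]
and
\[
\|\Delta_j (u(t) - u(s))\|_{L^\infty} \leq 2 \sup_{r \in [0,T]} \|\Delta_j u(r)\|_{L^\infty} \lesssim 2^{-j\gamma}\|u\|_{C_T \CC^\gamma},
\]
where the second uses the definition of $\CC^\gamma = B^\gamma_{\infty,\infty}$. Hence, in terms of the full parabolic norm,
\[
\|\Delta_j (u(t)-u(s))\|_{L^\infty} \lesssim \min\bigl( |t-s|^{\gamma/2},\, 2^{-j\gamma} \bigr) \|u\|_{\LL^\gamma_T}.
\]

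Summing naively in $j$ produces a logarithm at the crossover scale $2^{-j} \sim |t-s|^{1/2}$. To avoid this I would use the elementary inequality $\min(a,b) \leq a^{1-\theta} b^\theta$ for any $\theta \in [0,1]$, applied with $\theta = \varepsilon/\gamma \in (0,1)$:
\[
\min\bigl( |t-s|^{\gamma/2},\, 2^{-j\gamma} \bigr) \leq |t-s|^{(\gamma-\varepsilon)/2}\, 2^{-j\varepsilon}.
\]
Summing in $j \geq -1$ then gives a convergent geometric series in $2^{-j\varepsilon}$, yielding
\[
\|u(t)-u(s)\|_{L^\infty} \leq \sum_{j \geq -1} \|\Delta_j (u(t)-u(s))\|_{L^\infty} \lesssim |t-s|^{(\gamma-\varepsilon)/2} \|u\|_{\LL^\gamma_T}.
\]
Together with the trivial bound $\|u(t)\|_{L^\infty} \lesssim \|u(t)\|_{\CC^\gamma}$ (which holds since $\gamma > 0$, again by summing the Littlewood--Paley blocks), this gives the claim.

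The only delicate point is ensuring $\varepsilon > 0$ strictly: without it, the interpolation collapses and one picks up a $\log|t-s|$ factor, so the role of $\varepsilon$ is precisely to trade a small amount of time regularity for summability in the dyadic scale. No other obstacle arises, since both ingredients (Bernstein-type control and Hölder continuity in $\CC^0$) are directly built into the definition~\eqref{e:def-Lnorm} of $\LL^\gamma_T$.
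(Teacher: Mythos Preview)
Your proof is correct and follows essentially the same approach as the paper: both arguments use the two basic bounds $\|\Delta_j(u_t-u_s)\|_{L^\infty}\lesssim |t-s|^{\gamma/2}\|u\|_{C_T^{\gamma/2}\CC^0}$ and $\|\Delta_j(u_t-u_s)\|_{L^\infty}\lesssim 2^{-j\gamma}\|u\|_{C_T\CC^\gamma}$, then combine them at the crossover scale $2^{-j}\sim |t-s|^{1/2}$. The paper does this by splitting the sum at an explicit cutoff $n$ with $2^{-n}\sim |t-s|^{1/2}$ (absorbing the resulting $n\sim|\log|t-s||$ factor into the $\varepsilon$ slack), whereas you use the interpolation inequality $\min(a,b)\leq a^{1-\theta}b^\theta$ with $\theta=\varepsilon/\gamma$, which is slightly cleaner but amounts to the same thing.
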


\begin{proof}
  
  \begin{eqnarray*}
    \sup_{s \neq t} \frac{\| u_t - u_s \|_{L^{\infty}}}{| t - s |^{\gamma / 2
    - \varepsilon / 2}} & \leqslant & \sup_{s \neq t} [\sum_{i \leqslant n}
    \frac{\| \Delta_i u_t - \Delta_i u_s \|_{L^{\infty}}}{| t - s |^{\gamma /
    2 - \varepsilon / 2}} + \sum_{i > n} \frac{\| \Delta_i u_t - \Delta_i u_s
    \|_{L^{\infty}}}{| t - s |^{\gamma / 2 - \varepsilon / 2}}]
  \end{eqnarray*}
  and choosing $2^{- n - 1} \leqslant | t - s |^{1 / 2} \leqslant 2^{- n}$ we
  obtain
  \begin{eqnarray*}
    \sum_{i < n} \frac{\| \Delta_i u_t - \Delta_i u_s \|_{L^{\infty}}}{| t - s
    |^{\gamma / 2 - \varepsilon / 2}} & \lesssim & \| u \|_{C_T^{\gamma / 2}
    \CC^0} \sum_{i \leqslant n} | t - s |^{\varepsilon / 2}\\
    \sum_{i \geqslant n} \frac{\| \Delta_i u_t - \Delta_i u_s
    \|_{L^{\infty}}}{| t - s |^{\gamma / 2 - \varepsilon / 2}} & \lesssim & \|
    u \|_{\CC^{\gamma}_T} \sum_{i > n} 2^{- \gamma i} 2^{- (\gamma +
    \varepsilon) n}
  \end{eqnarray*}
  and this yields the result.
\end{proof}

Terms of the type $\| a (u (t, x)) \|_{\CC^0}$ with $a : \mathbbm{R}
\rightarrow \mathbbm{R}$ cannot be estimated directly with their H{\"o}lder
norm. In the following lemma we prove some bounds used in
Section~\ref{s:fixpoint}.

\begin{lemma}
  Let $a \in C^3_b$ uniformly bounded and $u \in \LL^{\alpha}_T =
  \CC^{\alpha}_T \cap C_T^{\alpha / 2} \CC^0$, then
  \begin{eqnarray*}
    \| a (u) \|_{\LL^{\alpha}_T} & \lesssim & \| a \|_{L^{\infty}} + \| a'
    \|_{L^{\infty}} (\| u \|_{C_T^{\alpha / 2} \CC^0} + \| u
    \|_{\CC^{\alpha}_{_T}}) + \| a'' \|_{L^{\infty}} \| u \|_{C_T^{\alpha / 2}
    \CC^0} \| u \|_{\CC^{\alpha}_{_T}}\\
    & \lesssim & 1 + \| u \|_{\LL^{\alpha}_T} + \| u \|^2_{\LL_T^{\alpha}},
  \end{eqnarray*}
  \begin{eqnarray*}
    \| a (u_1) - a (u_2) \|_{\LL_T^{\alpha}} & \lesssim & \| u_1 - u_2
    \|_{\LL^{\alpha}_T} \left( 1 + \| u_1 \|_{\CC^{\alpha}_T} + \| u_2
    \|_{\CC^{\alpha}_T} \right)^2,\\
    \| a (u_1) - a (u_2) \|_{\CC_T^0} & \lesssim & \| u_1 - u_2 \|_{\CC^0_T}
    \left( 1 + \| u_1 \|_{\CC^{\alpha}_T} + \| u_2 \|_{\CC^{\alpha}_T} \right)
    .
  \end{eqnarray*}
\end{lemma}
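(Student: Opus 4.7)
The plan is to treat the three estimates separately, reducing each to an elementary mean value computation combined with a Besov product rule for the Lipschitz bounds. Since $\LL_T^\alpha = C_T \CC^\alpha \cap C_T^{\alpha/2} \CC^0$ with $\alpha \in (0,1)$, the spatial Besov norm $\CC^\alpha = B^\alpha_{\infty,\infty}$ is equivalent to the classical H\"older norm, so all the estimates can be phrased directly in terms of H\"older increments.

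For the first bound on $\|a(u)\|_{\LL^\alpha_T}$, I would split into spatial and temporal parts. The chain rule estimate $|a(u(t,x)) - a(u(t,y))| \leq \|a'\|_{L^\infty} |u(t,x) - u(t,y)|$ immediately controls $\|a(u)\|_{C_T \CC^\alpha}$ by $\|a\|_{L^\infty} + \|a'\|_{L^\infty} \|u\|_{C_T \CC^\alpha}$, and the same argument applied in the time variable controls $\|a(u)\|_{C_T^{\alpha/2} \CC^0}$. To recover the cross term $\|a''\|_{L^\infty} \|u\|_{C_T^{\alpha/2} \CC^0} \|u\|_{\CC^\alpha_T}$ I would invoke the second order Taylor expansion
\[
a(u(t,x)) - a(u(s,y)) = a'(u(s,y))(u(t,x) - u(s,y)) + \frac{1}{2} a''(\xi)(u(t,x) - u(s,y))^2,
\]
combined with the splitting $u(t,x) - u(s,y) = [u(t,x) - u(s,x)] + [u(s,x) - u(s,y)]$, whose square produces a mixed increment of order $|t-s|^{\alpha/2} |x-y|^\alpha$, yielding naturally the product of the two separate H\"older norms.

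For the second bound, the Lipschitz estimate for $a(u_1) - a(u_2)$ in $\LL^\alpha_T$, the starting point is the standard identity
\[
a(u_1) - a(u_2) = (u_1 - u_2)\, G, \qquad G \assign \int_0^1 a'(u_2 + \tau (u_1 - u_2))\, \mathd\tau.
\]
Applying the first estimate of the lemma to each composition $a'(u_2 + \tau(u_1 - u_2))$ uniformly in $\tau \in [0,1]$ (exploiting $\|a^{(k)}\|_{L^\infty} \leq 1$ for $k = 0,\ldots,3$) yields a bound on $\|G\|_{\LL^\alpha_T}$ of the form $1 + \|u_1\|_{\LL^\alpha_T} + \|u_2\|_{\LL^\alpha_T} + (\|u_1\|_{\LL^\alpha_T} + \|u_2\|_{\LL^\alpha_T})^2$. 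The Besov product rule $\|fg\|_{\CC^\alpha} \lesssim \|f\|_{L^\infty} \|g\|_{\CC^\alpha} + \|f\|_{\CC^\alpha} \|g\|_{L^\infty}$ (and its parabolic analogue) then gives $\|(u_1 - u_2) G\|_{\LL^\alpha_T} \lesssim \|u_1 - u_2\|_{\LL^\alpha_T} \|G\|_{\LL^\alpha_T}$, which after absorption of constants yields the stated bound with the factor $(1 + \|u_1\|_{\CC^\alpha_T} + \|u_2\|_{\CC^\alpha_T})^2$.

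The third bound is the easiest: the pointwise mean value inequality gives directly $\|a(u_1) - a(u_2)\|_{\CC^0_T} \leq \|a'\|_{L^\infty} \|u_1 - u_2\|_{\CC^0_T}$, and the multiplicative factor $(1 + \|u_1\|_{\CC^\alpha_T} + \|u_2\|_{\CC^\alpha_T})$ is a harmless over-estimate kept to match the form of the other bounds. I expect the main obstacle to be the second bound: one must carefully verify that $G$ lies in $\LL^\alpha_T$ with the correct quadratic dependence on $\|u_1\|_{\LL^\alpha_T}, \|u_2\|_{\LL^\alpha_T}$, and handle the parabolic product rule so that the product decomposes cleanly into a spatial part controlled by $\|u\|_{\CC^\alpha}$ and a temporal part controlled by $\|u\|_{C_T^{\alpha/2} \CC^0}$, without losing powers of the norms.
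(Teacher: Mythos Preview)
Your proposal has a genuine gap stemming from the identification of $\CC^0$ with $L^\infty$. In this paper $\CC^0 = B^0_{\infty,\infty}$, which strictly contains $L^\infty$; the sentence preceding the lemma in the paper warns precisely that ``terms of the type $\|a(u(t,x))\|_{\CC^0}$ \ldots\ cannot be estimated directly with their H\"older norm''. Your pointwise chain rule gives $\|a(u_t)-a(u_s)\|_{L^\infty}\le \|a'\|_{L^\infty}\|u_t-u_s\|_{L^\infty}$, and while $L^\infty \hookrightarrow \CC^0$ handles the left side, on the right you only control $\|u_t-u_s\|_{\CC^0}$ from the hypothesis $u\in C_T^{\alpha/2}\CC^0$, not $\|u_t-u_s\|_{L^\infty}$. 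The interpolation lemma of the paper (Lemma~\ref{l:interpol}) only recovers $C_T^{\alpha/2-\varepsilon/2}L^\infty$, so your argument produces the estimates with an unavoidable $\varepsilon$-loss in the time H\"older exponent and does not yield the lemma as stated. The same issue contaminates your treatment of the second and third bounds; in particular the factor $(1+\|u_1\|_{\CC^\alpha_T}+\|u_2\|_{\CC^\alpha_T})$ in the third estimate is \emph{not} a harmless over-estimate but is exactly the price for working in $\CC^0$ rather than $L^\infty$.

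The paper's proof avoids this by estimating $\Delta_i(a(u_t)-a(u_s))$ directly: after the mean value step $a(u_t)-a(u_s)=\int_0^1 a'(\delta_\tau u)\,(u_t-u_s)\,\mathd\tau$ it performs a paraproduct-type decomposition of the product $a'(\delta_\tau u)\cdot(u_t-u_s)$ into three frequency regimes. The diagonal and high--low pieces are handled by expanding $a'$ to second order (producing the $\|a''\|_{L^\infty}\|u\|_{\CC^\alpha_T}$ factor), while the low--high piece uses only $\|a'\|_{L^\infty}$. This yields $\|a(u_t)-a(u_s)\|_{\CC^0}\lesssim \|u_t-u_s\|_{\CC^0}(\|a'\|_{L^\infty}+\|a''\|_{L^\infty}\|u\|_{\CC^\alpha_T})$ with the weak norm $\|u_t-u_s\|_{\CC^0}$ on the right, closing the estimate without loss. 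The same mechanism, iterated once more, gives the Lipschitz bounds and explains why $a\in C^3_b$ is required.
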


\begin{proof}
  The bound on $\| a (u) \|_{\CC_T^{\alpha}}$ is trivial. We estimate $\| a
  (u_t) - a (u_s) \|_{\CC^0}$ as
  \[ | \int_z K_{i, x} (z) [a (u (t, z)) - a (u (s, z))] | = | \int_{z, \tau
     \in [0, 1]} K_{i, x} (z) a' (\delta_{\tau} u^{t z}_{s z}) [u (t, z) - u
     (s, z)] | \]
  \begin{eqnarray*}
    & \leqslant & | \int_{z, w, \tau \in [0, 1]} K_{i, x} (z) \sum_{j
    \lesssim k \sim i} [\Delta_j u (t, z) - \Delta_j u (s, z)] K_{k, z} (w) a'
    (\delta_{\tau} u^{t w}_{s w}) |\\
    &  & + | \int_{z, w, \tau \in [0, 1]} K_{i, x} (z) \sum_{k \lesssim j
    \sim i} [\Delta_j u (t, z) - \Delta_j u (s, z)] K_{k, z} (w) a'
    (\delta_{\tau} u^{t w}_{s w}) |\\
    &  & + | \int_{z, w, \tau \in [0, 1]} K_{i, x} (z) \sum_{k \sim j \gtrsim
    i} [\Delta_j u (t, z) - \Delta_j u (s, z)] K_{k, z} (w) a' (\delta_{\tau}
    u^{t w}_{s w}) | .
  \end{eqnarray*}
  If $k > - 1$ we have
  \[ \int_w K_{k, z} (w) a' (\delta_{\tau} u^{t w}_{s w}) = \int_w K_{k, z}
     (w) [a' (\delta_{\tau} u^{t w}_{s w}) - a' (\delta_{\tau} u^{t z}_{s z})]
  \]
  and then the first term above becomes
  \[ | \int_{z, w, \tau, \sigma} K_{i, x} (z) \sum_{j \lesssim k \sim i}
     [\Delta_j u (t, z) - \Delta_j u (s, z)] K_{k, z} (w) a'' (\delta_{\sigma}
     (\delta_{\tau} u^t_s)^w_z) [\delta u^{t w}_{t z} - \delta u^{s w}_{s z}]
     | \]
  \begin{eqnarray*}
    & \lesssim & \| a'' \|_{L^{\infty}} \| u_t - u_s \|_{\CC^0} \sum_{j
    \lesssim k \sim i} \int_w | K_{k, z} (w) | | w - z |^{\alpha} \| u
    \|_{\CC^{\alpha}_T}\\
    & \lesssim & i 2^{- \alpha i} \| a'' \|_{L^{\infty}} \| u_t - u_s
    \|_{\CC^0} \| u \|_{\CC^{\alpha}_T} .
  \end{eqnarray*}
  The second term is
  \begin{eqnarray*}
    &  & | \int_{z, w, \tau \in [0, 1]} K_{i, x} (z) \sum_{j \sim i}
    [\Delta_j u (t, \cdummy) - \Delta_j u (s, \cdummy)] (z) P_{j, z} (w) a'
    (\delta_{\tau} u^{t w}_{s w}) |\\
    & \lesssim & \| u_t - u_s \|_{\CC^0} \| P_{j, z} (w) a' (\delta_{\tau}
    u^{t w}_{s w}) \|_{L^{\infty}} \lesssim \| u_t - u_s \|_{\CC^0} \| a'
    \|_{L^{\infty}}
  \end{eqnarray*}

  The third term can be estimated as the first one when $k > - 1$. Otherwise
  we just bound it as
  \begin{eqnarray*}
    &  & | \int_{z, w, \tau \in [0, 1]} K_{i, x} (z) \sum_{j \leqslant N}
    [\Delta_j u (t, \cdummy) - \Delta_j u (s, \cdummy)] (z) K_{- 1, z} (w) a'
    (\delta_{\tau} u^{t w}_{s w}) |\\
    & \lesssim & \| u_t - u_s \|_{\CC^0} \| a' \|_{L^{\infty}} .
  \end{eqnarray*}
  For the three terms together we have the bound
  \[ \| a (u_t) - a (u_s) \|_{\CC^0} \lesssim \| u_t - u_s \|_{\CC^0} \left(
     \| a' \|_{L^{\infty}} + \| a'' \|_{L^{\infty}} \| u
     \|_{\CC^{\alpha}_{_T}} \right) \]
  With the same technique we obtain
  \begin{eqnarray*}
    \| a (u_1) - a (u_2) \|_{C_T^{\alpha / 2} \CC^0} & \lesssim & \| a'
    \|_{L^{\infty}} \| u_1 - u_2 \|_{C_T^{\alpha / 2} \CC^0}\\
    &  & + \| a'' \|_{L^{\infty}} \| u_1 - u_2 \|_{C_T^{\alpha / 2} \CC^0} \|
    u_1 - u_2 \|_{\CC^{\alpha}_T}\\
    &  & + \| a''' \|_{L^{\infty}} \| u_1 - u_2 \|_{C_T^{\alpha / 2} \CC^0}
    \| u_1 - u_2 \|^2_{\CC^{\alpha}_T}
  \end{eqnarray*}
  and this gives the second estimate. The third one can be obtained easily.
\end{proof}

\subsection{Bony's paraproduct}\label{ssec:bony}

\

In terms of Littlewood--Paley blocks, the product of two smooth functions
$fg$ can be decomposed as
\[ fg = \sum_{j \geqslant - 1} \sum_{i \geqslant - 1} \Delta_i f \Delta_j g =
   f \prec g + f \succ g + f \circ g. \]
Where the \tmtextit{paraproducts} $f \prec g$ and $f \succ g$ and the
\tmtextit{resonant product} $f \circ g$ are defined as
\[ f \prec g = g \succ f \assign \sum_{j \geqslant - 1} \sum_{i = - 1}^{j - 2}
   \Delta_i f \Delta_j g \qquad \text{and} \qquad f \circ g \assign \sum_{|i -
   j| \leqslant 1} \Delta_i f \Delta_j g. \]
We will often use the shortcuts $\sum_{i \sim j}$ for $\sum_{| i - j |
\leqslant 1}$ and $\sum_{i \lesssim j}$ for $\sum_{i < j - 1}$. Of course, the
decomposition depends on the dyadic partition of unity used to define the
blocks $\Delta_j$, and also on the particular choice of the pairs $(i, j)$ in
the diagonal part. The choice of taking all $(i, j)$ with $|i - j| \leqslant
1$ into the diagonal part corresponds to property~\ref{property-2-dyadic} in
our definition of dyadic partitions of unity.

Bony's crucial observation~{\cite{Bony1981,meyer_remarques_1981}} is that the
paraproduct $f \prec g$ (and thus $f \succ g$) is always a well-defined
distribution. Heuristically, $f \prec g$ behaves at large frequencies like $g$
(and thus retains the same regularity), and $f$ provides only a frequency
modulation of $g$. The only difficulty in constructing $fg$ for arbitrary
distributions lies in handling the resonant product $f \circ g$. The basic
result about these bilinear operations is given by the following estimates.

\begin{theorem}
  (Paraproduct estimates)\label{thm:paraproduct} For any $\beta \in
  \mathbbm{R}$ and $f, g \in \CD'$ we have
  \begin{equation}
    \label{eq:para-1} \|f \prec g\|_{\CC^{\beta}} \lesssim_{\beta}
    \|f\|_{L^{\infty}} \|g\|_{\CC^{\beta}},
  \end{equation}
  and for $\alpha < 0$ furthermore
  \begin{equation}
    \label{eq:para-2} \|f \prec g\|_{\CC^{\alpha + \beta}} \lesssim_{\alpha,
    \beta} \|f\|_{\CC^{\alpha}} \|g\|_{\CC^{\beta}} .
  \end{equation}
  For $\alpha + \beta > 0$ we have
  \begin{equation}
    \label{eq:para-3} \|f \circ g\|_{\CC^{\alpha + \beta}} \lesssim_{\alpha,
    \beta} \|f\|_{\CC^{\alpha}} \|g\|_{\CC^{\beta}} .
  \end{equation}
\end{theorem}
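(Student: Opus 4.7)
The plan is to reduce each of the three estimates to an application of Lemma~\ref{lem: Besov regularity of series} by verifying that the relevant partial sums are spectrally localised either in an annulus of size $\sim 2^j$ (for the paraproducts) or in a ball of radius $\sim 2^j$ (for the resonant product), and then bounding the $L^\infty$ norm of each block using the assumed regularity of $f$ and $g$.

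For the first two estimates, write $f\prec g = \sum_{j\geqslant -1} S_{j-1}f\,\Delta_j g$. Since $\operatorname{supp}\CF(S_{j-1}f)\subset 2^{j-1}\CB$ and $\operatorname{supp}\CF(\Delta_j g)\subset 2^j\CA$, the product $S_{j-1}f\,\Delta_j g$ has Fourier support in an annulus of the form $2^j\CA'$ (for $j$ large enough; the finitely many low-frequency terms are handled trivially). Hence Lemma~\ref{lem: Besov regularity of series}(1) reduces the claim to a uniform bound $\|S_{j-1}f\,\Delta_j g\|_{L^\infty}\lesssim 2^{-j\gamma}M$ with the appropriate $\gamma$ and $M$. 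For~(\ref{eq:para-1}) I simply use $\|S_{j-1}f\|_{L^\infty}\leqslant \|f\|_{L^\infty}$ together with $\|\Delta_j g\|_{L^\infty}\lesssim 2^{-j\beta}\|g\|_{\CC^\beta}$, giving $\gamma=\beta$ and $M=\|f\|_{L^\infty}\|g\|_{\CC^\beta}$. For~(\ref{eq:para-2}) I estimate
\[
\|S_{j-1}f\|_{L^\infty}\leqslant \sum_{i<j-1}\|\Delta_i f\|_{L^\infty}\lesssim \|f\|_{\CC^\alpha}\sum_{i<j-1}2^{-i\alpha}\lesssim 2^{-j\alpha}\|f\|_{\CC^\alpha},
\]
where the geometric sum converges at the high-frequency end precisely because $\alpha<0$; combining with the bound on $\|\Delta_j g\|_{L^\infty}$ yields $\gamma=\alpha+\beta$ and $M=\|f\|_{\CC^\alpha}\|g\|_{\CC^\beta}$.

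For~(\ref{eq:para-3}), decompose $f\circ g = \sum_{j\geqslant -1}\sum_{i\sim j}\Delta_i f\,\Delta_j g$ and group the terms by $j$. Now the product $\Delta_i f\,\Delta_j g$ with $|i-j|\leqslant 1$ only has Fourier support contained in a \emph{ball} of radius $\sim 2^j$, not an annulus (cancellations could in principle occur around frequency $0$). This forces us into Lemma~\ref{lem: Besov regularity of series}(2), whose hypothesis requires positive regularity, which is exactly the condition $\alpha+\beta>0$ in the statement. The pointwise bound is immediate:
\[
\|\Delta_i f\,\Delta_j g\|_{L^\infty}\lesssim 2^{-i\alpha-j\beta}\|f\|_{\CC^\alpha}\|g\|_{\CC^\beta}\lesssim 2^{-j(\alpha+\beta)}\|f\|_{\CC^\alpha}\|g\|_{\CC^\beta}
\]
uniformly in $i\sim j$, so summing in $i$ over the three admissible indices and invoking Lemma~\ref{lem: Besov regularity of series}(2) gives the conclusion.

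There is no real obstacle here; the only subtle point, and the one that dictates the dichotomy between~(\ref{eq:para-2}) and~(\ref{eq:para-3}), is the loss of annular localisation for the diagonal piece. This is why paraproducts are always well defined while the resonant part requires summable regularity. The endpoint cases (finitely many low-frequency blocks where $i=-1$ or $j=-1$) are absorbed into the constants without any additional argument.
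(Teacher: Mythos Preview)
Your proof is correct and is precisely the standard argument; the paper does not actually give its own proof of this theorem but rather states it as a classical result taken from~\cite{bahouri_fourier_2011}. One minor notational slip: with the paper's convention $S_j f=\sum_{i<j-1}\Delta_i f$, the paraproduct reads $f\prec g=\sum_j S_j f\,\Delta_j g$ rather than $\sum_j S_{j-1}f\,\Delta_j g$, but this does not affect the argument.
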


Bony proved also a basic \tmtextit{paralinearisation} result, soon after
improved by Meyer. We give here a particular version suited to our purposes.

\begin{theorem}
  \label{th:paralinearisation}Let $\alpha \in (0, 1)$, $f \in \left(
  \CC^{\alpha} \right)^k$ and $F \in C^3 (\mathbbm{R}^k ; \mathbbm{R})$ then
  \[ R_F (f) \assign F (f) - F' (f) \prec f \in \CC^{2 \alpha} \]
  with
  \[ \| R_F (f) \|_{\CC^{2 \alpha}} \lesssim \| F \|_{C^2} \left( 1 + \| f
     \|_{\CC^{\alpha}} \right)^2 . \]
  Moreover the map $f \mapsto R_F (f)$ is locally Lipshitz and
  \[ \| R_F (f) - R_F (\tilde{f}) \|_{\CC^{2 \alpha}} \lesssim \| F \|_{C^3}
     \left( 1 + \| f \|_{\CC^{\alpha}} + \| \tilde{f} \|_{\CC^{\alpha}}
     \right)^2 \| \tilde{f} - f \|_{\CC^{\alpha}} . \]
\end{theorem}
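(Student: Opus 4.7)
The plan is to combine Bony's telescoping decomposition with a Taylor expansion at each dyadic scale, a standard route for paralinearisation results. Setting $S_{-1} f = 0$ and writing $F(f) = F(0) + \sum_{q \geq -1}(F(S_{q+1} f) - F(S_q f))$, the mean value theorem yields $F(S_{q+1} f) - F(S_q f) = m_q \cdot \Delta_q f$ with
\[ m_q \assign \int_0^1 F'(S_q f + \tau \Delta_q f)\, \mathd \tau. \]
Since by definition $F'(f) \prec f = \sum_q S_{q-1} F'(f) \Delta_q f$, I obtain the identity
\[ R_F(f) = F(0) + \sum_{q \geq -1} \bigl[m_q - S_{q-1} F'(f)\bigr] \Delta_q f, \]
and the task reduces to showing this series converges in $\CC^{2\alpha}$.

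The key pointwise estimate is $\|m_q - S_{q-1} F'(f)\|_{L^\infty} \lesssim \|F\|_{C^2}(1 + \|f\|_{\CC^\alpha}) 2^{-q\alpha}$, which I would obtain by splitting $m_q - S_{q-1} F'(f) = [m_q - F'(S_q f)] + [F'(S_q f) - F'(f)] + [F'(f) - S_{q-1} F'(f)]$. The first bracket is $O(2^{-q\alpha})$ by Taylor together with $\|\Delta_q f\|_{L^\infty} \lesssim 2^{-q\alpha}\|f\|_{\CC^\alpha}$; the second is bounded by $\|F''\|_{L^\infty}\|f - S_q f\|_{L^\infty} \lesssim 2^{-q\alpha}$; and the third uses the composition bound $\|F'(f)\|_{\CC^\alpha} \lesssim \|F''\|_{L^\infty}\|f\|_{\CC^\alpha}$ (valid in the range $\alpha \in (0,1)$) with the standard low-frequency cutoff estimate. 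Multiplying by $\Delta_q f$ produces the desired quadratic gain $\|\cdot\|_{L^\infty} \lesssim (1 + \|f\|_{\CC^\alpha})^2 2^{-2q\alpha}$.

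The main obstacle is that $m_q$ is a nonlinear function of a low-frequency input and therefore not Fourier-localized, so the product $[m_q - S_{q-1} F'(f)] \Delta_q f$ cannot directly be fed into the series lemma. My remedy is to decompose $m_q - S_{q-1} F'(f)$ into low and high frequency relative to scale $2^q$: the low-frequency component times $\Delta_q f$ has Fourier support in an annulus at scale $2^q$ (handled by Lemma~\ref{lem: Besov regularity of series}, part 1), while the high-frequency remainder is expanded as $\sum_{k \geq q-1} \Delta_k m_q \cdot \Delta_q f$ with each piece Fourier-supported in a ball of radius $\lesssim 2^{k+1}$ and with $L^\infty$-norm $\lesssim 2^{-(k+q)\alpha}$, to which part 2 of the same lemma applies. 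Summing over $q$ and $k$ (the latter absolutely convergent thanks to $2\alpha > 0$) yields $R_F(f) \in \CC^{2\alpha}$ with the stated bound.

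For the Lipschitz estimate, I would apply the same decomposition to both $f$ and $\tilde f$ and expand
\[ R_F(f) - R_F(\tilde f) = \sum_q \bigl[(m_q(f) - m_q(\tilde f)) - S_{q-1}(F'(f) - F'(\tilde f))\bigr] \Delta_q f + \bigl[m_q(\tilde f) - S_{q-1} F'(\tilde f)\bigr] \Delta_q(f - \tilde f). \]
The first bracket gains an extra factor $\|f - \tilde f\|_{\CC^\alpha}$ via a second-order Taylor expansion, which is precisely where the hypothesis $F \in C^3$ enters; the second bracket is bounded by the previous argument applied to $\tilde f$, and $\|\Delta_q(f - \tilde f)\|_{L^\infty} \lesssim 2^{-q\alpha}\|f - \tilde f\|_{\CC^\alpha}$ furnishes the remaining factor. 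Combining yields the quadratic-growth Lipschitz bound. The only delicate part throughout is the Fourier-localization bookkeeping; the underlying pointwise estimates are routine.
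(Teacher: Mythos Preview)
The paper does not actually prove this theorem; it is stated in Appendix~\ref{appendix:besov} as a classical result of Bony and Meyer, so there is no in-paper proof to compare against. Your approach---telescoping $F(f)=F(0)+\sum_q(F(S_{q+1}f)-F(S_qf))$ followed by a first-order Taylor expansion---is indeed the standard Meyer-type route, and the pointwise bound $\|m_q-S_{q-1}F'(f)\|_{L^\infty}\lesssim 2^{-q\alpha}$ is correct.

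However, your treatment of the high-frequency remainder has a genuine gap. You write the pieces $\Delta_k m_q\cdot\Delta_q f$ (for $k\geq q-1$) with $L^\infty$-norm $\lesssim 2^{-(k+q)\alpha}$ and Fourier support in a ball of radius $\lesssim 2^{k+1}$, and then invoke part~2 of Lemma~\ref{lem: Besov regularity of series}. But that lemma requires a \emph{single-indexed} sequence $(u_j)$ with $\|u_j\|_{L^\infty}\lesssim 2^{-2j\alpha}$. Grouping by the support scale $j=k$ forces you to sum over $q\leq k+1$, and
\[
\sum_{q\leq k+1} 2^{-(k+q)\alpha}\;=\;2^{-k\alpha}\sum_{q\leq k+1}2^{-q\alpha}\;\lesssim\;2^{-k\alpha},
\]
which yields only $\CC^{\alpha}$, not $\CC^{2\alpha}$. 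Grouping instead by $q$ gives the right size $2^{-2q\alpha}$ but destroys the compact Fourier support (since $k$ ranges to infinity), so the lemma does not apply. The phrase ``absolutely convergent thanks to $2\alpha>0$'' does not rescue this: absolute convergence of the double series is not the issue, the \emph{localisation-versus-decay} bookkeeping is.

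The fix requires a sharper bound on $\|\Delta_k m_q\|_{L^\infty}$ for $k\gg q$, exploiting that $m_q$ is a $C^1$ function of an input whose spectrum lies in a ball of radius $\lesssim 2^q$. Concretely, $\|\nabla m_q\|_{L^\infty}\lesssim\|F''\|_{L^\infty}2^{q(1-\alpha)}$ gives $\|\Delta_k m_q\|_{L^\infty}\lesssim 2^{-k}2^{q(1-\alpha)}$, which after summation yields the correct $2^{-2j\alpha}$ decay when $\alpha\le 1/2$; for $\alpha>1/2$ one needs one more derivative on $F$ (this is consistent with the hypothesis $F\in C^3$ in the statement, even though the first estimate is phrased in terms of $\|F\|_{C^2}$). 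Your Lipschitz argument inherits the same issue. The overall strategy is right, but the Fourier-localisation step you flagged as ``the only delicate part'' is in fact not handled by the bound you wrote down.
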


The additional key ingredient at the core of the paracontrolled approach is
the following commutation result proved
in~{\cite{gubinelli_paracontrolled_2015}}, Lemma~2.4:

\begin{lemma}
  \label{lem:commutator} Assume that $\alpha, \beta, \gamma \in \mathbb{R}$
  are such that $\alpha + \beta + \gamma > 0$ and $\beta + \gamma < 0$. Then
  for $f, g, h \in C^{\infty}$ the trilinear operator
  \[ C (f, g, h) \assign ((f \prec g) \circ h) - f (g \circ h) \]
  allows for the bound
  \begin{equation}
    \label{eq:commutator bound} \|C (f, g, h)\|_{\CC^{\beta + \gamma}}
    \lesssim \|f\|_{\CC^{\alpha}} \|g\|_{\CC^{\beta}} \|h\|_{\CC^{\gamma}},
  \end{equation}
  and can thus be uniquely extended to a bounded trilinear operator
  \[ C : \CC^{\alpha} \hspace{-0.17em} \times \hspace{-0.17em} \CC^{\beta}
     \hspace{-0.17em} \times \hspace{-0.17em} \CC^{\alpha} \rightarrow
     \CC^{\beta + \gamma} . \]
\end{lemma}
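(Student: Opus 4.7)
The plan is to expand $C(f,g,h)$ in Littlewood--Paley blocks so that a gain of $2^{-\alpha i}$ appears from the regularity of $f$, and then sum the resulting frequency-localized pieces using Lemma~\ref{lem: Besov regularity of series}. By the definition of $\prec$ one has $f \prec g = \sum_{k} S_{k-1} f\, \Delta_k g$, and the Fourier support of $S_{k-1} f\, \Delta_k g$ is contained in $2^k \CA'$ for a fixed annulus $\CA'$. Hence for each $i$ only indices $k \sim i$ contribute to $\Delta_i(f\prec g)$. On the other hand, writing $g\circ h = \sum_{i'\sim j}\Delta_{i'} g\,\Delta_j h$, one obtains
\[
C(f,g,h) = \sum_{i\sim j}\bigl[\Delta_i(f\prec g) - f\,\Delta_i g\bigr]\,\Delta_j h + \text{reshuffling terms},
\]
where the reshuffling accounts for the difference between summing $\Delta_i(f\prec g)\Delta_j h$ over $i \sim j$ and summing $f\,\Delta_{i'} g\,\Delta_j h$ over $i'\sim j$; after renaming indices the two index sets coincide modulo a bounded number of shifts, which produces lower-order contributions of the same type.

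Next I would extract the commutator. Using $\int K_{i}(x-y)\mathd y = 0$ for $i\geqslant 0$ (and handling $i=-1$ separately as a finite correction), one writes
\[
\Delta_i(f\prec g)(x) - f(x)\Delta_i g(x) = \sum_{k\sim i}\int K_{i}(x-y)\bigl[S_{k-1}f(y) - f(x)\bigr]\Delta_k g(y)\,\mathd y + r_i(x),
\]
where $r_i$ collects the error from replacing $\sum_{k\sim i}S_{k-1}f(x)\Delta_k g(y)$ by $f(x)\Delta_i g(x)$; this error is a sum of Littlewood--Paley blocks of $g$ at scale $2^i$ multiplied by terms of the form $f - S_{k-1}f$, hence automatically controlled by $\|f\|_{\CC^\alpha}\|g\|_{\CC^\beta}2^{-(\alpha+\beta)i}$. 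For the main term, the estimate $|S_{k-1}f(y) - f(x)| \lesssim \|f\|_{\CC^\alpha}(2^{-i} + |x-y|)^{\alpha}$ for $\alpha\in(0,1)$, together with $\|\Delta_k g\|_{L^\infty}\lesssim 2^{-\beta k}\|g\|_{\CC^\beta}$ and the scale $2^{-i}$ of $K_i$, yields a pointwise bound of order $2^{-(\alpha+\beta)i}\|f\|_{\CC^\alpha}\|g\|_{\CC^\beta}$.

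Multiplying by $\|\Delta_j h\|_{L^\infty}\lesssim 2^{-\gamma j}\|h\|_{\CC^\gamma}$ and using that each summand in $C(f,g,h) = \sum_{j}\sum_{i\sim j}(\ldots)\Delta_j h$ is Fourier-supported in a ball of radius $2^j$ (this uses property (2) of the dyadic partition of unity together with the spectral support of $\Delta_i(f\prec g)$), Lemma~\ref{lem: Besov regularity of series}(2) gives
\[
\|C(f,g,h)\|_{\CC^{\alpha+\beta+\gamma}} \lesssim \sup_{j} 2^{(\alpha+\beta+\gamma)j}\cdot 2^{-(\alpha+\beta+\gamma)j}\|f\|_{\CC^\alpha}\|g\|_{\CC^\beta}\|h\|_{\CC^\gamma},
\]
which is better than what is stated; in particular the assumption $\beta+\gamma<0$ is only needed to guarantee that $f(g\circ h)$ is itself a well-defined distribution bounded in $\CC^{\beta+\gamma}$ (via \eqref{eq:para-3} and multiplication of $\CC^\alpha\cdot\CC^{\beta+\gamma}$), so that the subtraction defining $C$ makes sense before extending it by density.

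The main technical obstacle is the assumption $\alpha\in\mathbb{R}$, not just $\alpha\in(0,1)$: for $\alpha\leqslant 0$ or $\alpha\geqslant 1$ the pointwise H\"older estimate on $S_{k-1}f(y)-f(x)$ must be replaced by a Littlewood--Paley decomposition $f = \sum_\ell \Delta_\ell f$, splitting $\ell\leqslant i$ (where one uses Bernstein's inequality to get $\|\nabla S_{k-1}f\|_{L^\infty}\lesssim 2^{(1-\alpha)i}\|f\|_{\CC^\alpha}$ and a first-order Taylor argument) and $\ell>i$ (where one simply bounds $\|\Delta_\ell f\|_{L^\infty}\lesssim 2^{-\alpha \ell}\|f\|_{\CC^\alpha}$ and sums geometrically). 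Carrying out this decomposition and re-summing carefully gives the claimed bound under the sole assumption $\alpha+\beta+\gamma>0$. The extension by density from $f,g,h\in C^\infty$ to $f\in\CC^\alpha,g\in\CC^\beta,h\in\CC^\gamma$ is then standard, using trilinearity and the bound just established.
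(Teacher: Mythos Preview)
The paper does not give a proof of this lemma: it is quoted from~\cite{gubinelli_paracontrolled_2015}, Lemma~2.4, and used as a black box. Your sketch follows essentially the same strategy as the proof in that reference---decompose the commutator in Littlewood--Paley blocks, extract a gain of $2^{-\alpha i}$ from the increment of $f$, and resum.

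Two comments on the sketch itself. First, your worry about $\alpha\leqslant 0$ is unnecessary: the hypotheses $\alpha+\beta+\gamma>0$ and $\beta+\gamma<0$ force $\alpha>0$, so only the case $\alpha\geqslant 1$ requires the extra care you describe. Second, and more substantively, the Fourier-support claim you use to invoke Lemma~\ref{lem: Besov regularity of series}(2) is not correct as written: the term $f\,\Delta_i g\,\Delta_j h$ is \emph{not} supported in a ball of radius $\sim 2^j$, because $f$ carries all frequencies. The standard fix (and this is what~\cite{gubinelli_paracontrolled_2015} does) is to split $f=S_{j-1}f+(f-S_{j-1}f)$. The piece $[\Delta_i(f\prec g)-S_{j-1}f\,\Delta_i g]\Delta_j h$ is then genuinely supported in a ball $2^j\CB$ and your argument goes through; the remaining piece $(f-S_{j-1}f)\,\Delta_i g\,\Delta_j h$ is bounded in $L^\infty$ by $2^{-(\alpha+\beta+\gamma)j}$ (using $\alpha>0$) and its Littlewood--Paley blocks can be estimated directly. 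With this correction the argument is complete and yields the stronger conclusion $C(f,g,h)\in\CC^{\alpha+\beta+\gamma}$, as you observe.
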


We will need the following two lemmas to compare standard and time-smoothed
paraproducts. The first one has essentially the same proof as
{\cite{gubinelli_paracontrolled_2015}}, Lemma~5.1.

\begin{lemma}
  \label{l:comm-paraproduct} Let $\rho \in (0, 2)$, $\gamma \in \mathbbm{R}$.
  Then for every $\varepsilon > 0$ we have the bound
  \[ \| g \prec h - g \precprec h \|_{\CC^{\rho + \gamma - \varepsilon}_T}
     \lesssim \| g \|_{C_T^{\rho / 2} \CC^0} \| h \|_{\CC^{\gamma}_T} . \]
\end{lemma}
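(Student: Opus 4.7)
The plan is to express both paraproducts using the kernel representations from Section~\ref{s:nonlinear} and reduce the estimate to a single sum over frequency blocks, exploiting the time-Hölder regularity of $g$ together with the parabolic scaling of $Q_{i,t}$ (time scale $2^{-2i}$ matched to spatial scale $2^{-i}$). Specialising the definition of $\Pi_{\precprec}$ to an $h$ independent of the extra parameter, we have
\[ (g \prec h)(t,x) = \sum_i \int_y P_{i,x}(y) g(t,y)\,dy \cdot \Delta_i h(t,x), \]
\[ (g \precprec h)(t,x) = \sum_i \int_{y,s} Q_{i,t}(s) P_{i,x}(y) g(s,y)\,dy\,ds \cdot \Delta_i h(t,x). \]
Using that $\int_s Q_{i,t}(s)\,ds = 1$, the $i$-th term of the difference becomes $A_i(t,x)\cdot \Delta_i h(t,x)$ where
\[ A_i(t,x) \assign \int_{y,s} Q_{i,t}(s) P_{i,x}(y) [g(t,y)-g(s,y)]\,dy\,ds. \]

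The key estimate is then immediate. Since $Q_{i,t}(\cdot)$ is supported on $|t-s|\lesssim 2^{-2i}$ and $g\in C_T^{\rho/2}\CC^0$, one has $|g(t,y)-g(s,y)|\lesssim \|g\|_{C_T^{\rho/2}\CC^0}\,2^{-\rho i}$ on the support, and because $P_{i,x}$, $Q_{i,t}$ have total mass $O(1)$ this yields $\|A_i\|_{C_T L^\infty}\lesssim 2^{-\rho i}\|g\|_{C_T^{\rho/2}\CC^0}$. Combined with the standard dyadic bound $\|\Delta_i h\|_{C_T L^\infty}\lesssim 2^{-\gamma i}\|h\|_{\CC^\gamma_T}$, each term of the sum is bounded in $L^\infty$ by $2^{-(\rho+\gamma)i}\|g\|_{C_T^{\rho/2}\CC^0}\|h\|_{\CC^\gamma_T}$. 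The convention that $g(t)=g(0)$ for $t\leq 0$ preserves the $C^{\rho/2}$ seminorm since $\rho/2\in(0,1)$, so no boundary issue arises in the $s$-integration.

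To conclude I would observe that $A_i(t,\cdot)$ has Fourier support in a ball of radius $\sim 2^i$ (inherited from $P_{i,x}$), while $\widehat{\Delta_i h}(t,\cdot)$ lives in an annulus of size $\sim 2^i$. By property~(2) of the dyadic partition of unity, the product $A_i\cdot \Delta_i h$ has spectrum in an annulus of size $\sim 2^i$, so Lemma~\ref{lem: Besov regularity of series}(1) applied to the sum in $i$ gives $\CC^{\rho+\gamma}_T$-regularity, which is in fact stronger than the claimed $\CC^{\rho+\gamma-\varepsilon}_T$-bound.

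I do not anticipate any substantive obstacle: the argument is a clean application of parabolic scaling plus the frequency-support observation, and the $-\varepsilon$ in the statement appears to be a conservative margin (consistent with the other commutation lemmas in the paper) that can be dropped here. The only mild subtlety is to verify rigorously that the extension convention on $g$ and the smoothing kernel $Q_{i,t}$ interact correctly near $t=0$, but this is handled by the norm-preserving property of the constant extension.
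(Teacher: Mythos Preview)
Your argument is correct and is exactly the standard one the paper defers to (it cites \cite{gubinelli_paracontrolled_2015}, Lemma~5.1, rather than giving its own proof): write the difference as $\sum_i A_i\,\Delta_i h$ with $A_i$ the time-averaged increment of $S_{i-1}g$, use the parabolic scaling of $Q_{i,t}$ to gain $2^{-\rho i}$, and conclude via the annular Fourier support of $S_{i-1}g\cdot\Delta_i h$.

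One small correction to your closing remark: the $-\varepsilon$ is not merely a conservative margin here. The hypothesis controls $g(t,\cdot)-g(s,\cdot)$ only in $\CC^0=B^0_{\infty,\infty}$, not in $L^\infty$, and the low-pass operator $S_{i-1}$ is not uniformly bounded from $\CC^0$ to $L^\infty$; one has at best $\|S_{i-1}f\|_{L^\infty}\lesssim i\,\|f\|_{\CC^0}$ (or $2^{\varepsilon i}\|f\|_{\CC^0}$). This logarithmic loss is precisely what the $-\varepsilon$ absorbs. Your claim that ``$P_{i,x}$ has total mass $O(1)$'' conflates $\int P_{i,x}=1$ with $\int|P_{i,x}|$, and the latter is only $O(1)$ for special choices of the partition (e.g.\ when $\rho(\xi)=\chi(\xi/2)-\chi(\xi)$). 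With the generic partition assumed in the paper, or if one wants the bound in terms of $\|g\|_{C_T^{\rho/2}\CC^0}$ as stated, the $-\varepsilon$ should be kept. Compare Lemma~\ref{l:interpol}, where the same $\CC^0$ vs.\ $L^\infty$ distinction forces an analogous loss.
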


The second lemma has a standard proof.

\begin{lemma}
  \label{l:comm-nonlin-para}Let $g \in \LL^{\rho}_T$, $h \in C_{\eta}^1
  \CC^{\gamma}_T$ with $\rho \in (0, 1)$, $\gamma \in \mathbbm{R}$. We have,
  $\forall \varepsilon > 0$
  \[ \| \Pi_{\prec} (g, h) - \Pi_{\precprec} (g, h) \|_{\CC^{\rho + \gamma -
     \varepsilon}_T} \lesssim \| g \|_{\LL^{\rho}_T} \| h \|_{C_{\eta}^1
     \CC^{\gamma}_T} . \]
\end{lemma}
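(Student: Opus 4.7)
The plan is to exploit the fact that $\int_s Q_{i,t}(s)\,\mathd s = 1$, which lets us rewrite $\Pi_{\prec}(g,h)$ in exactly the same form as $\Pi_{\precprec}(g,h)$ but with $g(s,y)$ replaced by $g(t,y)$. Subtracting then gives a single convenient sum
\[
(\Pi_{\prec}(g,h) - \Pi_{\precprec}(g,h))(t,x) = \sum_i \int_{y,s} Q_{i,t}(s) P_{i,x}(y)\bigl[\Delta_i h(g(t,y),t,\cdot)(x) - \Delta_i h(g(s,y),t,\cdot)(x)\bigr],
\]
in which the inner bracket vanishes as soon as the time-smoothing does not move the argument.

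I would then apply the mean value theorem in the $\eta$-argument of $h$ to extract a factor $g(t,y) - g(s,y)$ multiplied by $\partial_\eta \Delta_i h$ evaluated at an intermediate point in $[\lambda,1]$. The key observation is that $Q_{i,t}(s)$ is essentially supported on $|t-s| \lesssim 2^{-2i}$, so the time-H\"older component $g \in C_T^{\rho/2}\CC^0$ of $\LL^{\rho}_T$ provides $|g(t,y) - g(s,y)| \lesssim |t-s|^{\rho/2}\|g\|_{\LL^{\rho}_T} \lesssim 2^{-\rho i}\|g\|_{\LL^{\rho}_T}$. Combined with the standard Bernstein-type bound $\|\Delta_i \partial_\eta h(\cdummy,t,\cdummy)\|_{L^{\infty}} \lesssim 2^{-\gamma i}\|h\|_{C_{\eta}^1 \CC^{\gamma}_T}$ and the uniform $L^1$ mass of $P_{i,x}$ and $Q_{i,t}$, this yields a pointwise bound on the $i$-th summand of order $2^{-(\rho+\gamma)i}\|g\|_{\LL^{\rho}_T}\|h\|_{C_{\eta}^1\CC^{\gamma}_T}$.

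To conclude the Besov regularity, I would invoke Lemma~\ref{lem: Besov regularity of series}: each summand has $x$-Fourier support in an annulus of size $2^i$, since the $x$-dependence enters only through $\Delta_i h(\cdummy, t, \cdummy)(x)$, which is spectrally localised at scale $2^i$. A sequence of smooth pieces satisfying this support condition with $L^{\infty}$-norm $\lesssim 2^{-(\rho+\gamma)i}$ then sums to an element of $\CC^{\rho+\gamma}_T$, which easily absorbs the $\varepsilon$ loss in the statement (the $\varepsilon$ simply provides slack to handle the borderline case $\rho+\gamma \in \mathbb{N}$ in exactly the same fashion as in Lemma~\ref{l:para-increment}).

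The main technical point is the conversion of the time-regularity gain $|t-s|^{\rho/2}$ into a spatial-frequency gain $2^{-\rho i}$ via the parabolic scaling of $Q_{i,t}$. This is the same mechanism underlying Lemma~5.1 of \cite{gubinelli_paracontrolled_2015}, which compares the standard and time-smoothed paraproducts in the bilinear setting; here it adapts straightforwardly thanks to the spectral support observation already used in the proof of Lemma~\ref{l:nonlinear-est}, and no further ingredient is needed.
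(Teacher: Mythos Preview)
Your approach is correct and is precisely the standard argument the paper has in mind (the paper omits the proof as ``standard''). Two small points of precision are worth noting. First, the pointwise bound $|g(t,y)-g(s,y)|\lesssim |t-s|^{\rho/2}\|g\|_{\LL^{\rho}_T}$ is not quite available as stated, since the time-H\"older part of $\LL^{\rho}_T$ takes values in $\CC^0=B^0_{\infty,\infty}$, which does not control $L^\infty$; this is exactly where the $\varepsilon$ enters, via Lemma~\ref{l:interpol}, which gives $\|g\|_{C_T^{(\rho-\varepsilon)/2}L^\infty}\lesssim\|g\|_{\LL^\rho_T}$ (compare the identical manoeuvre in the proof of Lemma~\ref{l:para-increment}). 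Second, the $x$-dependence of the $i$-th summand does \emph{not} enter only through $\Delta_i h$: the kernel $P_{i,x}(y)=K_{<i-1}(x-y)$ also depends on $x$. The annular Fourier support still holds, but by the usual paraproduct mechanism (low-frequency factor times annular factor), not for the reason you state. With these two adjustments your argument goes through verbatim.
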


\end{document}